\renewcommand{\correc}[1]{#1}
\newtheorem{remark}[theorem]{Remark}
\newtheorem{assumption}[theorem]{Assumption}
\newcommand{\qedhere}{} 
\title{Convergence of Entropic Schemes for \\ Optimal Transport and Gradient Flows}
\author{%
Guillaume Carlier\footnote{CEREMADE, Universit\'e Paris-Dauphine, Place du Marechal De Lattre De Tassigny, 75775 Paris 16, FRANCE, \url{{carlier,schmitzer}@ceremade.dauphine.fr} }, \quad
Vincent Duval\footnote{INRIA, MOKAPLAN, Domaine de Voluceau Le Chesnay, FRANCE, \url{vincent.duval@inria.fr}}, \\ %
Gabriel Peyr\'e\footnote{DMA, Ecole Normale Sup\'erieure, 45 rue d'Ulm, F-75230 PARIS cedex 05, FRANCE, \url{gabriel.peyre@ens.fr} }, \quad %
Bernhard Schmitzer$^*$}
\date{\today}
\begin{document}

\maketitle


\begin{abstract}
	Replacing positivity constraints by an entropy barrier is popular to approximate solutions of linear programs. 
	In the special case of the optimal transport problem, this technique dates back to the early work of Schr\"odinger. 
	This approach has recently been used successfully to solve optimal transport related problems in several applied fields such as imaging sciences, machine learning and social sciences.
	The main reason for this success is that, in contrast to linear programming solvers, the resulting algorithms are highly parallelizable and take advantage of the geometry of the computational grid (e.g.\ an image or a triangulated mesh). 
	The first contribution of this article is the proof of the $\Gamma$-convergence of the entropic regularized optimal transport problem towards the Monge-Kantorovich problem for the squared Euclidean norm cost function. 
	This implies in particular the convergence of the optimal entropic regularized transport plan towards an optimal transport plan as the entropy vanishes. 
	Optimal transport distances are also useful to define gradient flows as a limit of implicit Euler steps according to the transportation distance.
	Our second contribution is a proof that implicit steps according to the entropic regularized distance converge towards the original gradient flow when both the step size and the entropic penalty vanish (in some controlled way).
\end{abstract}



\section{Introduction}

\subsection{Motivation}

Optimal transport (OT) offers an elegant solution to many theoretical and practical problems at the interface between probability, partial differential equations and optimization, as highlighted in the monograph of Villani~\cite{Villani03}. This success however comes at a high computational price, since the Kantorovich formulation of OT requires the solution of a linear program over distributions on a product space. Other problems related to OT, such as the computation of OT barycenters~\cite{Carlier_wasserstein_barycenter} and gradient flows~\cite{JKO1998} are even more challenging. 

It is thus of primary interest to find proxies for OT distances, that can provably approximate faithfully the true distance and  transport plan, while offering a better computational complexity than traditional linear programming solvers such as simplex methods~\cite{Burkard09} or interior points methods~\cite{nesterov1994interior}. This article explores the use of an entropic smoothing of the initial linear program, that was proposed initially by Schr\"odinger~\cite{Schroedinger31} (see~\cite{RuschendorfThomsen,LeonardSchroedinger}), and that has recently been revitalized in fields as diverse as machine learning~\cite{CuturiSinkhorn}, computer graphics~\cite{ConvolutionalOT} and social sciences~\cite{Galichon-Entropic}.


\if 0
Gradient flow on manifold $X$:
\begin{equation}
	\label{eq:GradientFlow}
	\frac{d x}{dt} = - \nabla E(x)
\end{equation}
Time discrete version, implicit first order scheme (generalizes to metric spaces):
\begin{equation}
	\label{eq:GradientFlowDiscrete}
	\iterate{x}{k+1} \in \argmin_{x} \left( \frac{1}{2\,\tau} d(\iterate{x}{k},x) + E(x) \right)
\end{equation}
Convergence of \eqref{eq:GradientFlowDiscrete} to \eqref{eq:GradientFlow} as $\tau \rightarrow 0$ in suitable sense? \cite{AmbrosioGradientFlows2008}

Here $X$: set of absolutely continuous measures on $\R^n$ with bounded second moments, metrized by 2-Wasserstein distance. Seminal paper \cite{JKO1998}: transition \eqref{eq:GradientFlowDiscrete} to \eqref{eq:GradientFlow} allows for example to interpret diffusion equation on densities as gradient flow in 2-Wasserstein space, for right choice of $E$ (in this particular case: entropy).

But as always: Optimal transport computationally expensive. Therefore entropy regularization as remedy: matrix scaling algorithms. Convergence? First study convergence of distance itself under entropy regularization, then proceed to study convergence of time discrete gradient descent scheme\footnote{\bernhard{This is not in any way meant as a `final' proposition for the intro, I just wanted to establish some minimal context, which makes it simpler to write in later sections.}}
\fi

\subsection{Related Work}

\paragraph{Entropic regularization of OT}

The use of entropy as a barrier function for positivity constraints has a long tradition in the linear programming literature, see for instance~\cite{CominettiAsympt}. In the specific case of the Kantorovich linear program, this corresponds to Schr\"odinger's problem~\cite{Schroedinger31}, which equivalently reads as the projection of a Gibbs distribution on fixed marginal constraints according to the Kullback-Leibler (KL) divergence, see~\cite{RuschendorfThomsen,LeonardSchroedinger}.

A major interest of this regularized formulation is not really the fact that the entropy is a very good barrier (in the sense of interior points algorithms~\cite{nesterov1994interior}), but rather that its algebraic properties makes it the unique possible choice when it comes to solving iteratively the regularized problem using alternating KL projections~\cite{bregman1967relaxation}. Indeed, projections on the row and column marginal constraints are obtained in closed form by a simple diagonal scaling. These iterative projections exactly correspond to the celebrated Sinkhorn's algorithm~\cite{Sinkhorn64,SinkhornKnopp67,Sinkhorn67}, that was proposed initially under the name of ``IPFP''~\cite{DemingStephanIPFP}. 

The simplicity of this method, as well as the ability to do parallel implementations, make it the method of choice for machine learning applications~\cite{CuturiSinkhorn} where one needs to compute many optimal transports between a large number of input densities. Furthermore, on geometric domains (such as translation-invariant grids or Riemannian manifolds) it is possible to compute in nearly linear time (with respect to the number of sampling points) multiplications by the Gibbs kernel, which makes it even more attractive for applications in imaging and graphics~\cite{ConvolutionalOT}. 

Beyond the initial Kantorovich problem, many more ``transport-like'' linear programs can be tackled by the same approach. One of the most appealing is the computation of Wasserstein barycenters (as defined in~\cite{Carlier_wasserstein_barycenter}), we refer to~\cite{ConvolutionalOT} for an extensive set of illustrations in 2-D, 3-D and on manifolds. 
Generalized transportation problems (such as partial transport) can also be solved this way (see~\cite{BregmanProj2015}), at the expense of replacing the initial Bregman iterative projection method~\cite{bregman1967relaxation} by Dykstra's algorithm (initially defined in~\cite{Dykstra83}) with Bregman divergences~\cite{CensorReich-Dykstra,bauschke-lewis,BregmanCensorReich-Dykstra}.

\paragraph{Entropic regularization of Wasserstein Gradient flows}

It is possible to use the formal Riemannian structure of the Wasserstein space to define gradient flows of energy functionals. This was initiated in the seminal work of Jordan, Kinderlehrer and Otto \cite{JKO1998} who showed that the Fokker-Planck equation can be viewed as the gradient flow of the entropy relative to the equilibrium measure.  The analysis of \cite{JKO1998} strongly relies on computing implicit discrete time-stepping according to the Wasserstein distance (see Section~\ref{sec:JKO-Setup} for the definition of this scheme). 
The time-continuous evolution is obtained by taking the limit of small time-step sizes.
This general theory  of gradient flows in metric spaces has developed very rapidly in recent years, as detailed in the reference textbook~\cite{AmbrosioGradientFlows2005}.
%
%
Since the initial work of \cite{JKO1998}, many other PDEs have been derived as Wasserstein gradient flows for well chosen functionals,  such as the heat equation on manifolds~\cite{ErbarHeatManifold}, the porous medium equation~\cite{otto2001geometry}, more general degenerate parabolic PDEs~\cite{agueh2002existence}, the Keller-Segel equation~\cite{blanchet2008convergence}, higher order PDEs~\cite{GianazzaARMA} (see also~\cite{Burger-JKO} for applications in imaging) and crowd motion~\cite{maury2010macroscopic}.

This implicit formulation has many advantages on the theoretical side. Most notably, it allows to prove existence of solutions to some highly non-linear PDEs, and also to give meaning to minimizing flows of non-smooth functionals (see for instance~\cite{maury2010macroscopic}). On the computational side, the advantages are less clear, since each step requires the solution of a convex program. There is however a recent wave of activity on finding suitable discretizations and numerical solvers that can be both fast and stable. 
Put aside the specific case of 1-D methods~\cite{kinderlehrer1999approximation,blanchet2008convergence,blanchet2012optimal,agueh2013one,Matthes1D}, these methods can be classified as either Eulerian methods with finite difference or finite volumes~\cite{burger2010mixed,CarrilloFiniteVolume} or Lagrangian methods \cite{carrillo2009numerical,Westdickenberg2010,BuddMoving,JDB-JKO}.

We consider here the numerical scheme proposed in~\cite{Peyre-JKO}, that performs an Eulerian discretization of the JKO time-stepping, where one replaces the usual Wasserstein distance by its entropy smoothed approximation. This method leverages the reformulation of this smooth optimization problem as a KL projection, and makes use of Dykstra's algorithm to obtain a fast numerical scheme that scales to large 2-D grids and meshed surfaces. The price to pay is the presence of an extra diffusion that is created by the entropic smoothing. It is the purpose of the present paper to study the discrete gradient flow defined by this scheme, and in particular to analyze the impact of this additional diffusion.


\subsection{Contribution}

Our first contribution (\thref{thm:OTGammaConvergence}) is the proof of $\Gamma$-convergence of the entropy regularized OT functional to the unregularized variant in the limit of vanishing regularization, for the case of the squared Euclidean distance cost. 
This contribution is closely related to the work of Christian L\'eonard~\cite{LeonardSchroedingerMK2012}, that proves $\Gamma$-convergence of various functionals related to this entropic smoothing. 
While the work of L\'eonard is much more general (it covers more general cost functions on more general base spaces), our approach has the advantage of being self-contained, with a short and direct proof that leverages the geometry of the Euclidean cost function. We also show how to extend this convergence result to cover more general variational problems, such as the computation of barycenters (Proposition~\ref{prop-barycenters}). 

Our second contribution (\thref{thm:JKOConvergence}) is the proof of the convergence of the discrete entropic smoothing of the JKO flow, when both the step size and the smoothing strength tend to zero in a suitable joint limit. The smoothing strength must approach zero sufficiently faster than the step size, which corresponds to the requirement that the extra diffusivity of the scheme should vanish in the limit. 
This contribution bears some similarities with~\cite{AdamsCMP} that  studies a similar gradient flow related to the Schr\"odinger problem and \correc{with~\cite{BraidesGamma2014} that discusses $\Gamma$-convergence results in the framework of minimizing movements}.

\subsection{Notation}
Denote by $\probAll(\R^n)$ the Borel probability measures on $\R^n$, by $\prob(\R^n) \subset \probAll(\R^n)$ those with finite second moments (see Eq.~\eqref{eq:Moment}), by $\probacAll(\R^n) \subset \probAll(\R^n)$ those that are absolutely continuous w.r.t.\ the Lebesgue measure (unless otherwise stated, absolute continuity will always be relative to the Lebesgue measure) and by $\probac(\R^n) = \prob(\R^n) \cap \probacAll(\R^n)$ their intersection.
By $\Pi(\mu,\nu) \subset \prob(\R^n \times \R^n)$ we denote the set of transport plans between measures $\mu$, $\nu \in \prob(\R^n)$. This is the set of measures in $\prob(\R^n \times \R^n)$ with $\mu$ and $\nu$ as first and second marginals respectively. By $\PiAc(\mu,\nu) \subset \probac(\R^n \times \R^n)$ denote the set of absolutely continuous transport plans for $\mu$, $\nu \in \probac(\R^n)$. By abuse of notation, for absolutely continuous measures we use the same symbol for the measure and its  density with respect to Lebesgue's measure.

For $R>0$ denote by $B_R$ the open ball in $\R^n$ of radius $R$ centered at the origin.

For a sufficiently smooth function $\phi : \R^n \rightarrow \R$ we denote by $\nabla \phi$, $\Delta \phi$ and $\nabla^2 \phi$ its gradient, Laplacian and Hessian. For a differentiable $\phi : \R^n \rightarrow \R^n$ we denote by $D \phi$ its Jacobian matrix.

\begin{definition}[Some Norms]
	For a vector space $X$ with norm $|\cdot|$ and $\phi \in C(\R^n,X)$, $\psi \in C([a,b] \times \R^n,X)$, $a<b$, denote
	\begin{align*}
		\|\phi\|_\infty & = \sup_{x \in \R^n} |\phi(x)|\,,&
		\|\psi\|_{\infty,\infty} & = \sup_{(t,x) \in [a,b] \times \R^n} |\psi(t,x)|\,.
	\end{align*}
	The relevant interval $[a,b]$ should be clear from the context.
	In this article we consider vector valued functions $X=\R^n$ with the Euclidean norm and matrix valued functions $X= \R^n \times \R^n$ with the spectral norm.
\end{definition}

Let us briefly comment on the topologies and notions of convergence that we use in this article.
\begin{definition}[Topologies and Convergence]
	\label{def:Topologies}
	The \emph{narrow} topology (sometimes also called weak topology) on $\probAll(\R^n)$ is induced by integration against the set $C_b(\R^n)$ of bounded continuous test functions.
	
	A subset $S \subset \probAll(\R^n)$ is called \emph{tight} if for any $\veps > 0$ there is a compact set $K \subset \R^n$ such that $\mu(\R^n \setminus K) \leq \veps$ for all $\mu \in S$. According to \emph{Prokhorov's theorem}, a subset of $\probAll(\R^n)$ is pre-compact for the narrow topology if and only if it is tight (cf.~\cite[Thm.~5.1.3]{AmbrosioGradientFlows2005}).
	
	$\probacAll(\R^n)$ is not narrowly closed: sequences in $\probacAll(\R^n)$ may converge narrowly to Dirac measures $\notin \probacAll(\R^n)$. Therefore we also view $\probacAll(\R^n)$ as a subset of $L^1(\R^n)$ and equip it with the corresponding $L^1$-weak topology. Note that $L^1$-weak convergence implies narrow convergence. To establish convergence results for non-linear PDEs we also take into account the strong $L^1$-topology.
	
	Finally, there is also the topology induced by the Wasserstein distance. The following three are equivalent \cite[Def.~6.8, Thm.~6.9]{Villani-OptimalTransport-09}:
	\begin{itemize}
		\item convergence in the Wasserstein distance,
		\item narrow convergence + convergence of finite second moments,
		\item convergence of integration against all test functions that grow at most as $x \mapsto |x|^2$ at infinity.
	\end{itemize}
\end{definition}



\section{Entropy Regularized Optimal Transport}
\label{sec:OT}

In this section we are investigating the modification of the standard 2-Wasserstein distance on $\prob(\R^n)$ by adding an entropy regularization term on the transport plan.

\subsection{Set-up}
\label{sec:OT-Setup}

\begin{definition}[Entropy]
\label{def:Entropy}
For $\mu \in \probAll(\R^m)$ we define the (negative) entropy
\begin{align}
	\label{eq:Entropy}
	H_{\R^m}(\mu) & \eqdef
		\begin{cases}
			\int_{\R^m} \mu(x)\,\log(\mu(x))\,dx & \text{for } \mu \in \probacAll(\R^m)\,, \\
			+ \infty & \text{otherwise.}
		\end{cases}
\end{align}
For $\mu \in \probacAll(\R^m)$ we will also require the `negative' and `positive' part of the entropy:
\begin{align*}
	\HRkneg{m}(\mu) \eqdef \int_{\R^m} | \min\{\mu(x)\,\log(\mu(x)),0\} |\,dx\,,\,\,
	\HRkpos{m}(\mu) \eqdef \int_{\R^m} \max\{\mu(x)\,\log(\mu(x)),0\}\,dx\,.
\end{align*}
We simply write $H_1$, $\Hneg$, $\Hpos$ for $H_{\R^n}$, $\HRkneg{n}$, $\HRkpos{n}$ and analogous for 2 and $\R^n \times \R^n$.
By virtue of \thref{cor:fctLSC} the functions $H_1$ and $H_2$ are l.s.c.~w.r.t.~the narrow topology under bounded second moments.
\end{definition}

\noindent We will use many estimates involving the second moment of a measure $\mu \in \probAll(\R^m)$:
\begin{align}
	\label{eq:Moment}
	M_{\R^m}(\mu) & \eqdef \int_{\R^m} |x|^2 d\mu(x).
\end{align}
The set of measures with bounded second moments is given by $\prob(\R^m) = \{ \mu \in \probAll(\R^m) : M_{\R^m}(\mu) < \infty \}$. We often just write $M(\mu) = M_{\R^n}(\mu)$ for $\mu \in \probAll(\R^n)$.

\begin{definition}[2-Wasserstein distance on $\prob(\R^n)$]
Let $c : \R^n \times \R^n \rightarrow \R$, $c(x,y) = |x-y|^2$. For $\gamma \in \prob(\R^n \times \R^n)$ we write
\begin{align}
	(c,\gamma) & \eqdef \int c(x,y)\,d\gamma(x,y) = \int |x-y|^2\,d\gamma(x,y)\,.
\intertext{The 2-Wasserstein distance on $\prob(\R^n)$ is then given by}
	\label{eq:W2}
	W^2(\mu,\nu) & \eqdef \inf_{\gamma \in \Pi(\mu,\nu)} (c,\gamma).
\intertext{It is well known that $W$ defines a distance on $\prob(\R^n)$ \cite{Villani03}.
In this article we consider an entropy regularized variant of optimization problem \eqref{eq:W2}:}
	\label{eq:W2Entropy}
	W^2_\veps(\mu,\nu) & \eqdef \inf_{\gamma \in \Pi(\mu,\nu)} \left( \vphantom{\sum} (c,\gamma) + \veps\,H_2(\gamma) \right)
\end{align}
where $\veps>0$ is the regularization parameter.
\end{definition}
We will investigate how $W^2_\veps$ can be used to approximate $W^2$. We start by establishing $\Gamma$-convergence of the entropy regularized version towards the unregularized functional as regularization decreases. In Section \ref{sec:JKO} we then investigate replacing $W^2$ by $W^2_\veps$ in a time-discrete gradient flow scheme.

Note that from the inequality $|y|^2 \leq 2|x|^2 + 2|x-y|^2$ for $x$, $y \in \R^n$ we find immediately
\begin{align}
	\label{eq:W2MomentBound}
	M(\nu) \leq 2\,M(\mu) + 2\,W^2(\mu,\nu)
\end{align}
for $\mu$, $\nu \in \prob(\R^n)$.

\subsection{Preliminary Results}

We first show some simple properties of $W^2_\veps$.

\begin{proposition}[Existence of Minimizing Couplings for $W^2_\veps(\mu,\nu)$]
\thlabel{thm:W2EpsExistence}
Let $\mu, \nu \in \probac(\R^n)$ such that $H_1(\mu)$, $H_1(\nu) < \infty$. Then there is a $\gamma \in \PiAc(\mu,\nu)$ with $H_2(\gamma) < \infty$ that attains the infimum in \eqref{eq:W2Entropy}.
\end{proposition}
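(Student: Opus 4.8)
The plan is to establish existence via the direct method of the calculus of variations, working in the set $\PiAc(\mu,\nu)$. First I would check that the infimum in \eqref{eq:W2Entropy} is finite and that the admissible set is nonempty in a useful sense: the product coupling $\gamma_0 = \mu \otimes \nu$ belongs to $\PiAc(\mu,\nu)$, has finite second moment (since $M(\gamma_0) = M(\mu) + M(\nu) < \infty$ because $\mu,\nu \in \prob(\R^n)$), and satisfies $H_2(\gamma_0) = H_1(\mu) + H_1(\nu) < \infty$ by the hypothesis, using the factorization of the entropy over product measures. Hence the objective $(c,\gamma) + \veps H_2(\gamma)$ is finite at $\gamma_0$, so the infimum is a finite real number, and along any minimizing sequence we may assume the objective is bounded above by some constant $C$.

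Next I would extract a convergent subsequence. Along a minimizing sequence $(\gamma_k)$ the first marginals are all $\mu$ and the second all $\nu$, so $(\gamma_k)$ is tight (the marginals are a fixed pair of tight measures, hence the plans are tight), and by Prokhorov's theorem a subsequence converges narrowly to some $\gamma \in \probAll(\R^n\times\R^n)$; the marginal constraints pass to the limit under narrow convergence, so $\gamma \in \Pi(\mu,\nu)$. The second moments are uniformly bounded: $(c,\gamma_k)\le C + \veps\,\HnegRk{2}(\gamma_k)$, and one controls $\HnegRk{2}$ by the second moment via the standard inequality (Gibbs-type bound comparing $\mu$ against a Gaussian), so $M(\gamma_k)$ and $(c,\gamma_k)$ stay bounded; this upgrades the narrow convergence to convergence against test functions of quadratic growth, giving $(c,\gamma_k) \to (c,\gamma)$, in particular $(c,\gamma)<\infty$. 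For the entropy term I would invoke the stated lower semicontinuity of $H_2$ under bounded second moments (\thref{cor:fctLSC}), which yields $H_2(\gamma) \le \liminf_k H_2(\gamma_k)$; combining, $(c,\gamma) + \veps H_2(\gamma) \le \liminf_k \big((c,\gamma_k) + \veps H_2(\gamma_k)\big) = W^2_\veps(\mu,\nu)$, so $\gamma$ is a minimizer, and since the value is finite, $H_2(\gamma) < \infty$. Finiteness of $H_2(\gamma)$ forces $\gamma \in \probacAll(\R^n\times\R^n)$ by the very definition of $H_2$, and together with $M(\gamma) < \infty$ this gives $\gamma \in \PiAc(\mu,\nu)$, completing the argument.

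The main obstacle I anticipate is the entropy term: $H_2$ is only lower semicontinuous for the narrow topology \emph{under a uniform second-moment bound}, not unconditionally, so the proof really hinges on first extracting the uniform bound on $M(\gamma_k)$ before applying semicontinuity. This in turn relies on absorbing the possibly-negative part of the entropy into the quadratic cost — the key quantitative estimate is that $\HnegRk{2}(\gamma) \le a\,M(\gamma) + b$ for universal constants $a,b$ (obtained by comparing the density with a Gaussian of appropriate variance, on which $x\log x$ is bounded below), which shows the sublevel sets of the objective are tight and have bounded second moments. Once that estimate is in hand, everything else is the routine machinery of the direct method, and I would not belabor it.
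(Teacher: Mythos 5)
Your proposal follows the same direct-method route as the paper and lands on the correct conclusion, but it mislocates the difficulty. You present the uniform bound on $M_{\R^n\times\R^n}(\gamma_k)$ as the ``main obstacle,'' to be extracted by absorbing $\HnegRk{2}(\gamma_k)$ into the quadratic cost via a Gaussian comparison. That bootstrap is unnecessary: for any $\gamma\in\Pi(\mu,\nu)$ the second moment is pinned down exactly by the marginal constraints, $M_{\R^n\times\R^n}(\gamma)=\int(|x|^2+|y|^2)\,d\gamma=M(\mu)+M(\nu)$, which is a single fixed constant along the whole sequence, and it also bounds $(c,\gamma_k)\le 2\bigl(M(\mu)+M(\nu)\bigr)$ directly. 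So no estimate is needed to get the moment bound; the Gaussian-comparison bound on the negative part of the entropy (\thref{thm:EntropyMomentBound}) does enter, but only inside \thref{cor:fctLSC} to justify lower semicontinuity of $H_2$ under that (already free) moment bound, which is also how the paper uses it. A related overreach: you claim full convergence $(c,\gamma_k)\to(c,\gamma)$ by upgrading narrow convergence to Wasserstein-2 convergence. This is actually correct here — since $\gamma\in\Pi(\mu,\nu)$ has the same second moment as every $\gamma_k$, narrow convergence plus convergence of moments gives $W_2$-convergence — but it is heavier than needed; the paper only invokes narrow lower semicontinuity of $\gamma\mapsto(c,\gamma)$ (\cite[Lemma~4.3]{Villani-OptimalTransport-09}), which suffices. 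The rest (tightness of $\Pi(\mu,\nu)$, closedness of the marginal constraints, finiteness via the product coupling $\mu\otimes\nu$ with $H_2(\mu\otimes\nu)=H_1(\mu)+H_1(\nu)$, and reading off $\gamma\in\PiAc(\mu,\nu)$ from $H_2(\gamma)<\infty$) matches the paper's argument.
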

\begin{proof}
$\Pi(\mu,\nu)$ is tight \cite[Lemma 4.4]{Villani-OptimalTransport-09}, narrowly closed and hence narrowly compact. Consequently, let $(\gamma_k)_{k \in \N}$ be a minimizing sequence in $\Pi(\mu,\nu)$ narrowly converging to some $\gamma \in \Pi(\mu,\nu)$.
Narrow lower semi-continuity of $\gamma \mapsto (c,\gamma)$ is shown in \cite[Lemma 4.3]{Villani-OptimalTransport-09}. \correc{As for $H_2$, observe that $\gamma_k\in\Pi(\mu,\nu)$ implies that $M_{\R^n\times \R^n}(\gamma_k)=M_{\R^n}(\mu)+M_{\R^n}(\nu)<+\infty$, and~\thref{cor:fctLSC} ensures that $H_2$ is l.s.c. (see Def.~\ref{def:Entropy})}. Hence, the limit $\gamma$ is indeed a minimizer.
Since $\mu$, $\nu$ have finite second moments and entropy, the infimum is $< \infty$ (the product measure $\mu \otimes \nu$ is \correc{admissible} with finite cost). $c$ is non-negative, i.e.\ so is $(c,\gamma)$.
Therefore $H_2(\gamma)<\infty$ and thus $\gamma \in \PiAc(\mu,\nu)$.
\end{proof}

\begin{lemma}[Narrow Lower Semi-continuity of $W_\veps$ under Bounded Entropy and Moments]
	\thlabel{thm:W2Epslsc}
	Let $(\mu_k)_{k \in \N}$, $(\nu_k)_{k \in \N}$ be sequences in $\probac(\R^n)$ converging narrowly to some $\mu$, $\nu \in \probac(\R^n)$ respectively. Let further $H_1$ and $M$ of $(\mu_k)_k$, $(\nu_k)_k$, $\mu$ and $\nu$ be uniformly bounded by some $C < \infty$. Then
	\begin{equation*}
		W^2_\veps(\mu,\nu) \leq \liminf_{k \rightarrow \infty} W^2_\veps(\mu_k,\nu_k)\,.
	\end{equation*}
\end{lemma}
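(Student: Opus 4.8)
The plan is a routine application of the direct method. We may assume $L \eqdef \liminf_{k \to \infty} W^2_\veps(\mu_k,\nu_k) < \infty$, as otherwise there is nothing to prove, and after extracting a subsequence (not relabeled) we may assume $W^2_\veps(\mu_k,\nu_k) \to L$. Since $H_1(\mu_k), H_1(\nu_k) < \infty$, \thref{thm:W2EpsExistence} provides optimal couplings $\gamma_k \in \Pi(\mu_k,\nu_k)$ with $(c,\gamma_k) + \veps\,H_2(\gamma_k) = W^2_\veps(\mu_k,\nu_k) < \infty$.

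The first step is to show that $\{\gamma_k\}_{k \in \N}$ is tight and to identify the limit. The narrowly convergent sequences $\{\mu_k\}_k$ and $\{\nu_k\}_k$ are tight by Prokhorov's theorem, so for every $\veps' > 0$ there is a compact $K \subset \R^n$ with $\mu_k(\R^n \setminus K), \nu_k(\R^n \setminus K) \leq \veps'/2$ for all $k$; since $\gamma_k \in \Pi(\mu_k,\nu_k)$ we get $\gamma_k\bigl((\R^n \times \R^n)\setminus(K \times K)\bigr) \leq \mu_k(\R^n \setminus K) + \nu_k(\R^n \setminus K) \leq \veps'$ with $K \times K$ compact, hence $\{\gamma_k\}_k$ is tight. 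Extracting a further subsequence (still not relabeled, along which $W^2_\veps(\mu_k,\nu_k) \to L$ still holds) we obtain $\gamma_k \to \gamma$ narrowly for some $\gamma \in \prob(\R^n \times \R^n)$. Testing against test functions depending only on the first, resp.\ second, variable and using $\mu_k \to \mu$, $\nu_k \to \nu$ narrowly shows that the marginals of $\gamma$ are $\mu$ and $\nu$, i.e.\ $\gamma \in \Pi(\mu,\nu)$.

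The second step is to pass to the limit term by term. Because $\gamma_k \in \Pi(\mu_k,\nu_k)$ we have $M_{\R^n \times \R^n}(\gamma_k) = M(\mu_k) + M(\nu_k) \leq 2C$, so the $\gamma_k$ have uniformly bounded second moments; in particular $\HRkneg{2}(\gamma_k)$ is bounded from above, so the entropy values are bounded below and the relevant $\liminf$'s are finite. By narrow lower semicontinuity of $H_2$ under bounded second moments (\thref{cor:fctLSC}) and of $\gamma \mapsto (c,\gamma)$ for the nonnegative continuous cost $c$ (\cite[Lemma 4.3]{Villani-OptimalTransport-09}), and since the sum of two $\liminf$'s is at most the $\liminf$ of the sum,
\begin{equation*}
	W^2_\veps(\mu,\nu) \leq (c,\gamma) + \veps\,H_2(\gamma) \leq \liminf_{k \to \infty} (c,\gamma_k) + \veps \liminf_{k \to \infty} H_2(\gamma_k) \leq \liminf_{k \to \infty} \bigl( (c,\gamma_k) + \veps\,H_2(\gamma_k) \bigr) = L.
\end{equation*}

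The only point where the hypotheses are genuinely used — and the only mild subtlety — is the lower bound on the entropy terms: without a uniform second-moment bound the negative part $\HRkneg{2}(\gamma_k)$ could be unbounded and lower semicontinuity of $H_2$ would fail. Here this bound is automatic from $M_{\R^n \times \R^n}(\gamma_k) = M(\mu_k) + M(\nu_k) \leq 2C$, so the argument goes through with the bound $C$ on moments (the bound on $H_1$ of the $\mu_k$, $\nu_k$ being needed only to invoke \thref{thm:W2EpsExistence}). Everything else is the standard direct method.
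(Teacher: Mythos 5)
Your proof is correct and follows essentially the same route as the paper's: extract a subsequence realizing the $\liminf$, take optimal couplings from \thref{thm:W2EpsExistence}, use tightness of $\Pi(\mu_k,\nu_k)$ (you prove this directly, the paper cites \cite[Lemma~4.4]{Villani-OptimalTransport-09}) to get a narrow limit $\gamma\in\Pi(\mu,\nu)$, and conclude by narrow lower semi-continuity of $(c,\cdot)$ and of $H_2$ under the uniform second-moment bound. Your extra care about the negative part of $H_2$ and the finiteness of the separate $\liminf$'s is exactly the point the paper compresses into its reference to \thref{cor:fctLSC}.
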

\begin{proof}
  \correc{Up to the extraction of a (not relabeled) subsequence, we may assume that $\lim_{k \rightarrow \infty} W^2_\veps(\mu_k,\nu_k)=\liminf_{k \rightarrow \infty} W^2_\veps(\mu_k,\nu_k)$.  Now, let }
 $(\gamma_k)_{k \in \N}$ be a sequence of optimal transport plans for $W^2_\veps(\mu_k,\nu_k)$, taking values in $\probac(\R^n \times \R^n)$ (\thref{thm:W2EpsExistence}).
  Since $(\mu_k)_{k \in \N}$ and $(\nu_k)_{k \in \N}$ are convergent sequences (or have bounded second moments), the sets $\{\mu_k\}_{k \in \N}$ and $\{\nu_k\}_{k \in \N}$ are tight, hence so is $\{\gamma_k\}_{k \in \N}$ \cite[Lemma~4.4]{Villani-OptimalTransport-09}. Consequently, up to extracting a subsequence, $(\gamma_k)_k$ converges narrowly to some $\gamma \in \prob(\R^n \times \R^n)$.
	Taking the relation $\gamma_k \in \Pi(\mu_k,\nu_k)$ to the limit we find $\gamma \in \Pi(\mu,\nu)$.
	By narrow l.s.c.~of $\gamma \mapsto (c,\gamma)$ and $H_2$ (c.f.~\thref{thm:W2EpsExistence}) the claim follows.
\end{proof}

\begin{lemma}
	\thlabel{thm:W2EpsStrictConvexity}
	For a fixed $\mu \in \probac(\R^n)$ the map
	\begin{equation*}
		\probac(\R^n) \ni \nu \mapsto W^2_\veps(\mu,\nu)
	\end{equation*}
	is strictly convex.
\end{lemma}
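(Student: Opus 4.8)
The plan is to exhibit strict convexity by expanding the objective along a segment $\nu_t = (1-t)\nu_0 + t\nu_1$ for $\nu_0 \neq \nu_1$ in $\probac(\R^n)$ and showing that the minimal value is a strictly convex function of $t$. The key structural observation is that the entropy term $H_2$ is strictly convex on $\probacAll(\R^n\times\R^n)$ (being the integral of the strictly convex function $r \mapsto r\log r$), while the linear term $\gamma \mapsto (c,\gamma)$ and the marginal constraints defining $\Pi(\mu,\nu)$ are all linear/affine in $(\gamma,\nu)$ jointly. So I would first set up the joint problem: let $\gamma_0 \in \PiAc(\mu,\nu_0)$ and $\gamma_1 \in \PiAc(\mu,\nu_1)$ be the optimal plans (which exist and have finite entropy by \thref{thm:W2EpsExistence}, provided $H_1(\mu), H_1(\nu_i) < \infty$; if any of these entropies is $+\infty$ one checks the statement is vacuous or handled separately since then $W^2_\veps = +\infty$ along the relevant points — I would note this edge case). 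Then $\gamma_t \eqdef (1-t)\gamma_0 + t\gamma_1$ has first marginal $\mu$ and second marginal $\nu_t$, so $\gamma_t \in \Pi(\mu,\nu_t)$ and is an admissible competitor.

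The main estimate is then
\begin{align*}
	W^2_\veps(\mu,\nu_t) & \leq (c,\gamma_t) + \veps\,H_2(\gamma_t) \\
		& = (1-t)(c,\gamma_0) + t(c,\gamma_1) + \veps\,H_2\big((1-t)\gamma_0 + t\gamma_1\big) \\
		& \leq (1-t)\big((c,\gamma_0) + \veps H_2(\gamma_0)\big) + t\big((c,\gamma_1) + \veps H_2(\gamma_1)\big) \\
		& = (1-t)\,W^2_\veps(\mu,\nu_0) + t\,W^2_\veps(\mu,\nu_1)\,,
\end{align*}
where the second inequality uses convexity of $H_2$, and it is \emph{strict} whenever $\gamma_0 \neq \gamma_1$ and $t \in (0,1)$, by strict convexity of $r \mapsto r\log r$ applied pointwise (together with the fact that $H_2(\gamma_t)$ is finite, so no $\infty - \infty$ ambiguity arises). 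Hence it remains only to rule out $\gamma_0 = \gamma_1$: if they were equal, their second marginals would coincide, giving $\nu_0 = \nu_1$, contrary to assumption. Therefore strict inequality holds and $\nu \mapsto W^2_\veps(\mu,\nu)$ is strictly convex.

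The one point requiring a little care — and the place I expect the only genuine subtlety — is the passage from pointwise strict convexity of $r\log r$ to strict inequality for the \emph{integral} $H_2$. The clean way is: since $\gamma_0 \neq \gamma_1$ as measures and both are absolutely continuous, their densities differ on a set of positive Lebesgue measure; on that set $(1-t)\gamma_0(z)\log\gamma_0(z) + t\,\gamma_1(z)\log\gamma_1(z) > \gamma_t(z)\log\gamma_t(z)$ strictly (with the convention $0\log 0 = 0$), while the reverse inequality holds everywhere; integrating, and using that all three integrals are finite (the competitor $\gamma_t$ has finite entropy because $H_2(\gamma_0), H_2(\gamma_1) < \infty$ and $H_2$ is convex, so $H_2(\gamma_t) \le (1-t)H_2(\gamma_0) + tH_2(\gamma_1) < \infty$), yields the strict inequality for $H_2$. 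I would phrase convexity of $H_2$ and this strict-integral-comparison as the two elementary lemmas invoked, and keep the rest as the short chain of (in)equalities above.
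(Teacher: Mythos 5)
Your proof is correct and follows exactly the paper's argument: take the optimal couplings $\gamma_0,\gamma_1$ for $\nu_0,\nu_1$, observe that their convex combination is admissible for $\nu_t=(1-t)\nu_0+t\nu_1$, and conclude strict inequality from linearity of $(c,\cdot)$ plus strict convexity of $H_2$, with $\gamma_0\neq\gamma_1$ forced by $\nu_0\neq\nu_1$. Your additional remarks (ruling out $\infty-\infty$ via finiteness of all three entropies, the pointwise-to-integral step, and the edge case $H_1(\mu)=+\infty$ making the function identically $+\infty$) are care the paper leaves implicit, but the route is the same.
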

\begin{proof}
	For two $\nu_1$, $\nu_2 \in \probac(\R^n)$ let $\gamma_1$, $\gamma_2$ be the corresponding optimal couplings in $W^2_\veps$. For $0 < \alpha < 1$ let $\nu = \alpha\,\nu_1 + (1-\alpha)\,\nu_2$. Then $\gamma = \alpha\,\gamma_1 + (1-\alpha)\,\gamma_2 \in \PiAc(\mu,\nu)$. By linearity of $(c,\cdot)$ and strict convexity of $H_2$, the cost of $\gamma$ (an upper bound for $W^2_\veps(\mu,\nu)$) is strictly smaller than $\alpha\,W^2_\veps(\mu,\nu_1) + (1-\alpha) W^2_\veps(\mu,\nu_2)$.
\end{proof}

\subsection[Gamma-convergence of Entropy Regularized Optimal Transport]{$\Gamma$-convergence of Entropy Regularized Optimal Transport}

\begin{definition}
	\label{def:OTConvergenceSetup}
	Throughout this Section we consider the following set-up: let $\mu$, $\nu \in \probac(\R^n)$ with finite entropy and let $(\veps_k)_{k \in \N}$ be a non-negative sequence converging to zero. We introduce the sequence of functionals
	\begin{align}
		\mc{F}_k : {} & \probAll(\R^n \times \R^n) \rightarrow \R \cup \{\infty\}, \qquad &
		\gamma \mapsto & \begin{cases}
			(c,\gamma) + \veps_k\,H_2(\gamma) & \tn{if } \gamma \in \Pi(\mu,\nu), \\
			\infty & \tn{else,} \end{cases} \\
		\intertext{and}
		\mc{F} : {} & \probAll(\R^n \times \R^n) \rightarrow \R \cup \{\infty\}, \qquad &
		\gamma \mapsto & \begin{cases}
			(c,\gamma) & \tn{if } \gamma \in \Pi(\mu,\nu), \\
			\infty & \tn{else.} \end{cases}
	\end{align}
\end{definition}

\noindent \correc{The aim of this section is to prove the $\Gamma$-convergence of the entropy-regularized functional $\mc{F}_k$ towards $\mc{F}$. We say that $(\mc{F}_k)_{k\in \N}$ $\Gamma$-converges towards $\mc{F}$ w.r.t the narrow topology if the following two conditions hold for every $\gamma\in \probAll(\R^n \times \R^n)$:
  \begin{itemize}
    \item (Liminf Condition) For any sequence $(\gamma_k)_{k \in \N}$ in $\probAll(\R^n \times \R^n)$ such that $\gamma_k \rightarrow \gamma$ narrowly,
	\begin{align}
		\mc{F}(\gamma) \leq \liminf_{k \rightarrow \infty} \mc{F}_k(\gamma_k)\,,
	\end{align}
  \item (Limsup Condition) There exists a (recovery) sequence $(\gamma_k)_{k\in \N}$ such that $\gamma_k \rightarrow \gamma$ narrowly and 
    \begin{align}
		\mc{F}(\gamma) \geq \limsup_{k \rightarrow \infty} \mc{F}_k(\gamma_k)\,.
    \end{align}
  \end{itemize}
We refer the reader to~\cite{Dalmaso1993} for more details about $\Gamma$-convergence.
Let us simply note a crucial property of that notion: if $(\mc{F}_k)_{k\in\N}$ is equi-coercive and $\Gamma$-converges towards $\mc{F}$, then $\lim_{k\to+\infty}\inf {\mc{F}_k}=\inf\mc{F}$ by~\cite[Theorem 7.8]{Dalmaso1993}. Moreover if $(\gamma_k)_{k\in\N}$ is a sequence of minimizers of $\mc{F}_k$ for each $k\in \N$, then any cluster point of $(\gamma_k)_{k\in\N}$ is a minimizer of $\mc{F}$ (see~\cite[Proposition 7.18]{Dalmaso1993}).
In particular, if the minimizer of $\mc{F}$ is unique, then the whole sequence $(\gamma_k)_{k\in\N}$ converges towards this minimizer.

That is precisely the case in our setting. First, for any $k\in \N$, all admissible $\gamma$ should belong to $\Pi(\mu,\nu)$ which is tight, hence the equicoercivity. Since we assume that $H_1(\mu)<+\infty$, $\mu$ does not give mass to small sets and by~\cite[Theorem 2.12]{Villani03} the optimal transport plan $\gamma$ for $W^2(\mu,\nu)$ is unique. As a result, we shall obtain 
the convergence of the value of $W^2_{\veps_k}(\mu,\nu)$ toward $W^2(\mu,\nu)$ when $k\to+\infty$, and the convergence of the optimal transport plan $\gamma_{\varepsilon_k}$ solving~\eqref{eq:W2Entropy} towards an optimal transport plan for~\eqref{eq:W2}. More precisely,

\begin{theorem}
	\thlabel{thm:OTGammaConvergence}
  The sequence  $(\mc{F}_k)_{k\in \N}$ $\Gamma$-converges to $\mc{F}$ w.r.t.~the narrow topology. As a result, 
  \begin{equation*}
    \lim_{k\to+\infty} W^2_{\varepsilon_k}(\mu, \nu)=W^2(\mu,\nu),
  \end{equation*}
  and if $\gamma_k$, $k\in \N$, (resp. $\gamma$) denotes the unique optimal transport plan for $W^2_{\varepsilon_k}(\mu, \nu)$ (resp. $W^2(\mu,\nu)$), then $\gamma_k\to \gamma$ in the narrow topology.
\end{theorem}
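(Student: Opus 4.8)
The plan is to verify the two $\Gamma$-convergence conditions directly. Since equi-coercivity of $(\mc{F}_k)_k$ holds trivially (the proper domain of every $\mc{F}_k$ lies in the tight, narrowly compact set $\Pi(\mu,\nu)$) and the relevant uniqueness statements have already been recalled, the convergence $W^2_{\veps_k}(\mu,\nu)=\inf\mc{F}_k\to\inf\mc{F}=W^2(\mu,\nu)$ and the convergence of the optimal plans will then follow from the standard properties of $\Gamma$-convergence quoted from \cite{Dalmaso1993}. We may assume $\veps_k>0$ for all $k$, the case $\veps_k=0$ being trivial since then $\mc{F}_k=\mc{F}$.

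\emph{Liminf inequality.} Let $\gamma_k\to\gamma$ narrowly. Passing to a subsequence, we may assume $\liminf_k\mc{F}_k(\gamma_k)$ is a finite limit, so $\mc{F}_k(\gamma_k)\le C<\infty$; then $\gamma_k\in\Pi(\mu,\nu)$, and passing the marginal constraints to the narrow limit gives $\gamma\in\Pi(\mu,\nu)$. Since $\gamma_k\in\Pi(\mu,\nu)$ forces $M_{\R^n\times\R^n}(\gamma_k)=M(\mu)+M(\nu)$, comparison of the entropy with a Gaussian reference measure (the estimate underlying \thref{cor:fctLSC}) yields a uniform lower bound $H_2(\gamma_k)\ge -C_0$, whence $\veps_k H_2(\gamma_k)\ge -\veps_k C_0\to 0$. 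Using the narrow lower semicontinuity of $\gamma\mapsto(c,\gamma)$ (\cite[Lem.~4.3]{Villani-OptimalTransport-09}) and $\liminf(a_k+b_k)\ge\liminf a_k+\liminf b_k$,
\[
  \liminf_{k\to\infty}\mc{F}_k(\gamma_k)\ \ge\ \liminf_{k\to\infty}(c,\gamma_k)+\liminf_{k\to\infty}\veps_k H_2(\gamma_k)\ \ge\ (c,\gamma)=\mc{F}(\gamma).
\]

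\emph{Limsup inequality.} If $\gamma\notin\Pi(\mu,\nu)$ or $(c,\gamma)=+\infty$, then $\mc{F}(\gamma)=+\infty$ and $\gamma_k\equiv\gamma$ is a recovery sequence. Assume $\gamma\in\Pi(\mu,\nu)$ with $(c,\gamma)<\infty$. The key device is a \emph{block approximation}: for $\delta>0$ partition $\R^n$ into cubes $(Q_i)_i$ of side $\delta$ and set
\[
  \gamma^\delta\ \eqdef\ \sum_{i,j\,:\,\mu(Q_i)\nu(Q_j)>0}\frac{\gamma(Q_i\times Q_j)}{\mu(Q_i)\,\nu(Q_j)}\,\bigl(\mu\,\mathbf{1}_{Q_i}\bigr)\otimes\bigl(\nu\,\mathbf{1}_{Q_j}\bigr),
\]
identifying absolutely continuous measures with their densities. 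Summing over $j$ (resp.\ $i$) recovers $\mu$ (resp.\ $\nu$), so $\gamma^\delta\in\PiAc(\mu,\nu)$. Since $\gamma^\delta$ is obtained from $\gamma$ by rearranging mass only within the boxes $Q_i\times Q_j$, whose diameter is $O(\delta)$, one has $\gamma^\delta\to\gamma$ narrowly as $\delta\to 0$; bounding the oscillation of $|x-y|^2$ on each box by $O\bigl(\delta(1+\mathrm{dist}(Q_i,Q_j))\bigr)$ and summing against $\gamma(Q_i\times Q_j)$, with $\sum_{i,j}\gamma(Q_i\times Q_j)\,\mathrm{dist}(Q_i,Q_j)\le\int|x-y|\,d\gamma\le\sqrt{(c,\gamma)}$, gives $(c,\gamma^\delta)\to(c,\gamma)$. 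Splitting $\log$ of the density of $\gamma^\delta$ into the contributions $\log\frac{\gamma(Q_i\times Q_j)}{\mu(Q_i)\nu(Q_j)}$, $\log\mu(x)$ and $\log\nu(y)$, one computes
\[
  H_2(\gamma^\delta)=H_1(\mu)+H_1(\nu)+\sum_{i,j}\gamma(Q_i\times Q_j)\log\frac{\gamma(Q_i\times Q_j)}{\mu(Q_i)\nu(Q_j)}\,;
\]
since $H_1(\mu),H_1(\nu)<\infty$ and $\gamma(Q_i\times Q_j)\le\min\{\mu(Q_i),\nu(Q_j)\}$, the last (discrete relative-entropy) term is bounded by $-\frac{1}{2}\sum_i\mu(Q_i)\log\mu(Q_i)-\frac{1}{2}\sum_j\nu(Q_j)\log\nu(Q_j)$, which is $\le C_1\log(1/\delta)+C_2$ (comparing the discrete entropies of $\mu$, $\nu$ on the grid with their differential entropies, using the finite second moments to control the tails). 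Hence $H_2(\gamma^\delta)\le C_1'\log(1/\delta)+C_2'$ with constants depending only on $n,\mu,\nu$. Now pick $\delta_k=\exp(-\veps_k^{-1/2})$, so that $\delta_k\to 0$ and $\veps_k H_2(\gamma^{\delta_k})\le\veps_k^{1/2}C_1'+\veps_k C_2'\to 0$. Then $\gamma_k\eqdef\gamma^{\delta_k}\to\gamma$ narrowly and $\mc{F}_k(\gamma_k)=(c,\gamma^{\delta_k})+\veps_k H_2(\gamma^{\delta_k})\to(c,\gamma)=\mc{F}(\gamma)$, so $(\gamma_k)_k$ is a recovery sequence.

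\emph{Main obstacle and conclusion.} The liminf inequality is lower-semicontinuity bookkeeping; the substance is in the limsup inequality, namely producing, for an arbitrary coupling $\gamma$, an absolutely continuous recovery sequence whose marginals stay \emph{exactly} equal to $\mu$ and $\nu$, which converges in cost, and whose entropy grows only logarithmically in the mesh size — slowly enough to be annihilated by $\veps_k$ in the joint limit. The block approximation delivers the exact marginals by construction and the cost convergence by a short estimate particular to the quadratic cost; the one genuinely technical point is the bound $H_2(\gamma^\delta)=O(\log(1/\delta))$, obtained by isolating the fixed marginal entropies and controlling the remaining discrete coupling entropy via $\gamma(Q_i\times Q_j)\le\min\{\mu(Q_i),\nu(Q_j)\}$ together with a standard tail estimate from the finite second moments. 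With both $\Gamma$-convergence conditions and equi-coercivity in place, the convergence $W^2_{\veps_k}(\mu,\nu)\to W^2(\mu,\nu)$ and $\gamma_k\to\gamma$ follows from \cite[Thm.~7.8, Prop.~7.18]{Dalmaso1993}, the uniqueness of the unregularized optimal plan (\cite[Thm.~2.12]{Villani03}, valid since $H_1(\mu)<\infty$) and the uniqueness of each $\gamma_k$ (strict convexity of $H_2$, cf.\ \thref{thm:W2EpsStrictConvexity}).
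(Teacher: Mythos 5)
Your proposal is correct and follows essentially the same route as the paper's: the liminf part is the same lower-semicontinuity bookkeeping with the uniform entropy lower bound from finite second moments, and the limsup part uses the identical block approximation with an $O(\log(1/\delta))$ entropy estimate and a mesh chosen to vanish slowly enough relative to $\veps_k$ (your $\delta_k=\exp(-\veps_k^{-1/2})$ versus the paper's $\ell_k=\veps_k$, both of which make $\veps_k\log(1/\delta_k)\to 0$). The only cosmetic differences are that you bound the discrete coupling entropy via $\gamma(Q_i\times Q_j)\le\sqrt{\mu(Q_i)\nu(Q_j)}$ instead of simply dropping $\log\gamma(Q_i\times Q_j)\le 0$, and you establish $(c,\gamma^\delta)\to(c,\gamma)$ by a direct box-oscillation estimate rather than via the Wasserstein distance over $\R^{2n}$; both yield the same conclusion.
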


The proof of~\thref{thm:OTGammaConvergence} is divided in~\thref{thm:OTGammaLiminf} and \thref{thm:OTGammaLimsup}.
}

\begin{proposition}[Liminf Condition]
	\thlabel{thm:OTGammaLiminf}
	Given the set-up of Definition \ref{def:OTConvergenceSetup}, let $\gamma \in \probAll(\R^n \times \R^n)$ and a sequence $(\gamma_k)_{k \in \N}$ in $\probAll(\R^n \times \R^n)$ such that $\gamma_k \rightarrow \gamma$ narrowly. Then
	\begin{align}
		\mc{F}(\gamma) \leq \liminf_{k \rightarrow \infty} \mc{F}_k(\gamma_k)\,.
	\end{align}
\end{proposition}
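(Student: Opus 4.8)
The plan is to reduce to a subsequence along which $\mc{F}_k(\gamma_k)$ is finite and then compare the entropy term against a Gaussian. If $\liminf_{k\to\infty}\mc{F}_k(\gamma_k)=+\infty$ there is nothing to prove, so after passing to a (not relabeled) subsequence we may assume that $\mc{F}_k(\gamma_k)$ converges to $\liminf_{k\to\infty}\mc{F}_k(\gamma_k)<+\infty$ and that $\mc{F}_k(\gamma_k)<+\infty$ for every $k$. The latter forces $\gamma_k\in\Pi(\mu,\nu)$ and $H_2(\gamma_k)<+\infty$. Since $\Pi(\mu,\nu)$ is narrowly closed (as already used in the proof of~\thref{thm:W2EpsExistence}) and $\gamma_k\to\gamma$ narrowly, we get $\gamma\in\Pi(\mu,\nu)$, hence $\mc{F}(\gamma)=(c,\gamma)$; it thus remains to show $(c,\gamma)\le\lim_{k\to\infty}\bigl[(c,\gamma_k)+\veps_k H_2(\gamma_k)\bigr]$.

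First, the cost $\gamma\mapsto(c,\gamma)$ is narrowly lower semi-continuous (\cite[Lemma~4.3]{Villani-OptimalTransport-09}; alternatively because $c\ge 0$ is continuous), so $(c,\gamma)\le\liminf_{k\to\infty}(c,\gamma_k)$. Second, and this is the crucial point, the entropy must be bounded from below uniformly in $k$: although $H_2$ can take negative values, every $\gamma_k\in\Pi(\mu,\nu)$ has the \emph{fixed} second moment $M_{\R^n\times\R^n}(\gamma_k)=M(\mu)+M(\nu)<+\infty$, and this prevents $H_2(\gamma_k)$ from drifting to $-\infty$. Concretely, writing $g$ for the standard Gaussian density on $\R^n\times\R^n$, non-negativity of the Kullback--Leibler divergence $\int\gamma_k\log(\gamma_k/g)\ge 0$ (by Jensen's inequality, since $t\mapsto t\log t$ is convex) yields $H_2(\gamma_k)\ge\int\gamma_k\log g = -n\log(2\pi)-\frac{1}{2}\bigl(M(\mu)+M(\nu)\bigr)$, a constant independent of $k$ (equivalently, the negative part of $H_2(\gamma_k)$ is controlled by the second moment). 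Since $\veps_k\to 0$, it follows that $\liminf_{k\to\infty}\veps_k H_2(\gamma_k)\ge 0$.

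Combining the two bounds, $\liminf_{k\to\infty}\mc{F}_k(\gamma_k)\ge\liminf_{k\to\infty}(c,\gamma_k)+\liminf_{k\to\infty}\veps_k H_2(\gamma_k)\ge(c,\gamma)=\mc{F}(\gamma)$, which is the assertion. The only nontrivial ingredient is the a priori lower bound on $H_2(\gamma_k)$ via the (automatically bounded) second moment; I expect this to be the main, and essentially only, obstacle, everything else being a routine combination of lower semi-continuity of the cost, narrow closedness of $\Pi(\mu,\nu)$, and $\veps_k\to 0$. In particular, for this direction no control beyond the automatic bound on $M_{\R^n\times\R^n}(\gamma_k)$, and in particular no bound on the positive part of the entropy, is required.
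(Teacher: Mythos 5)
Your proof is correct, and the overall strategy coincides with the paper's: reduce to the case $\gamma\in\Pi(\mu,\nu)$ with absolutely continuous $\gamma_k$, exploit the fact that the second moment $M_{\R^n\times\R^n}(\gamma_k)=M(\mu)+M(\nu)$ is the same for all $k$ to bound $H_2(\gamma_k)$ from below uniformly, conclude $\liminf_k \veps_k H_2(\gamma_k)\ge 0$, and finish with narrow lower semi-continuity of the transport cost. The only place where you diverge from the paper is in how the entropy lower bound is obtained: the paper invokes its Corollary~\ref{thm:EntropyMomentBound} (a H\"older-inequality argument from \cite{JKO1998}) which gives $H_2(\gamma_k)\ge -C(1+M(\mu)+M(\nu))^\alpha$ with $\alpha<1$, whereas you compare against a Gaussian reference density $g$ on $\R^{2n}$ and use non-negativity of $\KL(\gamma_k\,|\,g)$ to get $H_2(\gamma_k)\ge -n\log(2\pi)-\tfrac12(M(\mu)+M(\nu))$. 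Your bound is more elementary and entirely sufficient here; the paper's sublinear ($\alpha<1$) version is more elaborate but is needed later in the a priori estimates of Section~\ref{sec:JKO}, where the coercivity gained from $\alpha<1$ is essential. Your bookkeeping (passing to a subsequence with finite $\liminf$ rather than arguing that $\gamma_k\notin\Pi(\mu,\nu)$ eventually when $\gamma\notin\Pi(\mu,\nu)$) is an equivalent way to dispatch the trivial cases.
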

\begin{proof}
	If $\gamma \notin \Pi(\mu,\nu)$ then the condition holds trivially: since $\Pi(\mu,\nu)$ is closed, in this case $\gamma_k \notin \Pi(\mu,\nu)$ for sufficiently large $k$.
	
	If $\gamma \in \Pi(\mu,\nu)$ it suffices to consider sub-sequences of $(\gamma_k)_k$ with values in $\PiAc(\mu,\nu)$ (as otherwise again, the condition holds trivially). Since $M_{\R^n \times \R^n}(\gamma_k) = M_{\R^n}(\mu) + M_{\R^n}(\nu) < \infty$ there is a finite constant $C<\infty$ such that $\HnegB(\gamma_k) > -C$ (\thref{thm:EntropyMomentBound}) and consequently $\liminf_{k \rightarrow \infty} \veps_k\,H_2(\gamma_k) \geq 0$. The statement then follows from narrow l.s.c.~of $\gamma \mapsto (c,\gamma)$ (cf.~\thref{thm:W2EpsExistence}).
\end{proof}
%


The limsup condition of $\Gamma$-convergence requires considerably more effort and is divided into multiple steps. Note however that the approach we use in the following is still rather short and direct compared to the elaborate framework established in \cite{LeonardSchroedingerMK2012}.

\begin{definition}[Block Approximation]
Let $\mu$, $\nu \in \probac(\R^n)$, $H_1(\mu)$, $H_1(\nu)< \infty$, $\gamma \in \Pi(\mu,\nu)$.
Given $k=(k_1,\ldots,k_n) \in \Z^n$ we define $Q_k \eqdef [k_1,k_1+1) \times \ldots \times [k_n,k_n+1) \subset \R^n$, and for $\ell>0$ we write $Q_k^\ell = \ell \cdot Q_k$.
We define the \emph{block approximation} of $\gamma$ at scale $\ell$ by
\begin{align}
	\gamma_\ell & \eqdef \sum_{(j,k)\in(\Z^n)^2} \gamma\left(\Qjl\times \Qkl\right)(\mujl\otimes \nukl),
\end{align}
where for every Borel set $\sigma \subset \R^n$,
\begin{align}
	\mujl(\sigma) & \eqdef \begin{cases}
		\frac{\mu(\sigma \cap \Qjl)}{\mu(\Qjl)} & \text{if } \mu(\Qjl)>0,\\
		0 & \text{otherwise.}
	\end{cases}
	& \tn{ and } \quad
	\nukl(\sigma) & \eqdef \begin{cases}
		\frac{\nu(\sigma \cap \Qjl)}{\nu(\Qkl)} & \text{if } \nu(\Qkl)>0,\\
		0 & \text{otherwise.}
	\end{cases}
\end{align}
It can be shown that $\gamma_\ell$ is indeed a Borel probability measure with density
\begin{align}
	\gamma_\ell(x,y) = \begin{cases}
		\gamma(\Qjl\times\Qkl) \frac{\mu(x)\,\nu(y)}{\mu(\Qjl)\,\nu(\Qkl)}
			& \text{if } \mu(\Qjl)>0 \tn{ and } \nu(\Qkl)>0,\\
		0 & \text{otherwise,}
	\end{cases}
\end{align}
where $(j,k)\in(\Z^n)^2$ is uniquely determined by $(x,y)\in\Qjl\times\Qkl$.
\end{definition}
\noindent The block approximation is illustrated in Fig.~\ref{fig:BlockApproximation}.
\begin{figure}
	\centering
	\includegraphics[]{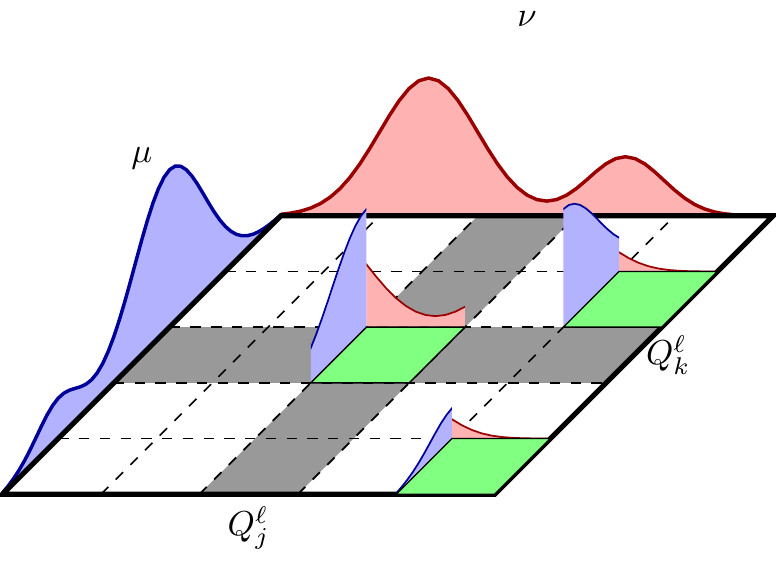}
	\caption{Block approximation of $\gamma$: $\gamma_\ell$ is an approximation of $\gamma$ by pieces of the product measure $\mu \otimes \nu$ such that the mass of $\gamma$ and $\gamma_\ell$ is equal on each cube $\Qkl \times \Qjl$.}
	\label{fig:BlockApproximation}
\end{figure}

\begin{proposition}
	\thlabel{thm:OTConvergenceBlockMarginals}
	For $\gamma \in \Pi(\mu,\nu)$ the block approximation $\gamma_\ell$ is also in $\Pi(\mu,\nu)$.
\end{proposition}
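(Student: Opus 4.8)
The plan is to show that the block approximation $\gamma_\ell$ has the correct marginals, namely that pushing $\gamma_\ell$ forward under the projections $\pi_1 : (x,y) \mapsto x$ and $\pi_2 : (x,y) \mapsto y$ yields $\mu$ and $\nu$ respectively. By symmetry of the construction, it suffices to verify the first marginal. First I would observe that, from the definition, the measure $\mu^\ell_j$ is a probability measure supported on the cube $Q_j^\ell$ whenever $\mu(Q_j^\ell)>0$: it is the normalized restriction of $\mu$ to that cube. Likewise $\nu^\ell_k$ is a probability measure supported on $Q_k^\ell$. In particular each term $\mu^\ell_j \otimes \nu^\ell_k$ is a probability measure on $Q_j^\ell \times Q_k^\ell$.

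Next I would compute the first marginal of a single term: for any Borel set $\sigma \subset \R^n$,
\begin{align*}
	(\pi_1)_\# (\mu^\ell_j \otimes \nu^\ell_k)(\sigma) = (\mu^\ell_j \otimes \nu^\ell_k)(\sigma \times \R^n) = \mu^\ell_j(\sigma)\,\nu^\ell_k(\R^n) = \mu^\ell_j(\sigma),
\end{align*}
since $\nu^\ell_k$ is a probability measure (when $\nu(Q_k^\ell)>0$; otherwise the whole term vanishes and so does $\gamma(Q_j^\ell \times Q_k^\ell)$, because the first marginal of $\gamma$ is $\mu$ and $\gamma(Q_j^\ell\times Q_k^\ell) \le \mu(Q_j^\ell)$, so actually only $\nu(Q_k^\ell)=0$ needs a moment's thought — but if $\gamma(Q_j^\ell \times Q_k^\ell)>0$ then projecting onto the second factor forces $\nu(Q_k^\ell)>0$). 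Summing over $k \in \Z^n$ and using that $\sum_{k} \gamma(Q_j^\ell \times Q_k^\ell) = \gamma(Q_j^\ell \times \R^n) = \mu(Q_j^\ell)$ (the $\{Q_k^\ell\}_k$ tile $\R^n$), the contribution of all terms with fixed $j$ to the first marginal is $\mu(Q_j^\ell)\,\mu^\ell_j(\sigma) = \mu(\sigma \cap Q_j^\ell)$ when $\mu(Q_j^\ell)>0$, and $0$ otherwise (but then $\mu(\sigma \cap Q_j^\ell)=0$ as well).

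Finally I would sum over $j \in \Z^n$: since the cubes $\{Q_j^\ell\}_{j \in \Z^n}$ form a partition of $\R^n$ up to the constraint that $\sigma = \bigcup_j (\sigma \cap Q_j^\ell)$ is a disjoint union, countable additivity of $\mu$ gives $\sum_j \mu(\sigma \cap Q_j^\ell) = \mu(\sigma)$. Hence $(\pi_1)_\# \gamma_\ell = \mu$, and the symmetric computation gives $(\pi_2)_\# \gamma_\ell = \nu$, so $\gamma_\ell \in \Pi(\mu,\nu)$. There is no real obstacle here; the only point requiring a little care is the bookkeeping of the degenerate cases $\mu(Q_j^\ell)=0$ or $\nu(Q_k^\ell)=0$, which one dispatches by noting that in those cases the relevant terms and masses vanish simultaneously (using $\gamma(Q_j^\ell\times Q_k^\ell) \le \min\{\mu(Q_j^\ell),\nu(Q_k^\ell)\}$, which holds because $\gamma$ has marginals $\mu$ and $\nu$). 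One also checks implicitly that the double sum converges and can be rearranged, which is justified since all terms are non-negative.
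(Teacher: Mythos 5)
Your proof is correct and follows essentially the same approach as the paper: decompose the product measure termwise, sum over one index to marginalize, handle the degenerate cubes by noting that $\gamma$ assigns zero mass where the corresponding marginal does. The paper checks the second marginal while you check the first, but the computations are mirror images.
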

\begin{proof}
	For any Borel set $\sigma \subset \R^n$ we have $(\mujl\otimes \nukl)(\R^n\times \sigma)= \nukl(\sigma)$ if $\mu(\Qjl)>0$ and $0$ otherwise. Thus
	\begin{align*}
		\gamma_\ell(\R^n\times \sigma)&= \sum_{(j,k) \in(\Z^n)^2}\gamma\left(\Qjl\times \Qkl\right)(\mujl\otimes \nukl) (\R^n\times \sigma)
		= \sum_{\substack{(j,k) \in(\Z^n)^2:\\ \mu(\Qjl)>0}}\gamma\left(\Qjl\times \Qkl\right)\nukl(\sigma)\\
		&= \sum_{k\in\Z^n} \nukl(\sigma) \underbrace{\sum_{j\in\Z^n}\gamma\left(\Qjl\times \Qkl\right)}_{=\nu(\Qkl)}
		= \nu(\sigma),
	\end{align*}
where in the third equality, we have used that $\gamma\left(\Qjl\times \Qkl\right)=0$ if $\mu(\Qjl)=0$. Similarly, $\gamma_\ell(\sigma \times \R^n)= \mu(\sigma)$.
\end{proof}

The following Lemma will be useful for studying the limit $\ell \rightarrow 0^+$ of the block approximation.
\begin{lemma}
	\thlabel{thm:OTpartitionConvergence}
	Let $\{Q_i\}_{i \in I}$ be a countable partition of $\R^n$ into Borel sets with $\sup_{i \in I} \diam(Q_i) \leq C < \infty$, i.e.\ $|x-y|^2 \leq C^2$ for $x$, $y \in Q_i$ for any $i \in I$. Let $\mu$, $\nu \in \prob(\R^n)$ such that $\mu(Q_i) = \nu(Q_i)$ for all $i \in I$.
	Then $W^2(\mu,\nu) \leq C^2$.
\end{lemma}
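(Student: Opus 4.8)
The plan is to construct an explicit transport plan between $\mu$ and $\nu$ that moves mass only within each $Q_i$, so that the cost is bounded by $C^2$. Since $\mu(Q_i) = \nu(Q_i)$ for every $i$, we can handle each piece of the partition independently and then glue the results together.

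**First** I would discard the indices $i \in I$ with $\mu(Q_i) = \nu(Q_i) = 0$, since they contribute nothing. For each remaining index, define the conditional probability measures $\mu_i(\sigma) \eqdef \mu(\sigma \cap Q_i)/\mu(Q_i)$ and $\nu_i(\sigma) \eqdef \nu(\sigma \cap Q_i)/\nu(Q_i)$, both in $\probAll(\R^n)$ and both supported in $Q_i$. Choose \emph{any} coupling $\lambda_i \in \Pi(\mu_i,\nu_i)$ — for instance the product $\mu_i \otimes \nu_i$ — and note that $\lambda_i$ is supported in $Q_i \times Q_i$, so that $(c,\lambda_i) = \int_{Q_i \times Q_i} |x-y|^2\,d\lambda_i \leq C^2$ because $\diam(Q_i) \leq C$.

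**Next** I would assemble the global plan $\gamma \eqdef \sum_{i \in I} \mu(Q_i)\,\lambda_i$. One checks it is a Borel probability measure (the weights $\mu(Q_i) = \nu(Q_i)$ are non-negative and sum to $1$). Its first marginal is $\sum_i \mu(Q_i)\,\mu_i$, which restricted to any Borel set $\sigma$ gives $\sum_i \mu(\sigma \cap Q_i) = \mu(\sigma)$ since $\{Q_i\}$ partitions $\R^n$; similarly the second marginal is $\nu$. Hence $\gamma \in \Pi(\mu,\nu)$. Finally, using $\sum_i \mu(Q_i) = 1$,
\begin{equation*}
	W^2(\mu,\nu) \leq (c,\gamma) = \sum_{i \in I} \mu(Q_i)\,(c,\lambda_i) \leq \sum_{i \in I} \mu(Q_i)\,C^2 = C^2\,,
\end{equation*}
which is the claim. (One should also observe that $\gamma$ has finite cost, so in particular $W^2(\mu,\nu) < \infty$ and the infimum in \eqref{eq:W2} is over a nonempty set; this is automatic from the bound above.)

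**The only mildly delicate point** is the measure-theoretic bookkeeping: verifying that the countable sum $\gamma = \sum_i \mu(Q_i)\,\lambda_i$ defines a genuine Borel measure and that marginalization commutes with the sum. This is routine — it follows from monotone convergence applied to the increasing partial sums, using that each $\lambda_i$ is a finite measure concentrated on the disjoint block $Q_i \times Q_i$. There is no real analytic obstacle here; the lemma is essentially a soft consequence of the partition structure and the diameter bound.
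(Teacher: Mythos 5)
Your proof is correct and follows essentially the same route as the paper: normalize $\mu$ and $\nu$ on each nonnull cell $Q_i$, pick an arbitrary coupling of the normalized pieces (the paper also leaves $\gamma_i \in \Pi(\mu_i,\nu_i)$ arbitrary), glue them with weights $\mu(Q_i)$, and bound the cost of the resulting block-diagonal plan by $C^2$. The only difference is presentational — you name the product coupling as a concrete choice and spell out the marginal computation in slightly more detail.
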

\begin{proof}
	Denote by $\hat{I}$ the subset of $I$ such that $\mu(Q_i)=\nu(Q_i)>0$ for $i \in \hat{I}$. For $i \in \hat{I}$ and every Borel $\sigma \subset \R^n$ let
	\begin{align*}
		\mu_i(\sigma) & = \frac{\mu(\sigma \cap Q_i)}{\mu(Q_i)}
	\end{align*}
	and analogously define $\nu_i$. Clearly all $\mu_i$, $\nu_i \in \prob(\R^n)$, with support contained in $\ol{Q_i}$. For every $i \in \hat{I}$ let $\gamma_i \in \Pi(\mu_i,\nu_i)$ and have $\spt \gamma_i \subset \ol{Q_i}^2$ and thus
	\begin{align*}
		(c,\gamma_i) & = \int_{R^n \times \R^n} |x-y|^2\,d\gamma_i(x,y) = \int_{Q_i \times Q_i} |x-y|^2\,d\gamma_i(x,y) \leq C^2\,.
	\end{align*}
	One finds $\gamma = \sum_{i \in \hat{I}} \mu(Q_i)\,\gamma_i \in \Pi(\mu,\nu)$ and consequently
	\begin{align*}
		W^2(\mu,\nu) & \leq (c,\gamma) = \sum_{i \in \hat{I}} \mu(Q_i)\,(c,\gamma_i) \leq \sum_{i \in \hat{I}} \mu(Q_i)\, C^2 = C^2\,. \qedhere
	\end{align*}
\end{proof}

\begin{corollary}
	\thlabel{thm:OTConvergenceBlockConvergence}
	Let $\mu$, $\nu \in \probac(\R^n)$. For $\gamma \in \Pi(\mu,\nu)$ and its block approximation $\gamma_\ell$, we have
	\begin{align}
		\label{eq:OTgammaBlockConvergence}
		W^2(\gamma,\gamma_\ell) \leq 2\,n\,\ell^2
	\end{align}
	and $\gamma_\ell \rightarrow \gamma$ narrowly as $\ell \rightarrow 0^+$.
\end{corollary}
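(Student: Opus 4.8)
The plan is to apply \thref{thm:OTpartitionConvergence} directly to the pair $(\gamma,\gamma_\ell)$ in $\prob(\R^n\times\R^n)$, using the partition of $\R^n\times\R^n$ furnished by the cubes $\Qjl\times\Qkl$. First I would verify the hypotheses of that lemma: the family $\{\Qjl\times\Qkl\}_{(j,k)\in(\Z^n)^2}$ is a countable Borel partition of $\R^n\times\R^n$, and each such product of $n$-cubes of side $\ell$ has diameter $\sqrt{2n}\,\ell$, so $|X-Y|^2 \leq 2n\,\ell^2$ for $X$, $Y$ in a common cell. Second, I need that $\gamma$ and $\gamma_\ell$ assign equal mass to each cell: by the very construction of the block approximation, $\gamma_\ell(\Qjl\times\Qkl) = \gamma(\Qjl\times\Qkl)$ because $\mujl\otimes\nukl$ is a probability measure supported in $\Qjl\times\Qkl$ and the only term in the defining sum contributing mass to that cell is the $(j,k)$ term. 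Both $\gamma$ and $\gamma_\ell$ lie in $\prob(\R^n\times\R^n)$ since their marginals $\mu$, $\nu$ have finite second moments (Proposition~\ref{thm:OTConvergenceBlockMarginals}). \thref{thm:OTpartitionConvergence} then yields $W^2(\gamma,\gamma_\ell) \leq (\sqrt{2n}\,\ell)^2 = 2n\,\ell^2$, which is \eqref{eq:OTgammaBlockConvergence}.

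For the narrow convergence $\gamma_\ell \rightarrow \gamma$ as $\ell \rightarrow 0^+$, I would invoke the equivalence recorded in Definition~\ref{def:Topologies}: convergence in the Wasserstein distance implies narrow convergence. Since $W^2(\gamma,\gamma_\ell) \leq 2n\,\ell^2 \rightarrow 0$, the block approximations converge to $\gamma$ in the Wasserstein distance on $\prob(\R^n\times\R^n)$, hence narrowly. (If one prefers to stay within $\prob$ of the base space, one notes $2n$-dimensional Euclidean space is again Euclidean and the same equivalence applies.)

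I do not expect a genuine obstacle here; the corollary is essentially a bookkeeping consequence of the preceding lemma. The one point requiring a little care is the mass-matching claim $\gamma_\ell(\Qjl\times\Qkl) = \gamma(\Qjl\times\Qkl)$: one must observe that for $(j',k')\neq(j,k)$ the measure $\mu_{j'}^\ell\otimes\nu_{k'}^\ell$ is supported in $\overline{Q_{j'}^\ell}\times\overline{Q_{k'}^\ell}$ and therefore contributes no mass to the (half-open) cell $\Qjl\times\Qkl$, while the $(j,k)$ term contributes exactly $\gamma(\Qjl\times\Qkl)\cdot 1$. A secondary subtlety is ensuring the cells are genuinely disjoint and cover $\R^n\times\R^n$, which follows from the half-open convention $Q_k = [k_1,k_1+1)\times\cdots\times[k_n,k_n+1)$ in the definition of the block approximation. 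Everything else is immediate from \thref{thm:OTpartitionConvergence}.
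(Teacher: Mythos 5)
Your argument is correct and matches the paper's proof, which simply applies \thref{thm:OTpartitionConvergence} to the partition $\{Q_j^\ell \times Q_k^\ell\}_{(j,k)}$ of $\R^n \times \R^n$ (diameter $\sqrt{2n}\,\ell$) and then deduces narrow convergence from Wasserstein convergence via Definition~\ref{def:Topologies}. The additional details you supply — the mass-matching on each cell and the finiteness of the second moments of $\gamma$ and $\gamma_\ell$ — are exactly what the paper leaves implicit when it calls this ``a direct application.''
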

\begin{proof}
	Inequality \eqref{eq:OTgammaBlockConvergence} is a direct application of \thref{thm:OTpartitionConvergence}.
	This implies narrow convergence (see Def.~\ref{def:Topologies}).
\end{proof}

In this corollary, $W$ denotes the Wasserstein distance over $\R^{n \times n}$. Now we use convergence of $\gamma_\ell$ in the Wasserstein space over $\R^{n \times n}$ to show convergence of the transport cost induced by $\gamma_\ell$ as a coupling for the Wasserstein space over $\R^n$.
\begin{corollary}
	\thlabel{thm:BlockTransportCostConvergence}
	The transport cost of the block approximation converges:
	\begin{align}
		\lim_{\ell\to 0^+} (c,\gamma_\ell) =(c,\gamma)\,.
	\end{align}
\end{corollary}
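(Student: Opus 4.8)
The plan is to read off the convergence of the transport cost directly from the Wasserstein convergence $\gamma_\ell\to\gamma$ established in \thref{thm:OTConvergenceBlockConvergence}, by noticing that the cost function $c$ lies in the class of test functions that metrize the Wasserstein topology. Concretely, \thref{thm:OTConvergenceBlockConvergence} gives $W^2(\gamma,\gamma_\ell)\le 2n\ell^2\to 0$ as $\ell\to 0^+$, where $W$ denotes the Wasserstein distance over the base space $\R^n\times\R^n$ (of dimension $2n$). Note that all the measures involved genuinely lie in $\prob(\R^n\times\R^n)$: since $\gamma\in\Pi(\mu,\nu)$ and, by \thref{thm:OTConvergenceBlockMarginals}, each $\gamma_\ell\in\Pi(\mu,\nu)$ as well, they share the common finite second moment $M_{\R^n\times\R^n}(\gamma)=M_{\R^n\times\R^n}(\gamma_\ell)=M_{\R^n}(\mu)+M_{\R^n}(\nu)<\infty$, using $\mu,\nu\in\probac(\R^n)$.

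The second step is to check that $c(x,y)=|x-y|^2$, viewed as a function of $(x,y)\in\R^n\times\R^n$, is continuous with at most quadratic growth at infinity, which is immediate from $|x-y|^2\le 2|x|^2+2|y|^2=2\,|(x,y)|^2$. By the third characterization of Wasserstein convergence recalled in Definition~\ref{def:Topologies} --- convergence in $W$ is equivalent to convergence of integrals against every continuous test function of at most quadratic growth --- applying this characterization to the test function $c$ yields $(c,\gamma_\ell)=\int c\,d\gamma_\ell\to\int c\,d\gamma=(c,\gamma)$, which is exactly the claim.

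I do not anticipate any real obstacle here; the only points requiring a little care are the dimensional bookkeeping (one works on $\R^n\times\R^n$, with the harmless factor $2$ in the quadratic bound on $c$) and the verification that the measures have finite second moments so that the Wasserstein distance and its characterization apply. As an alternative to invoking the third characterization, one could instead combine the narrow convergence $\gamma_\ell\to\gamma$ (also provided by \thref{thm:OTConvergenceBlockConvergence}) with the exact identity $M_{\R^n\times\R^n}(\gamma_\ell)=M_{\R^n\times\R^n}(\gamma)$ to obtain convergence of the second moments, hence $W$-convergence, and then conclude as above.
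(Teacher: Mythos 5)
Your argument is correct and matches the paper's own proof: both invoke the Wasserstein-distance bound from \thref{thm:OTConvergenceBlockConvergence}, observe that $c(x,y)=|x-y|^2$ grows at most quadratically, and conclude via the third characterization of Wasserstein convergence in Definition~\ref{def:Topologies}. Your additional bookkeeping (finiteness of second moments, the dimension $2n$, and the remark that the second moments of $\gamma_\ell$ and $\gamma$ agree exactly) is sound but not a different route, just a slightly more explicit version of the same one-line argument.
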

\begin{proof}
	The function $c : \R^{n \times n} \rightarrow \R$, $(x,y) \mapsto |x-y|^2$ is bounded by $2(|x|^2 + |y|^2)$. Therefore, $W^2(\gamma_\ell,\gamma) \rightarrow 0$ implies convergence of integration w.r.t. $c$ (see Def.~\ref{def:Topologies}).
\end{proof}
%
%
%

\begin{proposition}[Bounding Entropy of Block Approximation]
	\thlabel{thm:OTConvergenceBlockEntropyBound}
	There are constants $C>0$ and $\alpha \in (0,1)$ such that the entropy of the block approximation $\gamma_\ell$ of $\gamma \in \Pi(\mu,\nu)$ at scale $\ell>0$ is bounded by
	\begin{align}
    H_2(\gamma_\ell) \leq H_1(\mu) + H_1(\nu) + C \left( \vphantom{\sum} (M(\mu) + n\,\correc{\ell^2} + 1)^\alpha + (M(\nu)+n\,\correc{\ell^2} + 1)^\alpha - 2\,n\,\log(\ell) \right)\,.
	\end{align}
\end{proposition}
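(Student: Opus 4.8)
The plan is to compute $H_2(\gamma_\ell)$ almost exactly, leaving a single ``discrete mutual information'' term that must be estimated. Throughout, for $(j,k)\in(\Z^n)^2$, abbreviate $\mu_j\eqdef\mu(Q_j^\ell)$, $\nu_k\eqdef\nu(Q_k^\ell)$, $p_{jk}\eqdef\gamma(Q_j^\ell\times Q_k^\ell)$, and recall from $\gamma\in\Pi(\mu,\nu)$ the marginal identities $\sum_k p_{jk}=\mu_j$ and $\sum_j p_{jk}=\nu_k$. We may assume $M(\mu),M(\nu)<\infty$, as otherwise the right-hand side is $+\infty$.

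First I would exploit the explicit density of the block approximation: on a block $Q_j^\ell\times Q_k^\ell$ with $\mu_j,\nu_k>0$ one has $\log\gamma_\ell(x,y)=\log\frac{p_{jk}}{\mu_j\nu_k}+\log\mu(x)+\log\nu(y)$ for $\gamma_\ell$-a.e.\ $(x,y)$, while $\gamma_\ell=0$ on all other blocks. Integrating against $\gamma_\ell$ block by block, using $\int_{Q_j^\ell}\mu=\mu_j$, $\int_{Q_k^\ell}\nu=\nu_k$ and the marginal identities to collapse the double sums, yields
\[
  H_2(\gamma_\ell)=\underbrace{\sum_{(j,k)\in(\Z^n)^2}p_{jk}\log\frac{p_{jk}}{\mu_j\nu_k}}_{=:\,I}+H_1(\mu)+H_1(\nu).
\]
The term-by-term integration is legitimate because all negative parts involved are integrable: $M(\gamma_\ell)=M(\mu)+M(\nu)<\infty$ gives $H_2^-(\gamma_\ell)<\infty$ by \thref{thm:EntropyMomentBound}, and $H_1^\pm(\mu),H_1^\pm(\nu)<\infty$. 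Since $p_{jk}\le\gamma(\R^n\times\R^n)=1$ we have $\log p_{jk}\le 0$, so $\log\frac{p_{jk}}{\mu_j\nu_k}\le-\log\mu_j-\log\nu_k$, and the marginal identities give
\[
  I\le-\sum_j\mu_j\log\mu_j-\sum_k\nu_k\log\nu_k=S_\ell(\mu)+S_\ell(\nu),\qquad S_\ell(\rho)\eqdef-\sum_j\rho(Q_j^\ell)\log\rho(Q_j^\ell),
\]
the discrete entropies of $\mu$ and $\nu$ for the cube partition at scale $\ell$.

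The heart of the argument is then to bound $S_\ell(\rho)$ for $\rho\in\prob(\R^n)$. I would use the elementary consequence of Jensen's inequality that, for any probability vector $(p_j)$ and any weights $q_j>0$, $-\sum_j p_j\log p_j\le\log\!\big(\sum_j q_j\big)-\sum_j p_j\log q_j$, applied with the \emph{Gaussian} reference weights $q_j\eqdef e^{-\beta|c_j|^2}$, where $c_j$ is the centre of $Q_j^\ell$ and $\beta>0$ is free. Since $|c_j|^2=\ell^2\sum_i(j_i+\frac12)^2$, the sum $\sum_j q_j$ factorises into one-dimensional Gaussian sums and is $\le(C_0/\sqrt{\beta\ell^2})^n$ whenever $\beta\ell^2\le1$, so $\log\sum_j q_j\le nC_0'-\frac n2\log\beta-n\log\ell$; and since $|c_j|^2\le2|x|^2+\frac n2\ell^2$ for $x\in Q_j^\ell$, we get $\sum_j\rho(Q_j^\ell)|c_j|^2\le2M(\rho)+\frac n2\ell^2$. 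Choosing $\beta=\frac n2\big(2M(\rho)+\frac n2\ell^2\big)^{-1}$ (which indeed satisfies $\beta\ell^2\le1$) and optimising produces
\[
  S_\ell(\rho)\le C_n+\frac n2\log\!\big(M(\rho)+n\ell^2+1\big)-n\log\ell .
\]
Substituting $\rho=\mu,\nu$ into $H_2(\gamma_\ell)=I+H_1(\mu)+H_1(\nu)\le S_\ell(\mu)+S_\ell(\nu)+H_1(\mu)+H_1(\nu)$, and using $\log t\le C_\alpha t^\alpha$ for $t\ge1$ (any fixed $\alpha\in(0,1)$) to convert $\frac n2\log(M(\cdot)+n\ell^2+1)$ into a multiple of $(M(\cdot)+n\ell^2+1)^\alpha\ge1$ — into which the additive dimensional constants are absorbed, enlarging $C$ if necessary to also cover the range $\ell\ge1$ — gives the stated inequality.

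The hard part will be the discrete-entropy bound on $S_\ell(\rho)$: one must capture that it diverges only \emph{logarithmically} in $1/\ell$ as $\ell\to0^+$ (a cruder Hölder-type splitting into near/far cubes produces a spurious polynomial blow-up in $1/\ell$ and misses the $-2n\log\ell$ term) and only logarithmically in $M(\rho)$. This is exactly what forces the Gaussian choice of reference weights together with the optimisation of the inverse temperature $\beta$; the coefficient $2n$ in the statement comes from the two copies $S_\ell(\mu)+S_\ell(\nu)$. The decomposition in Step~1 is conceptually routine but needs the small care, noted above, of justifying the term-by-term integration through finiteness of the negative parts.
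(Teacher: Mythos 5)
Your overall plan matches the paper's: you decompose $H_2(\gamma_\ell)$ block by block, identify the exact identity
\[
  H_2(\gamma_\ell)=\sum_{j,k}p_{jk}\log\frac{p_{jk}}{\mu_j\nu_k}+H_1(\mu)+H_1(\nu),
\]
then discard the nonpositive $\sum p_{jk}\log p_{jk}$ to reach $H_2(\gamma_\ell)\le H_1(\mu)+H_1(\nu)-\sum_j\mu_j\log\mu_j-\sum_k\nu_k\log\nu_k$, which is precisely the paper's intermediate inequality. Where you genuinely diverge is in bounding the discrete marginal entropy $S_\ell(\rho)=-\sum_j\rho(Q_j^\ell)\log\rho(Q_j^\ell)$. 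The paper's Lemma~\ref{thm:DiscreteEntropyBound} rewrites $S_\ell(\rho)=-H_1(\rho_\ell)-n\log\ell$ for the piecewise-constant density $\rho_\ell$, then invokes the continuous entropy-moment bound of Corollary~\ref{thm:EntropyMomentBound} (itself proved by H\"older in the appendix) together with the Wasserstein estimate $W^2(\rho,\rho_\ell)\le n\ell^2$ and~\eqref{eq:W2MomentBound} to transfer the bound from $M(\rho_\ell)$ back to $M(\rho)$. You instead apply Jensen's inequality directly at the discrete level with Gaussian reference weights $q_j=e^{-\beta|c_j|^2}$, estimate the theta-like normalization $\sum_j q_j\lesssim(\beta\ell^2)^{-n/2}$, and optimize the inverse temperature $\beta$, obtaining the sharper $S_\ell(\rho)\le C_n+\frac n2\log(M(\rho)+n\ell^2+1)-n\log\ell$ before weakening $\log$ to a power $\alpha$. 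Both routes work and produce the same $-n\log\ell$ term; the paper's is more economical in that it recycles an estimate already needed elsewhere, while yours is self-contained at the discrete level, avoids a detour through $W^2(\rho,\rho_\ell)$, and actually gives a logarithmic rather than polynomial dependence on the second moment. The only places to tighten would be the measurability/summability justification for the block-wise split (which you flag), and the routine constant-chasing needed because the statement puts the overall constant $C$ in front of $-2n\log\ell$ as well, which for $\ell>1$ requires absorbing the discrepancy into the $(M+n\ell^2+1)^\alpha$ terms — again you acknowledge this, and it is genuinely harmless.
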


\begin{proof}
	We bound the entropy of $\gamma_\ell$ by 
\begin{align*}
	H_2(\gamma_\ell) & = \sum_{
		\substack{(j,k)\in(\Z^n)^2:\\
			\mu(\Qjl)>0,\,\nu(\Qkl)>0}}
		\int_{\Qjl\times \Qkl} \gamma(\Qjl \times \Qkl) \frac{\mu(x)\,\nu(y)}{\mu(\Qjl)\,\nu(\Qkl)}
			\log\left(\gamma(\Qjl \times \Qkl)\frac{\mu(x)\,\nu(y)}{\mu(\Qjl)\,\nu(\Qkl)}\right)
			dx\,dy \\
	& = \sum_{\substack{(j,k)\in(\Z^n)^2:\\\mu(\Qjl)>0,\,\nu(\Qkl)>0}}
		\gamma(\Qjl \times \Qkl) \left[
		\underbrace{\log\left(\gamma(\Qjl\times\Qkl)\right)}_{\leq 0}
		+ \int_{\Qjl} \frac{\mu(x)}{\mu(\Qjl)} \log\left(\frac{\mu(x)}{\mu(\Qjl)}\right) dx \right. \\ 
	& \qquad \qquad \left.
		+ \int_{\Qkl} \frac{\nu(y)}{\nu(\Qkl)} \log\left(\frac{\nu(x)}{\nu(\Qkl)}\right) dy \right] \\
	& \leq H_1(\mu) + H_1(\nu) - \sum_{j\in\Z^n} \mu(\Qjl) \log\left(\mu(\Qjl)\right)
		- \sum_{k\in\Z^n} \nu(\Qkl)\log\left(\nu(\Qkl)\right).
\end{align*}
The result follows from using the subsequent \thref{thm:DiscreteEntropyBound} on the last two terms.
\end{proof}

\begin{lemma}
	\thlabel{thm:DiscreteEntropyBound}
	For there exist constants $C > 0$, $\alpha \in (0,1)$ such that for $\mu \in \prob(\R^n)$
	\begin{equation}
		\sum_{j\in\Z^n} \mu(\Qjl) \log\left(\mu(\Qjl)\right) \geq -C (M(\mu) + n\,\ell^2 + 1)^\alpha + n\,\log(\ell)\,.
	\end{equation}
\end{lemma}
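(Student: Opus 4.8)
Write $p_j \eqdef \mu(Q_j^\ell)$ for $j \in \Z^n$, so that $(p_j)_{j\in\Z^n}$ is a probability vector, and set $A \eqdef M(\mu) + n\,\ell^2 + 1 \geq 1$; the quantity to bound from below is the signed discrete entropy $\sum_{j\in\Z^n} p_j \log p_j$. The plan is to compare this discrete entropy to that of a suitably scaled discrete Gaussian on the lattice $\ell\Z^n$ via non-negativity of the relative entropy, and then to control the resulting ``variance'' term by the second moment $M(\mu)$.

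Concretely, I would first record two elementary estimates. (i) If $x \in Q_j^\ell$ then every coordinate obeys $|\ell j_i| \leq |x_i| + \ell$, whence $|\ell j|^2 \leq 2|x|^2 + 2n\ell^2$; integrating against $\mu$ yields $\sum_{j} p_j\,|\ell j|^2 \leq 2M(\mu) + 2n\ell^2 \leq 2A$. (ii) Comparing a sum with an integral, $\sum_{m\in\Z} e^{-t m^2} \leq 1 + \sqrt{\pi/t}$ for every $t>0$, so by tensorization $Z_\beta \eqdef \sum_{j\in\Z^n} e^{-\beta\ell^2|j|^2} \leq \bigl(1 + \sqrt{\pi}/(\sqrt{\beta}\,\ell)\bigr)^n$ for every $\beta>0$.

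Next, for a free parameter $\beta>0$ set $q_j \eqdef e^{-\beta\ell^2|j|^2}/Z_\beta$. Since $(q_j)$ is a probability vector, $\sum_j p_j \log(p_j/q_j) \geq 0$, so
\begin{equation*}
  \sum_{j\in\Z^n} p_j \log p_j \;\geq\; \sum_{j\in\Z^n} p_j \log q_j \;=\; -\beta \sum_{j} p_j\,|\ell j|^2 - \log Z_\beta \;\geq\; -2\beta A - \log Z_\beta .
\end{equation*}
Writing $1 + \sqrt{\pi}/(\sqrt\beta\,\ell) = (\sqrt\beta\,\ell + \sqrt\pi)/(\sqrt\beta\,\ell)$ and inserting estimate (ii), the term $-\log Z_\beta$ splits off a clean contribution $n\log\ell$:
\begin{equation*}
  \sum_{j\in\Z^n} p_j \log p_j \;\geq\; -2\beta A + \frac{n}{2}\log\beta + n\log\ell - n\log\!\bigl(\sqrt\beta\,\ell + \sqrt\pi\bigr).
\end{equation*}

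Finally I would fix any $\alpha\in(0,1)$ and take $\beta = A^{\alpha-1}\leq 1$. Then $2\beta A = 2A^\alpha$, $\frac{n}{2}\log\beta = \frac{n}{2}(\alpha-1)\log A \leq 0$, and, since $\ell^2 \leq A/n$ and $A\geq 1$, one gets $\sqrt\beta\,\ell \leq \ell \leq \sqrt{A} \leq A$, hence $\sqrt\beta\,\ell + \sqrt\pi \leq (1+\sqrt\pi)A$ and $-n\log(\sqrt\beta\,\ell+\sqrt\pi) \geq -n\log(1+\sqrt\pi) - n\log A$. All leftover terms are now either constants or multiples of $-\log A$, and using $0 \leq \log A \leq C_\alpha A^\alpha$ on $[1,\infty)$ (with $C_\alpha \eqdef \sup_{s\geq 1} s^{-\alpha}\log s < \infty$) they are absorbed into $-C A^\alpha$ for a constant $C = C(n,\alpha)$, leaving exactly the asserted $+\,n\log\ell$; recalling $A = M(\mu)+n\ell^2+1$ finishes the proof. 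There is no genuine analytic obstacle here: the only point requiring care is the bookkeeping that isolates the $\log\ell$ term, and the exponent $\alpha$ must be taken strictly below $1$ precisely so that the choice $\beta = A^{\alpha-1}$ keeps every remaining term $O(A^\alpha)$.
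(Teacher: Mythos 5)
Your argument is correct and takes a genuinely different route from the paper's. The paper reduces to the continuous setting: it introduces the piecewise-constant density $\mu_\ell(x) = \mu(\Qjl)\,\ell^{-n}$ for $x \in \Qjl$, observes that $\sum_j \mu(\Qjl)\log\mu(\Qjl) = H_1(\mu_\ell) + n\log\ell$, invokes the already-established moment--entropy bound $H_1(\mu_\ell) \geq -C(M(\mu_\ell)+1)^\alpha$ (\thref{thm:EntropyMomentBound}, itself proved via H\"older), and transfers from $M(\mu_\ell)$ to $M(\mu)$ using the Wasserstein estimate $W^2(\mu,\mu_\ell) \leq n\ell^2$ from \thref{thm:OTpartitionConvergence} combined with \eqref{eq:W2MomentBound}. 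You instead stay entirely in the discrete setting and argue via the Gibbs variational inequality: nonnegativity of the relative entropy of $(p_j)$ against a discrete Gaussian $q_j \propto e^{-\beta\ell^2|j|^2}$, a sum-to-integral bound on the partition function $Z_\beta$, and an optimization over the inverse temperature $\beta = A^{\alpha-1}$ with $A = M(\mu)+n\ell^2+1$. Both routes are sound. The paper's version is shorter because it reuses machinery built for the rest of the argument; yours is self-contained, avoids the detour through $W_2$, and delivers the bound for every $\alpha \in (0,1)$ rather than the restricted range $\alpha \in (\tfrac{n}{n+2},1)$ that the H\"older step in \thref{thm:muAlphaBound} imposes.
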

\begin{proof}
	Consider the density $\mu_\ell(x) = \mu(\Qjl)\,\ell^{-n}$, $j$ uniquely determined by $x \in \Qjl$. Clearly $\mu_\ell(\Qjl) = \mu(\Qjl)$. We have
	\begin{align*}
		\sum_{j\in\Z^n} \mu(\Qjl) \log\left(\mu(\Qjl)\right) & = \int_{\R^n} \mu_\ell(x) \log\left(\mu_\ell(x) \cdot \ell^n\right) \\
		& = H_1(\mu_\ell) + n\,\log(\ell) \geq -C (M(\mu_\ell) + 1)^\alpha + n\,\log(\ell)
	\end{align*}
	for suitable constants $C>0$, $\alpha \in (0,1)$ by virtue of \thref{thm:EntropyMomentBound}.
	Since $\diam(\Qjl)=\sqrt{n}\,\ell$, for all $j \in \Z^n$ have $W^2(\mu,\mu_\ell) \leq n\,\ell^2$ by \thref{thm:OTpartitionConvergence} and thus the conclusion follows from \eqref{eq:W2MomentBound} (with an adjusted constant $C$).
\end{proof}

We now summarize the previous results:
\begin{proposition}[Limsup Condition]
	\thlabel{thm:OTGammaLimsup}
	Given the set-up of Definition \ref{def:OTConvergenceSetup}, for every $\gamma \in \probAll(\R^n \times \R^n)$ there is a non-negative sequence $(\ell_k)_{k \in \N}$ converging to zero, such that
	\begin{align}
		\mc{F}(\gamma) \geq \limsup_{k \rightarrow \infty} \mc{F}_{k}(\gamma_{\ell_k})\,.
	\end{align}
\end{proposition}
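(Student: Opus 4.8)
The plan is to construct the recovery sequence as a diagonal extraction from the block approximations, exploiting the fact that the block approximation simultaneously controls the transport cost (hence $\mc{F}$) and the entropy (hence the regularization term). First I would handle the trivial case: if $\gamma \notin \Pi(\mu,\nu)$ then $\mc{F}(\gamma) = \infty$ and any sequence with $\gamma_{\ell_k} \to \gamma$ works — but actually one should take $\gamma$ to be a fixed admissible plan when $\gamma \in \Pi(\mu,\nu)$, and for $\gamma \notin \Pi(\mu,\nu)$ simply pick any recovery sequence (e.g. the constant sequence, since both sides are $+\infty$). So the real content is for $\gamma \in \Pi(\mu,\nu)$.

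For $\gamma \in \Pi(\mu,\nu)$, consider the block approximation $\gamma_\ell$ at scale $\ell > 0$. By \thref{thm:OTConvergenceBlockMarginals}, $\gamma_\ell \in \Pi(\mu,\nu)$, so $\mc{F}_k(\gamma_\ell) = (c,\gamma_\ell) + \veps_k H_2(\gamma_\ell)$. By \thref{thm:BlockTransportCostConvergence}, $(c,\gamma_\ell) \to (c,\gamma) = \mc{F}(\gamma)$ as $\ell \to 0^+$. By \thref{thm:OTConvergenceBlockEntropyBound}, $H_2(\gamma_\ell)$ is bounded above by a quantity of the form $H_1(\mu) + H_1(\nu) + C\big((M(\mu)+n\ell^2+1)^\alpha + (M(\nu)+n\ell^2+1)^\alpha - 2n\log \ell\big)$, which for $\ell \le 1$ (say) is bounded by $A - 2nC\log\ell$ for a constant $A$ depending only on $\mu,\nu,n$. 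The key point is that this upper bound diverges only logarithmically as $\ell \to 0^+$, so it can be absorbed by any sequence $\veps_k \to 0$ provided $\ell_k$ is chosen to go to zero slowly enough relative to $\veps_k$.

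Concretely, I would choose $\ell_k$ as follows. Set $\delta_k = \inf_{\ell \in (0,1/k]} \big[\,|(c,\gamma_\ell) - (c,\gamma)| + \veps_k(A - 2nC\log\ell)\,\big]$ — more simply, since $(c,\gamma_\ell) \to (c,\gamma)$, pick for each $k$ a scale $\ell_k \in (0, \min\{1/k,\, \sqrt{\veps_k}\}]$ (or more robustly $\ell_k \le \exp(-1/\sqrt{\veps_k})^{-1}$ type bound — but the clean choice is $\ell_k \to 0$ with $\veps_k |\log \ell_k| \to 0$, e.g. $\ell_k = \max\{1/k, e^{-1/\sqrt{\veps_k}}\}$ when $\veps_k > 0$, and $\ell_k = 1/k$ when $\veps_k = 0$) such that additionally $|(c,\gamma_{\ell_k}) - (c,\gamma)| \le 1/k$. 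Then $\gamma_{\ell_k} \to \gamma$ narrowly by \thref{thm:OTConvergenceBlockConvergence} (since $\ell_k \to 0$), and
\[
  \limsup_{k\to\infty} \mc{F}_k(\gamma_{\ell_k})
  = \limsup_{k\to\infty} \Big( (c,\gamma_{\ell_k}) + \veps_k H_2(\gamma_{\ell_k}) \Big)
  \le (c,\gamma) + \limsup_{k\to\infty} \veps_k (A - 2nC\log \ell_k)
  = (c,\gamma) = \mc{F}(\gamma),
\]
using $\veps_k \to 0$ and $\veps_k |\log\ell_k| \to 0$ by the choice of $\ell_k$ (and $\veps_k \cdot A \to 0$ trivially). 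When $H_2(\gamma_{\ell_k})$ happens to be negative this only helps, so the upper bound on $H_2$ is all that is needed. This yields the claim.

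The main obstacle — though it is more bookkeeping than genuine difficulty — is making the joint choice of $\ell_k$ precise so that both requirements hold simultaneously: $\ell_k \to 0$ (for narrow convergence and cost convergence) and $\veps_k \log \ell_k \to 0$ (to kill the entropy blow-up). This is possible precisely because the entropy bound of \thref{thm:OTConvergenceBlockEntropyBound} diverges only like $-\log\ell$; any polynomial divergence in $1/\ell$ would still be fine, but a worse blow-up could in principle compete with a too-fast $\veps_k$. Since here we get to choose $\ell_k$ after $\veps_k$ is given, and the divergence is mild, a diagonal argument always succeeds. One should also double-check the degenerate case $\veps_k = 0$ (if the sequence is allowed to contain zeros): then $\mc{F}_k = \mc{F}$ on $\Pi(\mu,\nu)$ and any $\ell_k \to 0$ works by \thref{thm:BlockTransportCostConvergence} alone.
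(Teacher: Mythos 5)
Your proof is correct and follows essentially the same route as the paper's: handle $\gamma\notin\Pi(\mu,\nu)$ trivially, then use the block approximation, the entropy bound of \thref{thm:OTConvergenceBlockEntropyBound}, and the cost convergence of \thref{thm:BlockTransportCostConvergence}, choosing $\ell_k$ so that $\veps_k\log\ell_k\to 0$. The paper simply takes $\ell_k=\veps_k$ (since $\veps_k\log\veps_k\to 0$); your more elaborate construction also works, though note that your first tentative choice $\ell_k\in(0,\min\{1/k,\sqrt{\veps_k}\}]$ alone does not control $\veps_k|\log\ell_k|$ from above (taking $\ell_k$ much smaller than $\sqrt{\veps_k}$ would reintroduce the blow-up), a point you correctly flag and repair in the final formula.
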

\begin{proof}
Note first that for any non-negative sequence $(\ell_k)_{k}$ converging to zero, $\gamma_{\ell_k} \rightarrow \gamma$ narrowly (\thref{thm:OTConvergenceBlockConvergence}).
Again, for $\gamma \notin \Pi(\mu,\nu)$ the condition holds trivially. So in the following let $\gamma \in \Pi(\mu,\nu)$ and consequently $\gamma_{\ell_k} \in \Pi(\mu,\nu)$ (\thref{thm:OTConvergenceBlockMarginals}).

By virtue of \thref{thm:OTConvergenceBlockEntropyBound}, we have
\begin{align*}
	H_2(\gamma_{\ell_k}) \leq H_1(\mu) + H_1(\nu) + C \left( \vphantom{\sum} (M(\mu) + n\,\correc{\ell_k^2} + 1)^\alpha + (M(\nu)+n\, \correc{\ell_k^2} + 1)^\alpha - 2\,n\,\log(\ell_k) \right)
\end{align*}
and thus for $\ell_k=\veps_k$ (implying $\veps_k\,\log(\ell_k) \to 0$ as $k \to \infty$) we have $\correc{\limsup_{k\to +\infty}\veps_k\,H_2(\gamma_{\ell_k})\leq 0}$. The claim then follows from convergence of the transport cost (\thref{thm:BlockTransportCostConvergence}).
\end{proof}


\subsection{Application to Wasserstein Barycenters}

Entropy regularization can also be applied to obtain approximate solutions to more complicated optimization problems involving optimal transport. As an example we sketch application to Wasserstein barycenters, as initially defined in~\cite{Carlier_wasserstein_barycenter} in the un-regularized case.

\begin{definition}[Barycenter Problem]
	Let $(\mu_i)_{i=1}^N \in \prob(\R^n)^N$ be a tuple of marginals with finite entropy and let $(\veps_k)_{k \in \N}$ be a non-negative sequence converging to zero. Denote by
	\begin{align}
		\Pi((\mu_i)_i) = \left\{ (\gamma_i)_{i=1}^N \in \prob(\R^n \times \R^n)^N \,\colon
			\exists\, \rho \in \prob(\R^n) \tn{ such that } \gamma_i \in \Pi(\mu_i,\rho) \tn{ for } i=1,\ldots,N \right\}
	\end{align}
	the set of couplings from $(\mu_i)_i$ to a common second marginal (which is not fixed). Analogous to Definition \ref{def:OTConvergenceSetup} introduce
	\begin{align}
		\mc{G}_k: & \prob(\R^n \times \R^n)^N \mapsto \R \cup \{\infty\}, &
			(\gamma_i)_i & \mapsto \begin{cases}
				\sum_{i=1}^N (c,\gamma_i) + \veps_k \, H_2(\gamma_i) & \tn{if } (\gamma_i)_i \in \Pi((\mu_i)_i) \\
				\infty & \tn{else,}
				\end{cases} \\
		\intertext{and}
		\mc{G}: & \prob(\R^n \times \R^n)^N \mapsto \R \cup \{\infty\}, &
			(\gamma_i)_i & \mapsto \begin{cases}
				\sum_{i=1}^N (c,\gamma_i) & \tn{if } (\gamma_i)_i \in \Pi((\mu_i)_i) \\
				\infty & \tn{else.}
				\end{cases}		
	\end{align}
	The Wasserstein barycenter of the tuple $(\mu_i)_i$ can be obtained by optimizing $\mc{G}$, it is given by the common second marginals of the optimal $(\gamma_i)_i$.
\end{definition}

\begin{proposition}[Existence and Convergence of Barycenter]\label{prop-barycenters}
	Minimizers of the functionals $\mc{G}_k$ and $\mc{G}$ exist and $\mc{G}_k$ $\Gamma$-converges to $\mc{G}$ w.r.t.\ the narrow topology.
\end{proposition}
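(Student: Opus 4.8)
The plan is to follow the three steps used for \thref{thm:OTGammaConvergence} — existence of minimizers, the liminf condition, the limsup condition — the only genuinely new ingredient being the treatment of the free common second marginal $\rho$.

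\textbf{Existence.} Fix $k$ and let $((\gamma_i^m)_{i=1}^N)_{m\in\N}$ be a minimizing sequence for $\mc{G}_k$, with $\gamma_i^m\in\Pi(\mu_i,\rho^m)$. Since $\mu_i\otimes\rho_0\in\Pi(\mu_i,\rho_0)$ for any fixed $\rho_0\in\probac(\R^n)$ with finite entropy and second moment, $\inf\mc{G}_k<\infty$, so we may assume $\mc{G}_k((\gamma_i^m)_i)\le C_0<\infty$. From $|y|^2\le 2|x|^2+2|x-y|^2$ one gets $(c,\gamma_i^m)\ge \frac12 M(\rho^m)-M(\mu_i)$, and from \thref{thm:EntropyMomentBound} (applied on $\R^n\times\R^n$) $H_2(\gamma_i^m)\ge -C(M(\mu_i)+M(\rho^m)+1)^\alpha$ with $\alpha\in(0,1)$. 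Substituting both into $\sum_i\big[(c,\gamma_i^m)+\veps_k H_2(\gamma_i^m)\big]\le C_0$ yields a bound of the form $\frac{N}{2}M(\rho^m)\le C_1+C_2(M(\rho^m)+C_3)^\alpha$, which because $\alpha<1$ forces $\sup_m M(\rho^m)<\infty$. Hence $\{\rho^m\}_m$ is tight; since each $\gamma_i^m$ has the fixed first marginal $\mu_i$ and second marginal $\rho^m$, the family $\{\gamma_i^m\}_m$ is tight as well, and after extracting a subsequence $\gamma_i^m\to\gamma_i$ narrowly for every $i$. Passing the marginal relations to the limit and using lower semicontinuity of the second moment, $(\gamma_i)_i\in\Pi((\mu_i)_i)$, and narrow lower semicontinuity of $\gamma\mapsto(c,\gamma)$ together with that of $H_2$ under uniformly bounded second moments (\thref{cor:fctLSC}) shows that $(\gamma_i)_i$ minimizes $\mc{G}_k$. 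The case of $\mc{G}$ is identical and simpler, as $\sum_i(c,\gamma_i^m)\le C_0$ directly bounds $M(\rho^m)$.

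\textbf{Liminf condition.} Let $(\gamma_i^k)_i\to(\gamma_i)_i$ narrowly. Pass to a subsequence along which $\mc{G}_k((\gamma_i^k)_i)$ converges to the liminf; if this value is $+\infty$ there is nothing to prove, so assume it is finite, whence $(\gamma_i^k)_i\in\Pi((\mu_i)_i)$ for large $k$ and, exactly as above, $\sup_k M(\rho^k)<\infty$, so the limit lies in $\Pi((\mu_i)_i)$. Using $\liminf(a_k+b_k)\ge\liminf a_k+\liminf b_k$, narrow lower semicontinuity of each $(c,\cdot)$, and $\veps_k\sum_i H_2(\gamma_i^k)\to 0$ (the entropies being bounded below uniformly in $k$ by the moment bound, and $\veps_k\to 0$), we obtain $\liminf_k\mc{G}_k((\gamma_i^k)_i)\ge\sum_i(c,\gamma_i)=\mc{G}((\gamma_i)_i)$.

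\textbf{Limsup condition.} Let $(\gamma_i)_i\in\Pi((\mu_i)_i)$ with common second marginal $\rho$ (otherwise $\mc{G}((\gamma_i)_i)=\infty$ and the claim is trivial). Since $\rho$ need not have finite entropy, I would not use the plain block approximation but a variant in which the second marginal is additionally discretized: keeping the cubes $\Qjl,\Qkl$ as in the block approximation, set
\begin{align*}
  \gamma_{i,\ell} \eqdef \sum_{(j,k)\in(\Z^n)^2} \gamma_i\left(\Qjl\times\Qkl\right)\big( \mu_{i,j,\ell}\otimes \ell^{-n}\mathbf{1}_{\Qkl} \big),
\end{align*}
where $\mu_{i,j,\ell}$ is the normalized restriction of $\mu_i$ to $\Qjl$. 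Then $\gamma_{i,\ell}$ has first marginal $\mu_i$ and second marginal the \emph{common}, piecewise constant measure $\rho_\ell$ equal to $\rho(\Qkl)\,\ell^{-n}$ on each $\Qkl$; hence $(\gamma_{i,\ell})_i\in\Pi((\mu_i)_i)$. Moreover $\gamma_{i,\ell}$ and $\gamma_i$ assign the same mass to every cube $\Qjl\times\Qkl$, so \thref{thm:OTpartitionConvergence} gives $W^2(\gamma_{i,\ell},\gamma_i)\le 2n\ell^2$, whence $\gamma_{i,\ell}\to\gamma_i$ narrowly and $(c,\gamma_{i,\ell})\to(c,\gamma_i)$ as in \thref{thm:BlockTransportCostConvergence}. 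Reproducing the computation of \thref{thm:OTConvergenceBlockEntropyBound} with $\ell^{-n}\mathbf{1}_{\Qkl}$ in place of the normalized restriction of the second marginal, the $y$-block contributes only the term $-n\log\ell$ — so $H_1(\rho)$ never appears, which is precisely why discretizing the \emph{free} marginal is harmless — giving $H_2(\gamma_{i,\ell})\le H_1(\mu_i)+C(M(\mu_i)+n\ell^2+1)^\alpha-2n\log\ell$. Taking $\ell=\veps_k$ makes $\veps_k H_2(\gamma_{i,\veps_k})\to 0$, and therefore $\limsup_k \mc{G}_k((\gamma_{i,\veps_k})_i)\le\sum_i(c,\gamma_i)=\mc{G}((\gamma_i)_i)$.

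Together with equi-coercivity of $(\mc{G}_k)_k$ — which follows from tightness of $\Pi((\mu_i)_i)$ (the first marginals being fixed) together with the uniform moment bound on $\rho$ established in the first step — this proves the stated $\Gamma$-convergence and, as in \thref{thm:OTGammaConvergence}, convergence of minimizers. \textbf{The main obstacle} is the double role of the free marginal $\rho$: it must be confined to a tight set along minimizing and recovery sequences (controlled by the moment estimate, which works only because $\alpha<1$ in \thref{thm:EntropyMomentBound}), while its possibly infinite entropy would wreck the naive block-approximation bound — this is the reason for replacing the block restrictions of the second marginal by uniform blocks of density $\ell^{-n}$.
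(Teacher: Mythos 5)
Your proof follows the same skeleton the paper sketches (coercivity in $M(\rho)$ via \thref{thm:EntropyMomentBound}, then compactness/l.s.c.\ for existence and liminf, then a block-approximation recovery sequence), but it fills in a genuine subtlety that the paper's sketch glosses over. The paper says to "apply the block approximation to each $\gamma_i$ separately," yet the block approximation and its entropy bound (\thref{thm:OTConvergenceBlockEntropyBound}) require the second marginal to be absolutely continuous with finite entropy. In the barycenter setting the common second marginal $\rho$ is free: any $\rho\in\prob(\R^n)$ is admissible via $\gamma_i=\mu_i\otimes\rho$, so $\rho$ may have infinite entropy or even be singular, and then the unmodified block approximation would give $H_2(\gamma_{i,\ell})=+\infty$. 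Your variant — replacing the normalized restriction of $\rho$ on the $y$-side by the uniform density $\ell^{-n}\mathbf{1}_{\Qkl}$ — fixes this cleanly: the marginals are preserved (with a \emph{common} piecewise-constant second marginal $\rho_\ell$, independent of $i$), the Wasserstein bound and transport-cost convergence carry over verbatim from \thref{thm:OTpartitionConvergence,thm:BlockTransportCostConvergence}, and the entropy bound drops $H_1(\rho)$ entirely, leaving only $H_1(\mu_i)$, the discrete entropy of $\mu_i$, and $-n\log\ell$. The rest of the argument (coercivity, tightness of $\{\rho^m\}$, narrow compactness, and the liminf step mirroring \thref{thm:OTGammaLiminf}) matches the paper's approach. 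In short: your proof is correct and in the same spirit as the paper's, but it is more careful and makes the recovery construction genuinely work for arbitrary admissible $(\gamma_i)_i$, which the sketch as written does not.
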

\begin{proof}
  The proof is very similar to the arguments given above and we only sketch the main steps. First note that we can limit our analysis to candidates in $\Pi((\mu_i)_i)$ with a common second marginal. Further, from the estimate \eqref{eq:EntropyMomentBound} we deduce that \correc{
\begin{align}
  \mc{G}_k((\gamma_i)_i)&=\sum_{i=1}^N\left(\int_{\R^n\times \R^n} \abs{x_i-y_i}^2\,d\gamma_i +\veps_k  H_2(\gamma_i)\right) \\
                        &\geq \sum_{i=1}^N\left(\int_{\R^n\times \R^n} \left(\frac{1}{2}\abs{y_i}^2-\abs{x_i}^2\right)\,d\gamma_i-\veps_k C\left(M_{\R^n\times \R^n}(\gamma_i)+1 \right)^\alpha\right)\\
                        &\geq\sum_{i=1}^N\left( \frac{1}{2}M_{\R^n}(\rho)-M_{\R^n}(\mu_i)-\veps_k C\left(M_{\R^n}(\rho)+M_{\R^n}(\mu_i)+1 \right)^\alpha\right),
\end{align}
where each term of the sum is coercive as a function of $M_{\R^n}(\rho)$. Hence it suffices to consider feasible sequences where the common second marginal $\rho$ has uniformly bounded second moment $M(\rho)$.
}

	The functionals $\mc{G}_k$ and $\mc{G}$ can then essentially be written as sums of the functionals $\mc{F}_k$ and $\mc{F}$. Existence of minimizers and the lim-inf condition then follow from standard narrow compactness and l.s.c.\ arguments as above. Likewise, a recovery sequence for the lim-sup condition can be constructed by applying the `block approximation' to each $\gamma_i$ separately.
\end{proof}


\section{Entropy Regularized Wasserstein Gradient Flows}
\label{sec:JKO}

\subsection{Set-up}
\label{sec:JKO-Setup}

In this section we investigate a time discrete gradient descent scheme on $\prob(\R^n)$, where we replace the standard optimal transport distance \eqref{eq:W2} by its entropy regularized variant \eqref{eq:W2Entropy}. This is no longer a distance, but we show that in a suitable joint limit of vanishing entropy regularization and time-step size it converges to the same PDE as the unregularized scheme. It is well-known since the seminal work of Jordan, Kinderlehrer and Otto \cite{JKO1998} that under appropriate assumptions on the energy $F$, the JKO implicit Euler scheme:
\begin{equation}\label{jkoschemeusual}
\rho^{k+1} \in \argmin_{\rho} \{\frac{1}{2\tau} W^2(\rho, \rho^k)+F(\rho)\}
\end{equation}
selects as the time step $\tau\to 0$ a solution of the evolution PDE:
\begin{equation}\label{pdejko}
\partial_t \rho=\ddiv(\rho \nabla F'(\rho)).
\end{equation}
We refer to the textbook \cite{AmbrosioGradientFlows2005} for a detailed presentation of the theory of gradient flows in the Wasserstein space and general convergence results. From a numerical point of view, the Wasserstein term in \eqref{jkoschemeusual} is delicate to handle and the regularized JKO scheme consists in replacing $W^2$ by $W^2_\veps$ in \eqref{jkoschemeusual}. Doing so, we have an Euler scheme which involves two small paramaters $\veps$ and $\tau$ and the question we wish to address is how to relate these parameters in such a way that the scheme still converges to a solution of the evolution equation \eqref{pdejko}.

The functional which we have to consider at each step of the regularized JKO scheme therefore is given by, for $\mu, \nu \in \probac(\R^n)$:
\begin{align}
	J_{\veps,\tau}(\mu,\nu) & \eqdef \frac{1}{2\,\tau} W_\veps^2(\mu,\nu) + F(\nu)
	\label{eq:IntroJKOEntropy}
\end{align}
where $\veps>0$ is the entropy regularization parameter and $\tau>0$ is the time-step size. $F : \probac(\R^n) \rightarrow \RExt$ is called \emph{free energy} and describes external potentials and local self-interactions. We now specify the class of free energies that we consider in this article, to keep the exposition simple, we did not try to give the most general assumptions.

\begin{assumption}[Free Energy]
\label{asp:FreeEnergy}
Assume that $F : \probac(\R^n) \rightarrow \RExt$ has the following form:
\begin{equation}
	F(\nu) \eqdef V(\nu) + U(\nu)
\end{equation}
\begin{align}
\tn{with} \quad 
	V(\nu) & \eqdef \int v(x)\,\nu(x)\,dx\,, & U(\nu) & \eqdef \int u(\nu(x))\,dx\,.
\end{align}
\correc{The functionals $U$ and $V$ are respectively the internal and potential energies.} We call $v$ the \emph{potential \correc{density}} and assume that it is \emph{non-negative} ($v : \R^n \rightarrow \R_+$) and \emph{Lipschitz} (hence Lebesgue a.e.~differentiable);  $u : [0,\infty) \rightarrow \R$ represents the \emph{internal energy \correc{density}}. We assume it to be \emph{convex}, \emph{twice differentiable} ($u \in C^2((0,\infty),\R)$), $u(0)=0$ and \emph{super-linear}:
\begin{align}
	\lim_{s \rightarrow \infty} u(s)/s = \infty.
	\label{eq:IntroUSuperLinear}
\end{align}
Moreover, we assume that there exists a finite, positive $C$ and some $\alpha$ with $\frac{n}{n+2} < \alpha < 1$ such that
\begin{align}\label{eq:IntroULBound}
	u(s) \geq -C\,s^\alpha\,.
\end{align}
We introduce the \emph{pressure} $p$ associated to $u$:
\begin{equation*}	
	p : [0,\infty) \rightarrow \R_+, \qquad p(s) \eqdef s \cdot u'(s)-u(s).
\end{equation*}
Note that this is a non-negative and non-decreasing function. We further assume that for some $C>0$ and $m\ge 1$:
\begin{equation}
	\label{growth1}
	p(s) \le C s^m, \qquad p'(s) \ge \frac{s^{m-1}}{C}
\end{equation}
and 
\begin{equation}
	\label{growth2}
	U(\mu)+C \cdot M(\mu)\ge \frac{1}{C} \int_{\R^n} \mu(x)^m\,dx, \qquad \forall \mu\in \probac(\R^n)\,,
\end{equation}
which implies that bounds on the internal energy and on second moments give a $L^m$ bound as well.
\end{assumption}

\begin{remark}
	\label{rem:FreeEnergyUImplications}
	The above assumptions on $u$ also imply that it is \emph{bounded from below}, it is \emph{continuous in $0$}, $\lim_{s \rightarrow 0^ +} s \cdot u'(s) = 0$ and \eqref{eq:IntroULBound} implies that there exists a (different) constant $C>0$ such that (cf.~\thref{thm:EntropyMomentBound})
	\begin{align}
		\label{eq:IntroFLBound}
		U(\mu) \geq -C\,(1 + M(\mu))^\alpha\,.
	\end{align}
\end{remark}

The PDE \eqref{pdejko} can be written as the nonlinear diffusion equation which is naturally formulated in terms of the pressure:
\begin{equation}\label{porouslike}
	\partial_t \rho=\Delta p(\rho)+\ddiv(\rho \nabla v)\,
\end{equation}
that we supplement with an initial condition $\rho\vert_{t=0}=\rho_0$ with $\rho_0\in \prob(\R^n)$ such that $F(\rho_0)<+\infty$ (so that $\rho_0\in L^m(\R^n)$). Note that the assumptions above for $u$ ($U$ respectively) are satisfied for the entropy and convex power functions which correspond repectively to the heat equation and the porous medium equation. The next paragraphs are devoted to the convergence proof of the following regularized JKO scheme.

Let $\veps,\tau > 0$ be real positive parameters and $N>0$ some positive integer.
For a given initial density $\iteratePar{\rho}{0}=\rho_0 \in \probac(\R^n)$ with $F(\iteratePar{\rho}{0}) < \infty$ we now introduce the sequence $\{\iteratePar{\rho}{k}\}_{k=0}^{N-1}$ in $\probac(\R^n)$ constructed recursively from $\iteratePar{\rho}{0}$ by:
\begin{equation}\label{eq-jko-step-regul}
	\iteratePar{\rho}{k+1} \in \argmin_{\rho \in \probac(\R^n)} J_{\veps,\tau}(\iteratePar{\rho}{k},\rho)
	\qquad \text{for} \qquad k=0,\ldots,N-2.
\end{equation}
We denote by $\iteratePar{\gamma}{k+1}$ the optimal coupling in $W_\veps^2(\iteratePar{\rho}{k},\iteratePar{\rho}{k+1})$.

For a given sequence we introduce the piecewise constant interpolation
\begin{equation}
	\interpEpsTauN : [0,N \cdot \tau ) \, \rightarrow \, \probac(\R^n), \qquad
	\interpEpsTauN(t) = \iteratePar{\rho}{k} \quad \tn{when} \quad t \in [k \cdot \tau, (k+1) \cdot \tau )\,.
\end{equation}

Our main result, stated in Theorem \ref{thm:JKOConvergence}, is the convergence of this scheme to a solution of the PDE \eqref{porouslike} provided $\veps \vert \log(\veps) \vert=O(\tau^2)$. We do not know if the condition $\veps \vert \log(\veps) \vert=O(\tau^2)$ is optimal but is not difficult to see that it is necessary that $\veps=o(\tau)$ (see in particular Proposition  \ref{thm:Eulerreg}).

%


\subsection{Preliminary Results}
We now establish some preliminary results that are fundamental for the further analysis of the gradient flow.
\begin{lemma}
	\thlabel{thm:Flsc}
	$F$ is l.s.c.~in the narrow topology under bounded second order moments.
\end{lemma}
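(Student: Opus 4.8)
The plan is to treat the two contributions $V$ and $U$ to the free energy $F = V + U$ separately, since lower semicontinuity is stable under sums. Throughout, fix a sequence $(\nu_j)_{j \in \N}$ in $\probac(\R^n)$ converging narrowly to $\nu \in \probac(\R^n)$ with $\sup_j M(\nu_j) \leq C_0 < \infty$; we must show $F(\nu) \leq \liminf_j F(\nu_j)$. After passing to a subsequence we may assume the $\liminf$ is a genuine limit.

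For the potential term $V(\nu) = \int v\,\nu$, the integrand $v$ is non-negative, Lipschitz and hence continuous, but not bounded; so narrow convergence does not immediately give convergence of $\int v\,d\nu_j$. The fix is a standard truncation argument using the uniform second moment bound: write $v = v \wedge R + (v - R)_+$ and note that since $v$ is Lipschitz, $v(x) \leq v(0) + L|x|$, so on $\{|x| > \rho\}$ one has $v(x) \leq (v(0)+L)|x|^2/\rho$ for $\rho$ large, whence $\int_{\{|x|>\rho\}} v\,d\nu_j \leq (v(0)+L)\,C_0/\rho$ uniformly in $j$. Thus the tails are uniformly small, $v \wedge R$ is bounded continuous so $\int (v\wedge R)\,d\nu_j \to \int (v \wedge R)\,d\nu$, and a diagonal/limiting argument gives $\liminf_j V(\nu_j) \geq V(\nu)$ (in fact equality, but lower semicontinuity is all we need). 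Since $v \geq 0$, monotone convergence handles the passage $R \to \infty$ cleanly.

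For the internal energy $U(\nu) = \int u(\nu(x))\,dx$, the key point is that $u$ is convex, so $U$ is a convex integral functional of the density; convex functionals that are narrowly (hence $L^1$-weakly, on our subsequence, after noting the densities are tight by the moment bound) lower semicontinuous are exactly what the classical theory of integral functionals provides. Concretely, I would invoke lower semicontinuity of $\nu \mapsto \int u(\nu)$ with respect to weak $L^1$ (or narrow) convergence: $u$ is convex, and by \eqref{eq:IntroULBound} it is bounded below by $-C s^\alpha$ with $\alpha < 1$, so the negative part $\int \max\{-u(\nu_j),0\}\,dx$ is controlled — using $s^\alpha \leq$ (a constant times) $(1+|x|^2)s + $ (an integrable bump), or more directly invoking the moment bound exactly as in \thref{thm:EntropyMomentBound} / \eqref{eq:IntroFLBound} — which rules out loss of mass to $-\infty$ and lets Fatou-type arguments for convex integrands apply. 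The cleanest route is: superlinearity of $u$ gives, via de la Vallée–Poussin, that along our bounded-moment subsequence the family $(\nu_j)$ is uniformly integrable in the relevant sense, and then the standard result on weak-$L^1$ lower semicontinuity of convex integral functionals (e.g.\ Ioffe's theorem, or the argument via Mazur's lemma applied to convex combinations) yields $\int u(\nu) \leq \liminf_j \int u(\nu_j)$.

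The main obstacle is the internal energy term, specifically handling the fact that narrow convergence of measures is weaker than weak-$L^1$ convergence of densities — one must first upgrade to a setting where the convexity-based lower semicontinuity applies. This is exactly where the uniform second-moment hypothesis and the lower bound \eqref{eq:IntroULBound} (with $\alpha > n/(n+2)$, which controls the negative part of $u(\nu)$ against the moments) do the work: they prevent both concentration (escape to Dirac masses) and escape of mass to infinity, pinning the problem down to a bona fide convex-integral-functional lower semicontinuity statement. Once that is set up, the potential term is routine by truncation as above, and adding the two gives the claim. \qedhere
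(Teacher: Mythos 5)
Your proof takes a genuinely different route from the paper's, which disposes of the lemma in two lines of citations. For the potential term $V$, the paper invokes \cite[Thm.~2.38]{AFP00} (a Reshetnyak-type lower semicontinuity result for 1-homogeneous convex l.s.c.\ integrands), which gives narrow l.s.c.\ of $V$ without any moment hypothesis; your truncation argument instead exploits the uniform second moments to show outright narrow continuity of $V$, which is valid and more self-contained. For the internal energy $U$, the paper cites \thref{cor:fctLSC}, whose proof splits the integral over $B_R$ and $\R^n\setminus B_R$, applies \thref{thm:ambroLSC} to the non-negative part, and controls the negative tail outside $B_R$ via the $\alpha$-moment estimate of \thref{thm:muAlphaBound}. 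Your plan --- pass to a subsequence where $U(\nu_j)$ is bounded, upgrade narrow to weak-$L^1$ convergence via Dunford--Pettis, then apply Mazur/Ioffe for weak-$L^1$ l.s.c.\ of the convex integrand --- is a legitimate alternative that in the end uses the same structural ingredients (superlinearity, the lower bound $u(s)\geq -Cs^\alpha$, and the moment control).

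One step in your sketch is stated in the wrong logical order and, read literally, is not correct: ``superlinearity of $u$ gives, via de la Vallée--Poussin, uniform integrability.'' De la Vallée Poussin needs a \emph{non-negative}, convex, superlinear $G$ with $\sup_j\int G(\nu_j)\,dx<\infty$; the natural choice is $G=u^+=\max\{u,0\}$. Passing to a subsequence makes $\int u(\nu_j)=\int u^+(\nu_j)-\int u^-(\nu_j)$ bounded, but to conclude $\sup_j\int u^+(\nu_j)<\infty$ you must \emph{first} bound the negative part uniformly, and that is exactly what $u(s)\geq -Cs^\alpha$ with $\alpha\in(\tfrac{n}{n+2},1)$ together with \thref{thm:muAlphaBound} and the moment hypothesis supplies, namely $\int u^-(\nu_j)\,dx\leq C\int\nu_j^\alpha\,dx\leq C'(1+M(\nu_j))^\alpha\leq C''$. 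You do gesture at this (``or more directly invoking the moment bound''), so the idea is present, but the control of the negative part must precede the de la Vallée Poussin step, not follow it. With that reordering made explicit, your argument is sound and essentially re-proves \thref{cor:fctLSC} by a Dunford--Pettis/Mazur route rather than the paper's ball-truncation route.
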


\begin{proof}
	Narrow lower semi-continuity of $V$ follows from \cite[Thm.~2.38]{AFP00} ($(x,y) \mapsto v(x)\cdot y$ is l.s.c.\ in $(x,y)$ and 1-homogeneous and convex in $y$).
	For $U$ we apply \thref{cor:fctLSC} (subject to Assumption \ref{asp:FreeEnergy} and bounded moments).
\end{proof}


\begin{proposition}
	For $\mu \in \probac(\R^n)$, $H_1(\mu)$, $F(\mu) < \infty$, the optimization problem
	\begin{align}\label{eq-min-jko}
		\inf_{\nu \in \probac(\R^n)} J_{\veps,\tau}(\mu,\nu)
	\end{align}
	has a unique minimizer.
\end{proposition}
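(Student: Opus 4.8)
The strategy is the direct method of the calculus of variations for existence, followed by strict convexity for uniqueness. First I would establish that the infimum in \eqref{eq-min-jko} is finite: the product-type competitor is not immediately available here because $\nu$ is also an optimization variable, but we may simply test with $\nu = \mu$ (or any fixed $\nu_0 \in \probac(\R^n)$ with $H_1(\nu_0) < \infty$ and $F(\nu_0) < \infty$, e.g.\ a smooth compactly supported bump), for which $W^2_\veps(\mu,\nu_0) < \infty$ by \thref{thm:W2EpsExistence} and $F(\nu_0) < \infty$, so $J_{\veps,\tau}(\mu,\nu_0) < \infty$. Next, taking a minimizing sequence $(\nu_j)_j$, I would extract compactness: by \eqref{eq:W2MomentBound} a bound on $W^2_\veps(\mu,\nu_j)$ (which follows from the uniform bound on $J_{\veps,\tau}(\mu,\nu_j)$ together with the lower bound \eqref{eq:IntroFLBound} on $F$ via $V \geq 0$ and \eqref{eq:IntroFLBound} for $U$, where the moment term is itself controlled through $W^2_\veps$ — one has to unwind this circular-looking estimate, which is where I expect the bookkeeping to be slightly delicate) gives a uniform bound on $M(\nu_j)$; tightness of $(\nu_j)_j$ then follows from Prokhorov, so up to a subsequence $\nu_j \to \nu$ narrowly with $\nu$ having bounded second moment.

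It then remains to check that $\nu \in \probac(\R^n)$ and that $J_{\veps,\tau}(\mu,\cdot)$ is narrowly lower semicontinuous along this sequence under the uniform moment bound. Lower semicontinuity of $\nu \mapsto W^2_\veps(\mu,\nu)$ is exactly \thref{thm:W2Epslsc}, provided the $H_1(\nu_j)$ are also uniformly bounded; this uniform entropy bound must itself be extracted from the bound on $J_{\veps,\tau}$ — indeed $\veps H_2(\gamma^{(j)}) \leq 2\tau J_{\veps,\tau}(\mu,\nu_j) + \text{(lower-order terms)}$, and combining this with the marginal entropy bound is a standard but not entirely trivial point; alternatively one can invoke \thref{thm:Flsc} for $F$ and handle $W^2_\veps$ separately. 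Lower semicontinuity of $F$ under bounded moments is \thref{thm:Flsc}. The super-linearity \eqref{eq:IntroUSuperLinear} of $u$ guarantees (via a de la Vallée–Poussin / Dunford–Pettis argument, or directly from the $L^m$ bound \eqref{growth2}) that $\nu$ is absolutely continuous and in fact in $L^m(\R^n)$, so the limit is admissible and $F(\nu) < \infty$. Putting these pieces together yields $J_{\veps,\tau}(\mu,\nu) \leq \liminf_j J_{\veps,\tau}(\mu,\nu_j) = \inf$, so $\nu$ is a minimizer.

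For uniqueness: $\nu \mapsto W^2_\veps(\mu,\nu)$ is strictly convex by \thref{thm:W2EpsStrictConvexity}, and $\nu \mapsto F(\nu) = V(\nu) + U(\nu)$ is convex ($V$ is linear in $\nu$, and $U$ is convex since $u$ is convex). Hence $J_{\veps,\tau}(\mu,\cdot)$ is the sum of a strictly convex and a convex functional on the convex set $\probac(\R^n)$, thus strictly convex, and admits at most one minimizer. The main obstacle is really the coercivity/compactness step, where the lower bound on $F$ involves $M(\nu)$ to the power $\alpha < 1$, which must be absorbed into the $W^2_\veps$ term (itself of order $M(\nu)$ up to constants) — this needs the Young-type inequality $C M^\alpha \leq \tfrac{1}{4\tau} M + C'(\tau)$ to close; once that is arranged, everything else is a routine application of the direct method together with the already-established semicontinuity lemmas.
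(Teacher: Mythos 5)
Your plan is correct and follows essentially the same route as the paper: direct method for existence (admissible competitor to show the infimum is finite, coercivity and entropy bounds, tightness/Prokhorov, then \thref{thm:W2Epslsc} and \thref{thm:Flsc} for lower semicontinuity) and strict convexity via \thref{thm:W2EpsStrictConvexity} plus convexity of $F$ for uniqueness. The bookkeeping you flag as ``slightly delicate'' is resolved cleanly in the paper by the one-sided estimate $W^2_\veps(\mu,\nu) \geq W^2(\mu,\nu) + \veps\,H_1(\mu) + \veps\,H_1(\nu)$ (obtained by minimizing the transport and entropy terms in \eqref{eq:W2Entropy} separately), which combined with \eqref{eq:W2MomentBound} and \eqref{eq:IntroFLBound} gives $J_{\veps,\tau}(\mu,\nu) \geq C_1 M(\nu) - C_2(1+M(\nu))^\alpha$ with $\alpha < 1$ directly, yielding both coercivity in $M(\nu)$ and the uniform bound on $H_1(\nu_j)$ that \thref{thm:W2Epslsc} requires, without any circularity.
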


\begin{proof}
	\correc{Using the fact that the infimum of a sum is larger than the sum of infima and that, among transport plans, $H_2$ is bounded from below by the sum of $H_1$ entropies of the marginals, we have,}  for any $\nu \in \probac(\R^n)$
	\begin{equation}
		W^2_\veps(\mu,\nu) \geq W^2(\mu,\nu) + \veps\,H_1(\mu) + \veps\,H_1(\nu)\,.
	\end{equation}
	This is obtained by optimizing separately over the transport and the entropy term in \eqref{eq:W2Entropy}. According to \eqref{eq:W2MomentBound} have $M(\nu) \leq 2 M(\mu) + 2\,W^2(\mu,\nu)$. With \thref{thm:EntropyMomentBound} and Assumption \ref{asp:FreeEnergy} we then have (for fixed $\mu$)
	\begin{equation}
		J_{\veps,\tau}(\mu,\nu) \geq C_1\,M(\nu) - C_2\,(1+M(\nu))^\alpha
	\end{equation}
	for two finite $C_1,C_2>0$ and some $\alpha \in (0,1)$. This implies that $\nu \mapsto J_{\veps,\tau}(\mu,\nu)$ is bounded from below.
	Existence of some $\nu \in \probac(\R^n)$ with $J_{\veps,\tau}(\mu,\nu)<\infty$ can be shown via explicit construction, as for example done in the proof of \thref{thm:OneStepBound}.
	Let $(\nu_h)_{h\in\N}$ be a minimizing sequence. Clearly $M(\nu_h)$ and $H_1(\nu_h)$ are \correc{uniformly} bounded. Let then $C<\infty$ be a uniform upper bound on $M(\nu_h)$ and $H_1(\nu_h)$. As $M(\nu_h)$ is bounded, the set $\{\nu_h\}_{h \in \N}$ is tight and hence there is a subsequence narrowly converging to some $\nu \in \probAll(\R^n)$. It follows that $M(\nu)$, $H_1(\nu) \le C$ as well ($H_1$ is l.s.c.~under bounded moments, \correc{see Definition \ref{def:Entropy} and~\thref{cor:fctLSC}}) and thus $\nu \in \probac(\R^n)$. It follows from \thref{thm:W2Epslsc} and \thref{thm:Flsc} that $\nu$ is a minimizer. Uniqueness follows from the strict convexity of $W^2_\veps(\mu,\cdot)$ (\thref{thm:W2EpsStrictConvexity}), and the convexity of $F$.
\end{proof}

\subsection{Euler-Lagrange Equations}

\begin{proposition}
	\thlabel{thm:IntroFVariation}
	Let $\nu\in  \probac(\R^n) \cap L^m(\R^n)$,  $w \in C^\infty_c(\R^n,\R^n)$ and let $(\eta, x)\in [0, \infty)\times \R^n \mapsto \phi_{\eta}(x)$ be the flow of $w$ (i.e. $\phi_0=\id$ and $\partial_\eta \phi_\eta=w(\phi_\eta)$), then the first variation of $F$ along the flow of $w$ is given by
	\begin{align}
		\delta F(\nu,w) & \eqdef \left. \frac{d}{d\eta} F\left({\phi_\eta}_\sharp\,\nu \right) \right|_{\eta=0} \nonumber 	\\
		& = -\int_{\R^n} \underbrace{\left(u'(\nu(x))\,\nu(x) - u(\nu(x))\right)}_{= p(\nu(x))}\,\ddiv(w)(x)\,dx + \int \la \nabla v(x), w(x) \ra\,\nu(x)\,dx\,.
		\label{eq:IntroFVariation}
	\end{align}
\end{proposition}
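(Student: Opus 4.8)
The plan is to compute the derivative $\frac{d}{d\eta}F(\phi_\eta{}_\sharp\nu)\big|_{\eta=0}$ by splitting $F = V + U$ and handling the two pieces separately. Write $\nu_\eta \eqdef \phi_\eta{}_\sharp\nu$. For the potential term $V$, by the change of variables formula $V(\nu_\eta) = \int v\,d\nu_\eta = \int v(\phi_\eta(x))\,\nu(x)\,dx$, so differentiating under the integral sign (justified because $w$ is compactly supported and smooth, $v$ is Lipschitz hence differentiable a.e., and $\nu\in L^1$) gives $\frac{d}{d\eta}\big|_{\eta=0}V(\nu_\eta) = \int \langle \nabla v(x), w(x)\rangle\,\nu(x)\,dx$, using $\partial_\eta\phi_\eta\big|_{\eta=0} = w(\phi_0) = w$. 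One small subtlety here is that $v$ is only Lipschitz, so $v\circ\phi_\eta$ is differentiable in $\eta$ only for a.e.\ $x$; this is handled by a dominated-convergence argument on the difference quotients, using that $\nabla v \in L^\infty$ and that $\phi_\eta(x)$ stays in a fixed compact set for $\eta$ bounded and $x$ in the (compact) support of $w$ together with a bound on the trajectories.

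For the internal energy term, the key is the standard formula for the push-forward density: since $\phi_\eta$ is a smooth diffeomorphism for small $\eta$ (being the flow of a smooth compactly supported field), the density of $\nu_\eta$ is $\nu_\eta(y) = \nu(\phi_\eta^{-1}(y))/\det D\phi_\eta(\phi_\eta^{-1}(y))$. Changing variables back, $U(\nu_\eta) = \int u(\nu_\eta(y))\,dy = \int u\!\left(\frac{\nu(x)}{\det D\phi_\eta(x)}\right)\det D\phi_\eta(x)\,dx$. Now differentiate in $\eta$ at $\eta = 0$, using $\det D\phi_0 = 1$, $\frac{d}{d\eta}\big|_{\eta=0}\det D\phi_\eta(x) = \ddiv(w)(x)$ (from $D\phi_\eta = I + \eta\,Dw + o(\eta)$ and Jacobi's formula), and $u \in C^2((0,\infty))$. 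Writing $g(\eta,x) \eqdef u\!\left(\nu(x)/\det D\phi_\eta(x)\right)\det D\phi_\eta(x)$ and applying the chain rule, the terms combine to $\partial_\eta g(0,x) = \big(u(\nu(x)) - u'(\nu(x))\,\nu(x)\big)\ddiv(w)(x) = -p(\nu(x))\,\ddiv(w)(x)$. Integrating gives the first term of \eqref{eq:IntroFVariation}.

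The main obstacle is justifying the differentiation under the integral sign for $U$, i.e.\ producing an $\eta$-uniform integrable dominating function for $\partial_\eta g(\eta,x)$ near $\eta = 0$. On the support of $w$ this is no issue since $D\phi_\eta$ and its inverse are uniformly bounded there and $u$ is $C^2$ away from $0$ — but one must be careful near the set where $\nu(x)$ is small or zero, using $\lim_{s\to 0^+} s\,u'(s) = 0$ and continuity of $u$ at $0$ (Remark~\ref{rem:FreeEnergyUImplications}), so that the integrand and its $\eta$-derivative are controlled; the hypothesis $\nu \in L^m(\R^n)$ together with the growth bounds \eqref{growth1} on $p$ provides the needed integrable majorant for $p(\nu)$, since $|\partial_\eta g(\eta,x)| \le C\,p\big(\nu(x)/\det D\phi_\eta(x)\big)|\ddiv w(x)| + (\text{lower order})$ and $p$ has at most $m$-th power growth. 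Outside $\spt w$ we have $\phi_\eta = \id$, so $g(\eta,x) = u(\nu(x))$ is independent of $\eta$ and contributes nothing. Assembling the two pieces yields \eqref{eq:IntroFVariation}.
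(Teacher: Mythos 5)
Your proof is correct and follows essentially the same route as the paper: change of variables to rewrite $U(\phi_\eta{}_\sharp\nu)$ as an integral over the reference coordinates with the Jacobian $J_\eta=\det D\phi_\eta$, the identity $\partial_\eta\bigl(u(\nu/J_\eta)J_\eta\bigr)=-p(\nu/J_\eta)J_\eta\,\ddiv(w\circ\phi_\eta)$, and dominated convergence justified by $\nu\in L^m$ together with the growth bound \eqref{growth1} on $p$. The paper treats $V$ as analogous and omits it; your argument for that term is the intended one, and your extra care about the pointwise a.e.\ differentiability of the Lipschitz $v$ and about small values of $\nu$ is a more detailed spelling-out of the same estimates rather than a different method.
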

\begin{proof}
The proof being standard, we just give the main arguments for the sake of completeness. Denoting by $J_\eta(x):=\det(D_x \phi_\eta(x))$ the Jacobian of the diffeomorphism $\phi_\eta$, we have  
\[U({\phi_\eta}_\sharp\,\nu)=\int_{\R^n} u\Big(\frac{\nu(x)}{J_\eta(x)} \Big) J_\eta(x) \mbox{d} x, \; \partial_\eta J_\eta(x)=\ddiv(w(\phi_\eta(x))) J_\eta(x),\]
the integrand above therefore is a smooth function of $\eta$: 
\[\partial_\eta \Big( u\Big(\frac{\nu(x)}{J_\eta(x)} \Big) J_\eta(x)  \Big)=-p\Big(\frac{\nu(x)}{J_\eta(x)} \Big) J_\eta(x) \ddiv(w(\phi_\eta(x))),\]
thus, using Assumption \eqref{growth1}, the fact that $\nu\in L^m$ and Lebesgue's dominated convergence Theorem, we get
\[\frac{d}{d\eta} U\left({\phi_\eta}_\sharp\,\nu \right) |_{\eta=0}= -\int_{\R^n} p(\nu(x))\ddiv(w)(x)\,dx.\]
The term involving the potential energy $V$ is similar and therefore omitted. 

\end{proof}

\label{sec:EulerLagrange}
\begin{proposition}[Euler-Lagrange Equation]
       \thlabel{thm:Eulerreg}
	Let $\nu$ be the optimizer of $J_{\veps,\tau}(\mu,\cdot)$, let $\gamma$ be the optimal transport plan in $W_\veps^2(\mu,\nu)$. Then for any $w \in C^\infty_c(\R^n,\R^n)$ one has
	\begin{equation}
		0 = - \frac{1}{\tau} \int \la w(y),x-y \ra\,d\gamma(x,y) - \frac{\veps}{2\,\tau} \int \nu(y)\,(\ddiv w)(y)\,dy + \delta F(\nu,w)
		\label{eq:EulerLagrangeEulerLagrange}
	\end{equation}
	where $\delta F(\nu,w)$ is given by \eqref{eq:IntroFVariation}.
\end{proposition}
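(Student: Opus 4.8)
The plan is to derive the Euler-Lagrange equation by perturbing the optimizer $\nu$ of $J_{\veps,\tau}(\mu,\cdot)$ along the flow of a test vector field $w$, and writing that the derivative of the objective at the optimum vanishes. The subtle point is that we cannot perturb $\nu$ directly in a free way because $\nu$ is constrained to be a probability density; instead we use the push-forward ${\phi_\eta}_\sharp \nu$ along the flow $\phi_\eta$ of $w$, which automatically stays in $\probac(\R^n)$ and has total mass one. So I set $\nu_\eta \eqdef {\phi_\eta}_\sharp \nu$ and consider $g(\eta) \eqdef J_{\veps,\tau}(\mu,\nu_\eta) = \tfrac{1}{2\tau} W^2_\veps(\mu,\nu_\eta) + F(\nu_\eta)$. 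Since $\nu_0 = \nu$ is the minimizer and $\nu_\eta \in \probac(\R^n)$ for all $\eta$ (with finite entropy and second moment for $\eta$ small, as $w$ is compactly supported and smooth, so $\phi_\eta$ is a diffeomorphism close to the identity), $\eta = 0$ is an interior minimum of $g$ over a neighborhood, hence $g'(0) = 0$ provided $g$ is differentiable there.

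\textbf{Differentiating the free energy term.} The term $\tfrac{d}{d\eta} F(\nu_\eta)\big|_{\eta=0} = \delta F(\nu,w)$ is exactly \thref{thm:IntroFVariation}, so this contributes the last term of \eqref{eq:EulerLagrangeEulerLagrange}. Here I should note that $\nu \in L^m(\R^n)$ (which is needed to apply \thref{thm:IntroFVariation}) follows from $F(\nu) < \infty$ and bounded second moment via \eqref{growth2} in Assumption \ref{asp:FreeEnergy}.

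\textbf{Differentiating the regularized Wasserstein term.} This is the main obstacle. I need to compute $\tfrac{d}{d\eta} W^2_\veps(\mu,\nu_\eta)\big|_{\eta=0}$. The strategy: let $\gamma \in \PiAc(\mu,\nu)$ be the optimal plan for $W^2_\veps(\mu,\nu)$. Then $\gamma_\eta \eqdef (\id \times \phi_\eta)_\sharp \gamma$ is a (not necessarily optimal) element of $\PiAc(\mu,\nu_\eta)$, giving the upper bound $W^2_\veps(\mu,\nu_\eta) \leq (c,\gamma_\eta) + \veps H_2(\gamma_\eta)$; at $\eta = 0$ this is an equality. By a standard envelope/one-sided-derivative argument (the upper bound is differentiable in $\eta$ and touches $W^2_\veps(\mu,\nu_\eta)$ from above at $\eta=0$, while $\eta \mapsto W^2_\veps(\mu,\nu_\eta)$ is, composed with the free energy, minimized at $0$), the relevant one-sided derivatives of $W^2_\veps(\mu,\nu_\eta)$ at $0$ are squeezed to equal those of the upper bound. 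Concretely I compute:
\begin{itemize}
\item $\tfrac{d}{d\eta}(c,\gamma_\eta)\big|_{\eta=0} = \tfrac{d}{d\eta}\int |x-\phi_\eta(y)|^2 d\gamma(x,y)\big|_{\eta=0} = -2\int \la w(y), x-y\ra d\gamma(x,y)$, using $\partial_\eta \phi_\eta|_{\eta=0} = w$ and dominated convergence (the integrand's $\eta$-derivative is controlled by $|x-y|\,\|w\|_\infty + \|w\|_\infty^2$, integrable since $\gamma$ has bounded second moments). The factor $-2$ combined with the prefactor $\tfrac{1}{2\tau}$ yields $-\tfrac{1}{\tau}\int \la w(y), x-y\ra d\gamma(x,y)$, the first term of \eqref{eq:EulerLagrangeEulerLagrange}.
\item $\tfrac{d}{d\eta} H_2(\gamma_\eta)\big|_{\eta=0}$: writing the density of $\gamma_\eta$ as $\gamma_\eta(x,y) = \gamma(x,\phi_\eta^{-1}(y))/J_\eta(\phi_\eta^{-1}(y))$ and changing variables exactly as in the proof of \thref{thm:IntroFVariation} with $u(s) = s\log s$ (for which $p(s) = s$), one gets $\tfrac{d}{d\eta} H_2(\gamma_\eta)\big|_{\eta=0} = -\int \ddiv(w)(y)\, d\gamma(x,y) = -\int \nu(y)\,(\ddiv w)(y)\,dy$, since the $y$-marginal of $\gamma$ is $\nu$. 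Multiplied by $\tfrac{\veps}{2\tau}$ this is the second term of \eqref{eq:EulerLagrangeEulerLagrange}. (One must check integrability/applicability of dominated convergence here: this uses that $H_2(\gamma) < \infty$ together with the compact support of $w$; the term $\log\gamma_\eta$ differs from $\log\gamma$ by the bounded quantity $\log J_\eta$, and $\gamma$ restricted to $\R^n \times \spt(\phi_\eta(\cdot))$ has controlled entropy.)
\end{itemize}

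\textbf{Assembling.} Adding the three contributions and setting $g'(0) = 0$ gives \eqref{eq:EulerLagrangeEulerLagrange}. One clean way to avoid fussing over two-sided differentiability of $W^2_\veps(\mu,\nu_\eta)$ itself is: define $h(\eta) \eqdef \tfrac{1}{2\tau}\big[(c,\gamma_\eta) + \veps H_2(\gamma_\eta)\big] + F(\nu_\eta) \geq g(\eta)$ for all $\eta$ with $h(0) = g(0) = \min g$; then $h$ is differentiable at $0$ and also attains its minimum over the perturbation there (since $h \geq g \geq g(0) = h(0)$), so $h'(0) = 0$, and $h'(0)$ is precisely the left-hand side computed above. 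The hardest technical point is justifying the differentiation under the integral sign for the entropy term $H_2(\gamma_\eta)$ near $\eta = 0$ — controlling the $\gamma\log\gamma$ part uniformly in $\eta$ — but since $w$ has compact support, $\phi_\eta$ differs from the identity only on a fixed compact set and the Jacobian $J_\eta$ stays bounded away from $0$ and $\infty$ uniformly for $|\eta|$ small, so this reduces to the dominated convergence argument already used in \thref{thm:IntroFVariation}.
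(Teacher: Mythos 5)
Your proof is correct and follows essentially the same strategy as the paper: you perturb the optimizer along the flow $\phi_\eta$ of $w$, bound $W^2_\veps(\mu,{\phi_\eta}_\sharp\nu)$ from above using the pushed-forward coupling $(\id,\phi_\eta)_\sharp\gamma$, and differentiate the transport cost and the entropy of this coupling, obtaining exactly the two terms $-2\int\langle w(y),x-y\rangle\,d\gamma$ and $-\int\nu\,(\ddiv w)$ that the paper derives in \eqref{eq:EulerLagrangeWEntropyVariation}, while the $F$ term is handled via \thref{thm:IntroFVariation}. The only cosmetic difference is at the very end: the paper computes a one-sided limsup and then uses linearity of the right-hand side in $w$ together with the perturbation $-w$ to upgrade the inequality to an equality, whereas you observe that the majorant $h(\eta)$ satisfies $h(\eta)\geq g(\eta)\geq g(0)=h(0)$ for all small $\eta$ of either sign, is differentiable at $0$, and is minimized there, so $h'(0)=0$ directly; both routes are standard envelope arguments and are equally valid. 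Your remark that $\nu\in L^m$ (needed to invoke \thref{thm:IntroFVariation}) follows from $F(\nu)<\infty$ and bounded second moment via \eqref{growth2} is a correct and useful clarification that the paper leaves implicit.
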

\begin{proof}
The proof closely follows the arguments leading to \cite[Eq.\ (40)]{JKO1998}.
Let $\phi_\eta$ be flow of $w$ at time $\eta$, i.e.~$\phi_\eta$ satisfies the ODE $\partial_\eta \phi_\eta = w \circ \phi_\eta$ with initial condition $\phi_0 = \id$. Since $\nu$ is optimal, one has $J(\mu,{\phi_\eta}_\sharp \nu) \geq J(\mu,\nu)$ for all $\eta$.
Now study
\begin{align}	
	0 & \leq \limsup_{\eta \searrow 0} \frac{1}{\eta} \left(
		J_{\veps,\tau}(\mu,{\phi_\eta}_\sharp \nu) - J_{\veps,\tau}(\mu,\nu) \right) \\
	& \leq \limsup_{\eta \searrow 0} \frac{1}{2\,\tau\,\eta} \left(  W_\veps^2(\mu,{\phi_\eta}_\sharp \nu) - W_\veps^2(\mu,\nu) \right)
		+ \limsup_{\eta \searrow 0} \frac{1}{\eta} \left(  F({\phi_\eta}_\sharp \nu) - F(\nu) \right).
	\label{eq:EulerLagrangeLimSup}
\end{align}
The limit for the $F$ term is given by \eqref{eq:IntroFVariation}. Now let us look at the first limit. Given $\gamma$, we can generate a coupling ${(\id,\phi_\eta)}_\sharp \gamma \in \PiAc(\mu,{\phi_\eta}_\sharp \nu)$ to get an upper bound for $W_\veps^2(\mu,{\phi_\eta}_\sharp \nu)$. We find:
\begin{align}
	& \limsup_{\eta \searrow 0} \frac{1}{\eta} \left(
		W_\veps^2(\mu,{\phi_\eta}_\sharp \nu) - W_\veps^2(\mu,\nu) \right) \\
	\leq {} & \limsup_{\eta \searrow 0} \frac{1}{\eta} \left(
		\big(c,{(\id,\phi_\eta)}_\sharp \gamma\big) + \veps\,H_2\big( {(\id,\phi_\eta)}_\sharp \gamma \big) - (c,\gamma) - \veps\,H_2(\gamma) \right) \\
	= {} & \limsup_{\eta \searrow 0} \frac{1}{\eta} \left(
		\int \left[ |x-\phi_\eta(y)|^2 - |x-y|^2 \right]\,d\gamma(x,y) -
		\veps \int \gamma(x,y) \log \left| \det D\phi_\eta(y) \right| \,dx\,dy
		\right). \\
	\intertext{We now go to the limit:}
	\label{eq:EulerLagrangeWEntropyVariation}
	= {} & - 2 \int \la w(y),x-y \ra\,d\gamma(x,y) - \veps \int \nu(y)\,(\ddiv w)(y)\,dy.
\end{align}
	Plugging this into \eqref{eq:EulerLagrangeLimSup} we find:
	\begin{equation}
		0 \leq - \frac{1}{\tau} \int \la w(y),x-y \ra\,d\gamma(x,y) - \frac{\veps}{2\,\tau} \int \nu(y)\,(\ddiv w)(y)\,dy + \delta F(\nu,w).
	\end{equation}
	By using linearity of the r.h.s.~w.r.t.~$w$ and the fact that this equation holds simultaneously for $w$ and $-w$, we obtain equality.
\end{proof}
	



\subsection{A Priori Estimates}
\subsubsection{One Step}
Throughout this sub-section we will simply write $\iterate{\rho}{k}$ instead of $\iteratePar{\rho}{k}$, since no sequences for different parameters are compared. We denote by $C$ various positive, finite constants (constant w.r.t.~$\veps$, $\tau$, $k$, $\iteratePar{\rho}{k}$ and $\iteratePar{\gamma}{k}$).

\begin{proposition}
\thlabel{thm:OneStepBound}
For
\begin{align}
	0 < \veps \leq \veps |\log \veps| \leq C_1 \cdot \tau^2
	\label{eq:OneStepEpsBound}
\end{align}
with $C_1 \in (0,\infty)$, there is a positive constant $C_2 \in (0,\infty)$ such that
\begin{align}
	(c,\iterate{\gamma}{k+1}) & \leq \tau^2\,C_2 - \veps\,H_1(\iterate{\rho}{k+1}) + 2\,\tau\,F(\iterate{\rho}{k}) - 2\,\tau\,F(\iterate{\rho}{k+1})\,.
	\label{eq:OneStepBound} \\
	\intertext{and}
	(c,\iterate{\gamma}{k+1}) & \leq \tau^2\,C_2 + \tau^2\,C_1\,\Hneg(\iterate{\rho}{k+1}) + 2\,\tau\,F(\iterate{\rho}{k}) - 2\,\tau\,F(\iterate{\rho}{k+1})\,.
	\label{eq:OneStepBoundHneg}
\end{align}
\end{proposition}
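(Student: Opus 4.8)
The plan is to exploit the minimality of $\iterate{\rho}{k+1}$ against a competitor obtained by blurring $\iterate{\rho}{k}$ with a Gaussian kernel of variance $\veps$, the whole difficulty being to arrange a cancellation of the (a priori uncontrolled) entropy of $\iterate{\rho}{k}$. Since $\iterate{\rho}{k+1}$ minimizes $J_{\veps,\tau}(\iterate{\rho}{k},\cdot)$, for every $\nu \in \probac(\R^n)$ one has, after multiplying by $2\tau$,
\begin{align*}
	W^2_\veps(\iterate{\rho}{k},\iterate{\rho}{k+1}) + 2\tau\,F(\iterate{\rho}{k+1}) \le W^2_\veps(\iterate{\rho}{k},\nu) + 2\tau\,F(\nu)\,.
\end{align*}
Writing $W^2_\veps(\iterate{\rho}{k},\iterate{\rho}{k+1}) = (c,\iterate{\gamma}{k+1}) + \veps\,H_2(\iterate{\gamma}{k+1})$ and invoking $H_2(\gamma) \ge H_1(\mu) + H_1(\nu)$ for $\gamma \in \Pi(\mu,\nu)$ (non-negativity of the relative entropy of $\gamma$ with respect to $\mu \otimes \nu$, already used in the proof relative to \eqref{eq-min-jko}), I would rearrange this into
\begin{align*}
	(c,\iterate{\gamma}{k+1}) + \veps\,H_1(\iterate{\rho}{k+1}) + 2\tau\,F(\iterate{\rho}{k+1}) \le W^2_\veps(\iterate{\rho}{k},\nu) + 2\tau\,F(\nu) - \veps\,H_1(\iterate{\rho}{k})\,,
\end{align*}
all quantities being finite since the iterates have finite entropy and free energy. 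Everything then reduces to bounding the right-hand side by a competitor $\nu$ that cancels the term $-\veps\,H_1(\iterate{\rho}{k})$.

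For the competitor I would take $\nu \eqdef \iterate{\rho}{k} * g$, where $g$ is the centered Gaussian density on $\R^n$ of covariance $\veps\,\mathrm{Id}$, together with the coupling $\gamma'$ of density $\gamma'(x,y) = \iterate{\rho}{k}(x)\,g(y-x)$, which belongs to $\PiAc(\iterate{\rho}{k},\nu)$. Elementary Gaussian computations give $(c,\gamma') = n\veps$ and $H_2(\gamma') = H_1(\iterate{\rho}{k}) + H_1(g)$ with $H_1(g) = -\frac{n}{2}\log(2\pi e\,\veps) \le \frac{n}{2}\,|\log\veps|$, hence $W^2_\veps(\iterate{\rho}{k},\nu) \le (c,\gamma') + \veps\,H_2(\gamma') \le n\veps + \veps\,H_1(\iterate{\rho}{k}) + \frac{n}{2}\,\veps\,|\log\veps|$. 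For the free energy, $v$ being Lipschitz with constant $L$ and $g$ symmetric give $V(\nu) - V(\iterate{\rho}{k}) = \int ((v*g)(x)-v(x))\,\iterate{\rho}{k}(x)\,dx \le L\sqrt{n\veps}$, while convexity of $u$ and Jensen's inequality for the probability kernel $g$ give $U(\nu) = \int u((\iterate{\rho}{k}*g)(x))\,dx \le \int u(\iterate{\rho}{k}(x))\,dx = U(\iterate{\rho}{k})$; hence $F(\nu) - F(\iterate{\rho}{k}) \le L\sqrt{n\veps}$. Inserting these bounds into the last display, the two copies of $\veps\,H_1(\iterate{\rho}{k})$ cancel and I obtain
\begin{align*}
	(c,\iterate{\gamma}{k+1}) \le n\veps + \frac{n}{2}\,\veps\,|\log\veps| + 2\tau\,L\sqrt{n\veps} - \veps\,H_1(\iterate{\rho}{k+1}) + 2\tau\,F(\iterate{\rho}{k}) - 2\tau\,F(\iterate{\rho}{k+1})\,.
\end{align*}

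To conclude I would use \eqref{eq:OneStepEpsBound}: Young's inequality gives $2\tau\,L\sqrt{n\veps} \le \tau^2 + L^2 n\veps$, and since $\veps \le \veps|\log\veps| \le C_1\tau^2$ each of $n\veps$, $\frac{n}{2}\veps|\log\veps|$ and $L^2 n\veps$ is at most a fixed multiple of $\tau^2$; collecting constants gives \eqref{eq:OneStepBound}, for instance with $C_2 = (\frac{3n}{2} + L^2 n)\,C_1 + 1$. Then \eqref{eq:OneStepBoundHneg} follows immediately: since $H_1 = \Hpos - \Hneg \ge -\Hneg$, one has $-\veps\,H_1(\iterate{\rho}{k+1}) \le \veps\,\Hneg(\iterate{\rho}{k+1}) \le C_1\tau^2\,\Hneg(\iterate{\rho}{k+1})$, using $\veps \le \veps|\log\veps| \le C_1\tau^2$ once more.

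The one genuinely delicate step is the choice of competitor. The obvious candidate $\nu = \iterate{\rho}{k}$, coupled through the block approximation of the diagonal, leaves in the estimate an uncancelled, data-dependent term of order $\veps\,H_1(\iterate{\rho}{k})$ (the block-approximation coupling of the diagonal has entropy $\approx 2H_1(\iterate{\rho}{k})$, of which the lower bound on $H_2(\iterate{\gamma}{k+1})$ removes only a single copy). Blurring at scale $\sqrt\veps$ is exactly what cures this: it trades that term for the additive, data-independent quantity $\veps\,H_1(g) = O(\veps|\log\veps|)$ while displacing $\iterate{\rho}{k}$ by only $O(\sqrt\veps)$ in the free energy (Lipschitz bound on $v$, convexity of $u$); once this is in place, the rest is bookkeeping governed by the scaling hypothesis \eqref{eq:OneStepEpsBound}.
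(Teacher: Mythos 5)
Your proof is correct and takes essentially the same approach as the paper's: both choose as competitor a mollification of $\iterate{\rho}{k}$ at scale $\sqrt{\veps}$, couple it to $\iterate{\rho}{k}$ by the product-type plan $\gamma'(x,y)=\iterate{\rho}{k}(x)\,G_{\sqrt\veps}(y-x)$, bound the free energy via Jensen's inequality and the Lipschitz bound on $v$, and cancel the uncontrolled $\veps\,H_1(\iterate{\rho}{k})$ against the lower bound $H_2\ge H_1+H_1$. The only cosmetic differences are that the paper uses a generic compactly supported smooth kernel rather than a Gaussian (either works, since only finite first/second moments and finite entropy of the kernel are needed), and that you close the $\tau\sqrt\veps$ term with Young's inequality instead of substituting $\sqrt\veps\le\sqrt{C_1}\,\tau$ directly.
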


\begin{proof}
Let $G \in C_c^\infty(\R^n,\R_+)$ be a compactly supported, smooth probability kernel, such that
\begin{align}
	\int G(x)\,dx & = 1\,, & \int x\,G(x)\,dx & = 0\,, & \int |x|^2\,G(x)\,dx & = 1\,, & H_1(G) & < \infty\,.
\end{align}
For a yet to be determined scale parameter $\sigma > 0$ denote by $G_\sigma$ the scaled kernel $G_\sigma(x) \eqdef \sigma^{-n} \, G(x/\sigma)$.
Let now
\begin{align}
	\hat{\gamma}(x,y) & \eqdef \iterate{\rho}{k}(x)\,G_\sigma(x-y)\,, &
	\hat{\rho}(y) & \eqdef \int \hat{\gamma}(x,y)\,dx\,.
\end{align}
Clearly $\hat{\gamma} \in \PiAc(\iterate{\rho}{k},\hat{\rho})$.
We find:
\begin{align}
	(c,\hat{\gamma}) & = \sigma^2 \\
	H_2(\hat{\gamma}) & = \int \hat{\gamma}(x,y)\,\log(\hat{\gamma}(x,y))\,dx\,dy
		= H_1(\iterate{\rho}{k}) + H_1(G) - n\,\log \sigma. \\
	\intertext{For $F(\hat{\rho})$ one finds (by Jensen's inequality used on $u$ and by the Lipschitz property of $v$):}
	\label{eq:OneStepFSmoothing}
	F(\hat{\rho}) & \leq U(\iterate{\rho}{k}) + V(\iterate{\rho}{k}) + C \cdot \sigma = F(\iterate{\rho}{k}) + C \cdot \sigma
\end{align}
where the constant $C$ contains the Lipschitz constant $L$ and the finite ``first moment'' $\int G(x)\,|x|\,dx$ of $G$. We fix now $\sigma = \sqrt{\veps}$ and summarize:
\begin{align}
	J_{\veps,\tau}(\iterate{\rho}{k},\hat{\rho}) & \leq \frac{1}{2\,\tau} \left(
		(c,\hat{\gamma}) + \veps\,H_2(\hat{\gamma}) \right)
		+ F(\hat{\rho}) \nonumber \\
	& \leq \frac{\veps}{\tau} \left( C + |\log\veps|\,C \right) + \frac{\veps}{2\,\tau} H_1(\iterate{\rho}{k}) + \sqrt{\veps} \, C + F(\iterate{\rho}{k}).
	\label{eq:OneStepRight}
\end{align}
For any $\gamma \in \PiAc(\iterate{\rho}{k},\iterate{\rho}{k+1})$ have
\begin{align}
	H_2(\gamma) & \geq H_1(\iterate{\rho}{k}) + H_1(\iterate{\rho}{k+1})\,. \\
	\intertext{Consequently}
	J_{\veps,\tau}(\iterate{\rho}{k},\iterate{\rho}{k+1}) & = \frac{1}{2\,\tau} \left(
		(c,\iterate{\gamma}{k+1}) + \veps\,H_2(\iterate{\gamma}{k+1}) \right) + F(\iterate{\rho}{k+1}) \nonumber \\
		& \geq \frac{1}{2\,\tau} \left(
		(c,\iterate{\gamma}{k+1}) + \veps\,H_1(\iterate{\rho}{k}) + \veps\,H_1(\iterate{\rho}{k+1}) \right) + F(\iterate{\rho}{k+1})\,.
		\label{eq:OneStepLeft}
\end{align}
Since we know $J_{\veps,\tau}(\iterate{\rho}{k},\iterate{\rho}{k+1}) \leq J_{\veps,\tau}(\iterate{\rho}{k},\hat{\rho})$ for any $\hat{\rho} \in \probac(\R^n)$ we combine \eqref{eq:OneStepRight} and \eqref{eq:OneStepLeft} to obtain
\begin{align}
	(c,\iterate{\gamma}{k+1}) & \leq
		\veps \left(1 + |\log \veps|\right)\,C
		- \veps\,H_1(\iterate{\rho}{k+1})
		+ \tau\,\sqrt{\veps}\,C + 2\,\tau\,F(\iterate{\rho}{k}) - 2\,\tau\,F(\iterate{\rho}{k+1}) \,. 
\end{align}
The statement follows now from assumption \eqref{eq:OneStepEpsBound}.
\end{proof}

\subsubsection{Multiple Steps}
\begin{proposition}[Bounding Second Moments.]
\thlabel{thm:MultiStepMoments}
Let $T>0$ be a real, positive, finite constant. Then there is a constant $0 < C < \infty$ such that for any pair of parameters $(\tau, N)$ with $\tau \cdot N \leq T$ and any $\veps$ satisfying \eqref{eq:OneStepEpsBound}, we have
\begin{align}
	\label{eq:MultiStepMomentsUniform}
	M(\iteratePar{\rho}{k}) < C
\end{align}
for any $k \leq N$.
\end{proposition}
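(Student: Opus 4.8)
The plan is to obtain a uniform second-moment bound by telescoping the one-step estimate \eqref{eq:OneStepBoundHneg} from \thref{thm:OneStepBound} and controlling the negative-entropy term that appears on its right-hand side. First I would recall that for the optimal coupling $\iterate{\gamma}{k+1} \in \PiAc(\iterate{\rho}{k},\iterate{\rho}{k+1})$ one has, by the elementary inequality $|y|^2 \le 2|x|^2 + 2|x-y|^2$ and the fact that the first marginal of $\iterate{\gamma}{k+1}$ is $\iterate{\rho}{k}$,
\begin{align}
	M(\iterate{\rho}{k+1}) & \leq 2\,M(\iterate{\rho}{k}) + 2\,(c,\iterate{\gamma}{k+1})\,,
	\label{eq:PlanMomentStep}
\end{align}
so it suffices to control the accumulated transport cost $\sum_{j} (c,\iterate{\gamma}{j+1})$ over $k \le N$ steps. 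Summing \eqref{eq:OneStepBoundHneg} from $j=0$ to $k-1$, the free-energy terms telescope to $2\,\tau\,(F(\iterate{\rho}{0}) - F(\iterate{\rho}{k}))$; since $F(\iterate{\rho}{0}) < \infty$ is a fixed finite number, $\tau\,N \le T$, and $F$ is bounded below by $-C(1+M(\cdot))^\alpha$ (via \eqref{eq:IntroFLBound} and $v \ge 0$), the telescoped contribution is bounded by $C\,T + C\,\tau\,(1+M(\iterate{\rho}{k}))^\alpha$. The term $\tau^2 C_2$ summed $N$ times gives at most $\tau\,N\,C_2 \le T\,C_2$.

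The delicate term is $\tau^2\,C_1\,\sum_{j=0}^{k-1}\Hneg(\iterate{\rho}{j+1})$. To handle it I would use that, among transport plans with fixed marginals, the entropy $H_2$ is bounded below by the sum $H_1(\iterate{\rho}{j}) + H_1(\iterate{\rho}{j+1})$ of the marginal entropies, together with the moment–entropy bound \thref{thm:EntropyMomentBound}, which controls the negative part of $H_1$ by $\Hneg(\rho) \le C\,(1 + M(\rho))^\alpha$ for $\alpha \in (n/(n+2),1)$. Hence each $\Hneg(\iterate{\rho}{j+1}) \le C\,(1+M(\iterate{\rho}{j+1}))^\alpha$, and the accumulated entropy contribution is bounded by $\tau^2\,C_1\,C\,\sum_{j=0}^{k-1}(1 + M(\iterate{\rho}{j+1}))^\alpha \le \tau\,C\,\sum_{j=0}^{k-1}\tau\,(1+M(\iterate{\rho}{j+1}))^\alpha$, using $\tau^2 \le \tau\,T$ (for $\tau$ bounded, which we may assume since $\tau N \le T$ forces nothing but we can restrict to $\tau \le 1$ say). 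Crucially the factor $\tau$ in front makes this a Grönwall-type contribution rather than an $O(1)$ obstruction.

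Collecting everything, with $m_k \eqdef M(\iterate{\rho}{k})$ one arrives at an inequality of the shape
\begin{align}
	m_{k} & \leq A + B\,\tau \sum_{j=0}^{k} (1 + m_j)^\alpha
	\label{eq:PlanGronwall}
\end{align}
for constants $A,B$ depending only on $T$, $C_1$, $F(\rho_0)$, $M(\rho_0)$ and the structural constants, but not on $\tau$, $N$, $\veps$ or $k$. Since $\alpha < 1$, the map $s \mapsto (1+s)^\alpha$ is sublinear, so a discrete Grönwall / Bihari argument applies: one shows by induction on $k$ that $m_k$ stays below the solution of the corresponding continuous ODE $\dot{y} = B(1+y)^\alpha$ with $y(0) = A$ evaluated at time $T$, which is finite because sublinear growth precludes blow-up in finite time. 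This yields the desired uniform bound $M(\iterate{\rho}{k}) < C$ for all $k \le N$ and all admissible $(\tau,N,\veps)$.

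The main obstacle is making the last step fully rigorous: one must be careful that \eqref{eq:PlanGronwall} genuinely has the accumulated-sum-with-small-coefficient structure (so that the entropy term, which naïvely looks like it could grow with the number of steps, is in fact tamed by the $\tau$ prefactor) and that the sublinear discrete Grönwall lemma is applied with constants uniform in all parameters. A secondary technical point is that \eqref{eq:PlanMomentStep} and the telescoping must be combined correctly — rather than bounding $m_{k+1}$ by $2m_k + \ldots$ and iterating (which would produce an unwanted $2^k$ factor), one should sum the one-step cost bounds first and only then invoke \eqref{eq:PlanMomentStep} once, or equivalently observe directly that $m_k \le 2^k m_0 + \ldots$ is wasteful and instead track $\sum_j (c,\iterate{\gamma}{j+1})$ as the primary quantity and feed it back through a single application of the moment inequality after summation.
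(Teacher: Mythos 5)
Your plan follows the same overall strategy as the paper: sum the one-step estimate \eqref{eq:OneStepBoundHneg}, bound the accumulated negative entropy by second moments via \thref{thm:EntropyMomentBound}, and close with a sublinear Gr\"onwall-type self-bounding inequality. You also correctly flag the central hazard --- iterating $M(\iterate{\rho}{k+1}) \le 2M(\iterate{\rho}{k}) + 2(c,\iterate{\gamma}{k+1})$ produces an unusable $2^k$ factor --- and you sense that one must aggregate the per-step costs first and then pass to the moments ``once.'' However, the concrete mechanism that makes this work is not identified, and ``a single application of the moment inequality after summation'' does not close as written, since your \eqref{eq:PlanMomentStep} is itself a one-step inequality. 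The paper's fix is to use the global estimate $M_k \le 2M_0 + 2W^2(\iterate{\rho}{0},\iterate{\rho}{k})$, then the triangle inequality for $W$, then Cauchy--Schwarz, yielding $M_k \le 2M_0 + 2k\sum_{l=0}^{k-1} W^2(\iterate{\rho}{l},\iterate{\rho}{l+1}) \le 2M_0 + 2k\sum_{l=0}^{k-1}(c,\iterate{\gamma}{l+1})$. The factor $k$ produced by Cauchy--Schwarz is not incidental: combined with the $\tau^2$ in \eqref{eq:OneStepBoundHneg} and $k\tau \le T$, it is exactly what turns the entropy sum into a $\tau$-prefactored sum, $2k\tau^2 C\sum_l \Hneg(\iterate{\rho}{l}) \le 2TC\,\tau\sum_l(1+M_l)^\alpha$, and similarly makes the telescoped free-energy term contribute at most $C(1+M_k)^\alpha$. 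Without this step there is no valid route from the accumulated one-step costs to a bound on $M_k$.

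As a minor stylistic comparison, once the self-bounding inequality is in place the paper uses the running maximum $\ol{M}_k = \max_{0\le l\le k} M_l$, bounds $\tau\sum_{l\le k}(1+M_l)^\alpha \le T(1+\ol{M}_k)^\alpha$ directly via $k\tau \le T$, and concludes $\ol{M}_k \le C(1+(\ol{M}_k+1)^\alpha)$, hence $\ol{M}_k \le C$ by sublinearity; your Bihari/ODE-comparison route reaches the same conclusion but is slightly heavier machinery than needed. Overall: same approach, one genuinely missing ingredient (triangle inequality + Cauchy--Schwarz and the role of the resulting factor $k$).
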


\begin{proof}
Write $\iterate{\rho}{k}$ for $\iteratePar{\rho}{k}$ and $M_k$ for $M(\iterate{\rho}{k})$. Using \eqref{eq:W2MomentBound} we have
\begin{align*}
	M_k & \leq 2\,M_0 + 2\,W^2(\iterate{\rho}{0},\iterate{\rho}{k}) \\
	& \leq 2\,M_0 + 2\,\left( \sum_{l=0}^{k-1} \sqrt{W^2(\iterate{\rho}{l},\iterate{\rho}{l+1})} \right)^2 \\
	& \leq 2\,M_0 + 2\,k\,\left( \sum_{l=0}^{k-1} W^2(\iterate{\rho}{l},\iterate{\rho}{l+1}) \right)\,. \\
	\intertext{The second inequality is due to the triangle inequality for $W_2$, the third is due to the Cauchy-Schwarz inequality. Since $W^2(\iterate{\rho}{l},\iterate{\rho}{l+1}) \leq (c,\iterate{\gamma}{l+1})$ and with \eqref{eq:OneStepBoundHneg} we get:}
M_k	& \leq 2\,M_0 + 2\,k\,\left(
		k\,\tau^2\,C
		+ \tau^2\,C\sum_{l=1}^{k} \Hneg(\iterate{\rho}{l})
		+ 2\,\tau F(\iterate{\rho}{0}) - 2\,\tau\,F(\iterate{\rho}{k})
		\right). \\
	\intertext{Since $k \leq N$ and $N \cdot \tau \leq T$, with \thref{thm:EntropyMomentBound} and Assumption \ref{asp:FreeEnergy} we obtain for some $\alpha < 1$:}
	M_k & \leq C + \tau \, C \sum_{l=1}^{k} (M_l + 1)^\alpha + C\,(M_k + 1)^\alpha. \\
	\intertext{Let now $\ol{M}_k = \max_{0 \leq l \leq k} M_l$. Then}
	M_k & \leq C \left(1+ (\ol{M}_k + 1)^\alpha \right)
\end{align*}
and eventually $\ol{M}_k \leq C \left(1+ (\ol{M}_k + 1)^\alpha \right)$ which implies $\ol{M}_k \leq C$ for a suitable constant $C$.
\end{proof}

Analogous to \thref{thm:MultiStepMoments} one has the following bounds.

\begin{corollary}
	There exists a $C \in (0,\infty)$ such that for every $k \leq N$
	\begin{align}
		\label{eq:MultiStepEntropyUniform}
		H_1(\iteratePar{\rho}{k}) & \geq -C\,, &
		\Hneg(\iteratePar{\rho}{k}) & \leq C\,, &
		F(\iteratePar{\rho}{k}) & \geq -C\,.
	\end{align}
\end{corollary}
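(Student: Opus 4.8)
The plan is to derive all three bounds from the one-step estimate \eqref{eq:OneStepBoundHneg} together with the uniform second-moment bound just established in \thref{thm:MultiStepMoments}, reusing essentially the same telescoping argument. First I would observe that, since $c$ is non-negative, $(c,\iterate{\gamma}{l+1}) \geq 0$, so \eqref{eq:OneStepBoundHneg} gives for each $l$
\begin{equation*}
	2\,\tau\,F(\iterate{\rho}{l+1}) \leq \tau^2\,C_2 + \tau^2\,C_1\,\Hneg(\iterate{\rho}{l+1}) + 2\,\tau\,F(\iterate{\rho}{l})\,.
\end{equation*}
Summing this from $l=0$ to $k-1$ and telescoping the $F$ terms yields
\begin{equation*}
	F(\iterate{\rho}{k}) \leq F(\iterate{\rho}{0}) + \tfrac{1}{2}\,\tau\,k\,C_2 + \tfrac{1}{2}\,\tau\,C_1 \sum_{l=1}^{k} \Hneg(\iterate{\rho}{l})\,.
\end{equation*}
Since $k\,\tau \leq N\,\tau \leq T$, the middle term is bounded by a constant. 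For the last term I would use \thref{thm:EntropyMomentBound} to bound $\Hneg(\iterate{\rho}{l}) \leq C\,(1 + M(\iterate{\rho}{l}))^\alpha$, and then invoke \thref{thm:MultiStepMoments}, which gives $M(\iterate{\rho}{l}) < C$ uniformly in $l \leq N$; hence $\tau \sum_{l=1}^k \Hneg(\iterate{\rho}{l}) \leq \tau\,k\,C \leq T\,C$ is also bounded by a constant. This establishes $F(\iteratePar{\rho}{k}) \leq C$ (an upper bound), but we want the lower bound $F(\iteratePar{\rho}{k}) \geq -C$: that follows directly from Remark \ref{rem:FreeEnergyUImplications}, namely $U(\mu) \geq -C\,(1+M(\mu))^\alpha$, together with $V \geq 0$ and the uniform moment bound of \thref{thm:MultiStepMoments}. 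So the third inequality in \eqref{eq:MultiStepEntropyUniform} is in fact immediate and does not even need the telescoping.

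Next, the entropy bounds. For the lower bound $H_1(\iteratePar{\rho}{k}) \geq -C$, I would again apply \thref{thm:EntropyMomentBound}, which states (in the form used throughout the paper) that $H_1(\mu) \geq -C\,(1+M(\mu))^\alpha$ for some $\alpha \in (0,1)$; combined with $M(\iteratePar{\rho}{k}) < C$ from \thref{thm:MultiStepMoments} this gives $H_1(\iteratePar{\rho}{k}) \geq -C$ at once. The bound $\Hneg(\iteratePar{\rho}{k}) \leq C$ is just a restatement of the same fact, since by definition $\HnegB = \Hneg$ controls the negative part of the entropy integrand and $\Hneg(\mu) \leq C\,(1+M(\mu))^\alpha$ is precisely the content of \thref{thm:EntropyMomentBound}.

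In short, the corollary is a straightforward consequence of (i) \thref{thm:MultiStepMoments}, which supplies uniform bounds on $M(\iteratePar{\rho}{k})$, and (ii) \thref{thm:EntropyMomentBound} plus Remark \ref{rem:FreeEnergyUImplications}, which convert moment bounds into entropy and free-energy bounds; the telescoping of \eqref{eq:OneStepBoundHneg} is available if one prefers to bound $F$ from above as well, but is not needed for the stated one-sided estimates. I do not expect any genuine obstacle here — the only mild subtlety is bookkeeping the constants so that they remain independent of $\veps$, $\tau$, $N$ and $k$ (for $k \leq N$, $N\,\tau \leq T$), exactly as in the proof of \thref{thm:MultiStepMoments}; once the uniform moment bound is in hand, everything else is a one-line application of the entropy–moment inequality.
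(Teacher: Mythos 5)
Your final argument is exactly the paper's: the three bounds follow immediately from the uniform moment bound of \thref{thm:MultiStepMoments} combined with \thref{thm:EntropyMomentBound} (for the entropy bounds) and Remark \ref{rem:FreeEnergyUImplications} together with $V \geq 0$ (for the lower bound on $F$). The telescoping detour you start with, which would yield an upper bound on $F$, is not needed for the stated one-sided estimates — and indeed you correctly recognize this yourself; the paper skips it entirely.
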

\begin{proof}
This follows immediately from \thref{thm:MultiStepMoments}, \thref{thm:EntropyMomentBound} and Assumption \ref{asp:FreeEnergy} (see also Remark \ref{rem:FreeEnergyUImplications}).
\end{proof}
\begin{corollary}[All Steps]
	\thlabel{thm:GammaAllSteps}
	There is a finite, positive $C$ such that
	\begin{equation}
		\sum_{k=0}^{N-1} (c,\iteratePar{\gamma}{k+1}) \leq \tau\,C\,.
	\end{equation}
\end{corollary}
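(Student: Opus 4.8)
The plan is to sum the one-step estimate over all $N$ time steps and exploit the telescoping of the free-energy terms together with the a priori bounds already established. Concretely, I would start from \eqref{eq:OneStepBound} of \thref{thm:OneStepBound}, which holds for every $k=0,\ldots,N-1$ under the standing assumption \eqref{eq:OneStepEpsBound}, and add these inequalities:
\begin{equation*}
	\sum_{k=0}^{N-1} (c,\iteratePar{\gamma}{k+1}) \leq N\,\tau^2\,C_2 - \veps \sum_{k=0}^{N-1} H_1(\iteratePar{\rho}{k+1}) + 2\,\tau\,\big( F(\iteratePar{\rho}{0}) - F(\iteratePar{\rho}{N}) \big),
\end{equation*}
where the $F$-terms have telescoped. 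It then remains to check that each of the three terms on the right is $O(\tau)$.

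The first term is immediate: since $N\,\tau \leq T$, we get $N\,\tau^2\,C_2 \leq T\,C_2\,\tau$. For the last term, $F(\iteratePar{\rho}{0}) = F(\rho_0)$ is a fixed finite number (the scheme is only defined when $F(\rho_0)<\infty$), and $F(\iteratePar{\rho}{N}) \geq -C$ by \eqref{eq:MultiStepEntropyUniform}, so $2\,\tau\,(F(\rho_0) - F(\iteratePar{\rho}{N})) \leq C\,\tau$.

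The middle term is the only place needing a little care: a crude bound using $N$ summands each of order one would only give something of order $N = O(1/\tau)$, which is far too large. Instead I would write $-H_1(\iteratePar{\rho}{k+1}) = \Hneg(\iteratePar{\rho}{k+1}) - \Hpos(\iteratePar{\rho}{k+1}) \leq \Hneg(\iteratePar{\rho}{k+1}) \leq C$ using \eqref{eq:MultiStepEntropyUniform}, so that $-\veps \sum_{k=0}^{N-1} H_1(\iteratePar{\rho}{k+1}) \leq \veps\,N\,C$, and then invoke \eqref{eq:OneStepEpsBound} in the form $\veps \leq \veps\,|\log\veps| \leq C_1\,\tau^2$ to conclude $\veps\,N\,C \leq C_1\,C\,\tau^2\,N \leq C_1\,C\,T\,\tau$. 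This is precisely the step where the joint scaling $\veps\,|\log\veps| = O(\tau^2)$ is used. Equivalently, one may simply sum \eqref{eq:OneStepBoundHneg} directly, which has $\tau^2\,C_1\,\Hneg(\iteratePar{\rho}{k+1})$ in place of $-\veps\,H_1(\iteratePar{\rho}{k+1})$ from the outset, and the same bound $\Hneg \leq C$ applies.

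Adding the three estimates yields $\sum_{k=0}^{N-1} (c,\iteratePar{\gamma}{k+1}) \leq C\,\tau$ for a suitable finite $C$ depending only on $T$, $C_1$, $\rho_0$ and the constants of Assumption \ref{asp:FreeEnergy}. The only real ``obstacle'' is bookkeeping — making sure every quantity collected scales like $\tau$ rather than being merely bounded — and that is exactly what the uniform moment and entropy bounds of \thref{thm:MultiStepMoments} and its corollary are there to supply.
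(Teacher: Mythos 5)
Your proof is correct and takes essentially the same route as the paper's one-line proof: sum the one-step estimate, telescope the free-energy terms, bound the entropy sum using the uniform estimate \eqref{eq:MultiStepEntropyUniform} together with the scaling \eqref{eq:OneStepEpsBound}, and use $N\tau\leq T$. You start from \eqref{eq:OneStepBound} where the paper sums \eqref{eq:OneStepBoundHneg} directly, but as you yourself observe these are equivalent once $-H_1\leq \Hneg\leq C$ and $\veps\leq C_1\tau^2$ are invoked, so there is no genuine difference.
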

\begin{proof}
	Sum up inequality \eqref{eq:OneStepBoundHneg} and use \eqref{eq:MultiStepEntropyUniform}.
\end{proof}

\begin{corollary}\thlabel{thm:Holderest}
For $i,j$ with $0 \leq i \leq j \leq N-1$, we have
\begin{align}
	W^2(\iteratePar{\rho}{i},\iteratePar{\rho}{j}) & \leq \left( \sum_{k=i}^{j-1} W(\iteratePar{\rho}{k},\iteratePar{\rho}{k+1}) \right)^2
		\leq (j-i) \cdot \sum_{k=0}^{N-1} W^2(\iteratePar{\rho}{k},\iteratePar{\rho}{k+1}) \nonumber \\
	& \leq (j-i) \cdot \sum_{k=0}^{N-1} (c,\iteratePar{\gamma}{k}) \leq (j-i) \cdot \tau \cdot C.
\end{align}
\end{corollary}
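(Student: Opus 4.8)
The plan is to simply chain together four elementary estimates, each of which has already been established earlier in the excerpt. The whole statement is a one-line computation once the ingredients are assembled, so the "proof" is really just bookkeeping.

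First I would invoke the triangle inequality for the $2$-Wasserstein distance $W$ on $\prob(\R^n)$ (recalled around \eqref{eq:W2}) along the chain $\iteratePar{\rho}{i}, \iteratePar{\rho}{i+1}, \ldots, \iteratePar{\rho}{j}$, obtaining $W(\iteratePar{\rho}{i},\iteratePar{\rho}{j}) \leq \sum_{k=i}^{j-1} W(\iteratePar{\rho}{k},\iteratePar{\rho}{k+1})$, and then square both sides to get the first inequality. Next I would apply the Cauchy--Schwarz inequality to the sum of the $(j-i)$ nonnegative terms $W(\iteratePar{\rho}{k},\iteratePar{\rho}{k+1})$, which gives $\big(\sum_{k=i}^{j-1} W(\iteratePar{\rho}{k},\iteratePar{\rho}{k+1})\big)^2 \leq (j-i)\sum_{k=i}^{j-1} W^2(\iteratePar{\rho}{k},\iteratePar{\rho}{k+1})$; enlarging the index range from $\{i,\ldots,j-1\}$ to $\{0,\ldots,N-1\}$ (all summands being nonnegative) yields the second inequality.

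For the third inequality I would use that $\iteratePar{\gamma}{k+1} \in \PiAc(\iteratePar{\rho}{k},\iteratePar{\rho}{k+1})$ is in particular an admissible coupling, so by the very definition of $W^2$ as an infimum over $\Pi(\iteratePar{\rho}{k},\iteratePar{\rho}{k+1})$ we have $W^2(\iteratePar{\rho}{k},\iteratePar{\rho}{k+1}) \leq (c,\iteratePar{\gamma}{k+1})$ for each $k$; summing over $k$ gives $\sum_{k=0}^{N-1} W^2(\iteratePar{\rho}{k},\iteratePar{\rho}{k+1}) \leq \sum_{k=0}^{N-1} (c,\iteratePar{\gamma}{k+1})$ (the index shift here is the only mild subtlety, and it matches the summation convention of \thref{thm:GammaAllSteps}). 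Finally, the last inequality is exactly the content of \thref{thm:GammaAllSteps}, which bounds $\sum_{k=0}^{N-1} (c,\iteratePar{\gamma}{k+1})$ by $\tau\,C$ for a suitable finite constant $C$.

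There is no real obstacle here: every step is either a standard metric inequality (triangle, Cauchy--Schwarz) or a direct quotation of an already-proved estimate (admissibility of $\iteratePar{\gamma}{k+1}$ as a coupling, and \thref{thm:GammaAllSteps}). The only thing to be careful about is the indexing of the optimal plans $\iteratePar{\gamma}{k}$ versus $\iteratePar{\gamma}{k+1}$ so that the final sum lines up with the hypothesis of \thref{thm:GammaAllSteps}.
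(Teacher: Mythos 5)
Your proposal is correct and matches the paper's (implicit) reasoning: the paper gives no separate proof for this corollary, presenting the chain of inequalities as self-explanatory, and your four steps — triangle inequality for $W$, Cauchy--Schwarz, admissibility of $\iteratePar{\gamma}{k+1}$ as a coupling, and \thref{thm:GammaAllSteps} — are exactly what is needed (and parallel the argument already used in the proof of \thref{thm:MultiStepMoments}). Your remark about the index shift between $\iteratePar{\gamma}{k}$ in the corollary's display and $\iteratePar{\gamma}{k+1}$ in \thref{thm:GammaAllSteps} is apt; this is a small notational slip in the paper, and your alignment with \thref{thm:GammaAllSteps} is the intended reading.
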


\begin{corollary}
	\thlabel{thm:FUBoundUniform}
	There exists a $C<\infty$ such that for all $k \leq N$, one has $F(\iteratePar{\rho}{k}) < C$.
\end{corollary}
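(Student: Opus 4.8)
The plan is to turn the one-step estimate \eqref{eq:OneStepBound} into a discrete Gr\"onwall-type recursion for the sequence $k \mapsto F(\iteratePar{\rho}{k})$. Since $c \geq 0$ we have $(c,\iteratePar{\gamma}{k+1}) \geq 0$, so \thref{thm:OneStepBound} (specifically \eqref{eq:OneStepBound}) immediately yields
\begin{align*}
  2\,\tau\,F(\iteratePar{\rho}{k+1}) \leq \tau^2\,C_2 - \veps\,H_1(\iteratePar{\rho}{k+1}) + 2\,\tau\,F(\iteratePar{\rho}{k})\,.
\end{align*}
First I would control the entropy term: by \eqref{eq:MultiStepEntropyUniform} one has $H_1(\iteratePar{\rho}{k+1}) \geq -C$ uniformly in $k \leq N$, hence $-\veps\,H_1(\iteratePar{\rho}{k+1}) \leq \veps\,C$, and by \eqref{eq:OneStepEpsBound} we have $\veps \leq C_1\,\tau^2$, so this term is bounded by $C_1\,C\,\tau^2$. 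Dividing by $2\,\tau$ and absorbing all constants, there is a uniform constant $C_3 \in (0,\infty)$ with $F(\iteratePar{\rho}{k+1}) \leq F(\iteratePar{\rho}{k}) + C_3\,\tau$ for all $k \leq N-1$.

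Then I would iterate this recursion starting from $k=0$, which gives $F(\iteratePar{\rho}{k}) \leq F(\iteratePar{\rho}{0}) + k\,\tau\,C_3 \leq F(\rho_0) + T\,C_3$, using $k\,\tau \leq N\,\tau \leq T$. Since by assumption $F(\rho_0) < \infty$ and $C_3$, $T$ are finite and independent of $\veps$, $\tau$, $N$ and $k$, the right-hand side is a finite constant, which is the desired bound.

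There is no genuine obstacle here; the only point requiring care — as in the proof of \thref{thm:MultiStepMoments} — is bookkeeping of constants: one must check that $C_2$ from \thref{thm:OneStepBound}, the entropy lower bound in \eqref{eq:MultiStepEntropyUniform}, and $C_1$ from \eqref{eq:OneStepEpsBound} are all uniform in the discretization parameters, so that the accumulated increment $k\,\tau\,C_3$ over at most $N \leq T/\tau$ steps stays bounded by $T\,C_3$. This telescoping structure is exactly the one already exploited in the a priori estimates above.
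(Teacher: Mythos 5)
Your proof is correct and follows essentially the same route as the paper: drop the non-negative transport cost in the one-step estimate from \thref{thm:OneStepBound}, control the entropy term uniformly via \eqref{eq:MultiStepEntropyUniform} and \eqref{eq:OneStepEpsBound}, and telescope using $N\tau \leq T$. The only cosmetic difference is that you work from \eqref{eq:OneStepBound} (the $\veps\,H_1$ form) whereas the paper sums \eqref{eq:OneStepBoundHneg} (the $\tau^2\,\Hneg$ form); both reduce to the same bookkeeping.
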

\begin{proof}
	Since $(c,\iteratePar{\gamma}{k})\geq 0$ get from summing \eqref{eq:OneStepBoundHneg} over $l=0,\ldots,k-1$:
	\begin{equation*}
		F(\iteratePar{\rho}{k}) \leq k\,\tau\,C + \tau\,C \sum_{l=0}^{k-1} \Hneg(\iteratePar{\rho}{l+1}) + F(\iteratePar{\rho}{0}), 
	\end{equation*}
	with uniform bound on $\Hneg$, \eqref{eq:MultiStepEntropyUniform} and $k \leq N$, $N \cdot \tau \leq T$ get $F(\iteratePar{\rho}{k}) \leq C$.
\end{proof}

\subsection{Convergence}

The a priori estimates from the previous paragraph being roughly the same as for the standard JKO scheme, getting enough compactness to be able to pass to the limit in the Euler-Lagrange equation can be done by rather standard arguments. 

\begin{proposition}
	\thlabel{thm:TVBound}
	Let $T \in (0,\infty)$ and $N \in \N$, $\tau > 0$ such that $N \cdot \tau = T$.
	The function $(\interpEpsTauN)^m$ is in $L^1([0,T], W^{1,1}(\R^n))$, and there is a constant $C \in (0,\infty)$ (which does not depend on $\veps$, $\tau$ nor $N$), such that 
\begin{equation}
	\int_0^T \int_{\R^n} \left((\interpEpsTauN)^m(t,y) + \vert \nabla (\interpEpsTauN)^m\vert(t,y)\right)\,d y\,d t \le C,
\end{equation}
where the exponent $m$ is given by Assumption \ref{asp:FreeEnergy}, equations (\ref{growth1},\ref{growth2}).
\end{proposition}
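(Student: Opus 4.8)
The plan is to derive a per-step $W^{1,1}$ bound on a suitable power of the iterates from the Euler--Lagrange equation \eqref{eq:EulerLagrangeEulerLagrange}, and then sum these bounds using the cumulative estimate $\sum_k (c,\iteratePar{\gamma}{k}) \leq C\tau$ of \thref{thm:GammaAllSteps}. Write $\rho^k = \iteratePar{\rho}{k}$, $\gamma^k = \iteratePar{\gamma}{k}$, let $L$ be the Lipschitz constant of $v$, and introduce the \emph{effective pressure} of a regularized step, $\tilde p(s) \eqdef p(s) + \frac{\veps}{2\tau}\,s$; this is the pressure associated with the internal energy density $s \mapsto u(s) + \frac{\veps}{2\tau}\,s \log s$, so that the entropic correction is absorbed into a bona fide pressure.

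For the zeroth-order term, by \eqref{growth2} and $v \geq 0$ we have $\int_{\R^n}(\rho^k)^m \leq C\,(U(\rho^k) + C\,M(\rho^k)) \leq C\,(F(\rho^k) + C\,M(\rho^k))$, which is uniformly bounded for $k \leq N$ by \thref{thm:FUBoundUniform} and \thref{thm:MultiStepMoments}; integrating the piecewise constant interpolation in time yields $\int_0^T\!\int_{\R^n}(\interpEpsTauN)^m \leq T\,C$. For the gradient term, fix $k \geq 1$, set $\nu = \rho^k$, $\gamma = \gamma^k$, insert the formula \eqref{eq:IntroFVariation} for $\delta F(\nu,w)$ into \eqref{eq:EulerLagrangeEulerLagrange} and rearrange to obtain, for every $w \in C^\infty_c(\R^n,\R^n)$,
\begin{equation*}
	\int_{\R^n}\tilde p(\nu(y))\,(\ddiv w)(y)\,dy = -\frac{1}{\tau}\int \la w(y),x-y \ra\,d\gamma(x,y) + \int_{\R^n}\la \nabla v(y), w(y) \ra\,\nu(y)\,dy .
\end{equation*}
Since $\tilde p(\nu) \leq C\nu^m + \frac{\veps}{2\tau}\nu \in L^1(\R^n)$, taking the supremum over $\|w\|_\infty \leq 1$ and using $|\la w,x-y \ra| \leq |x-y|$, $\int |x-y|\,d\gamma \leq \sqrt{(c,\gamma)}$ (Cauchy--Schwarz, $\gamma$ a probability measure) and $|\la \nabla v,w \ra| \leq L$ shows that $\tilde p(\rho^k)$ has bounded variation, with total mass of its gradient at most $\frac{1}{\tau}\sqrt{(c,\gamma^k)} + L$. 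Moreover, for $\veps > 0$ fixed the iterate $\rho^k$ is continuous (being a marginal of the entropic-optimal plan, whose density has the Gibbs form $a(x)\,b(y)\,e^{-|x-y|^2/\veps}$, so that $\rho^k$ is, up to multiplication by $b$, a Gaussian convolution), hence $\tilde p(\rho^k) \in W^{1,1}(\R^n)$.

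To pass from $\tilde p(\rho^k)$ to $(\rho^k)^m$, note that by \eqref{growth1} we have $\tilde p'(s) \geq p'(s) \geq s^{m-1}/C$, so the map $r \mapsto (\tilde p^{-1}(r))^m$ has derivative $m\,(\tilde p^{-1}(r))^{m-1}/\tilde p'(\tilde p^{-1}(r)) \leq C\,m$; thus $(\rho^k)^m = g(\tilde p(\rho^k))$ with $g$ globally $Cm$-Lipschitz and $g(0)=0$, so $(\rho^k)^m \in W^{1,1}(\R^n)$ with $\int_{\R^n}|\nabla (\rho^k)^m| \leq C\,m\,\bigl(\tfrac{1}{\tau}\sqrt{(c,\gamma^k)} + L\bigr)$. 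Summing over the interpolation intervals and applying Cauchy--Schwarz together with \thref{thm:GammaAllSteps},
\begin{equation*}
	\int_0^T\!\int_{\R^n}|\nabla (\interpEpsTauN)^m| = \tau\sum_{k=0}^{N-1}\int_{\R^n}|\nabla (\rho^k)^m| \leq \tau\!\int_{\R^n}\!|\nabla \rho_0^m| + C\,m\Bigl(\sqrt{N\textstyle\sum_k(c,\gamma^k)} + N\tau L\Bigr) \leq C',
\end{equation*}
with $C'$ depending only on $T$, the fixed constants and (through the first term, using $N\tau = T$ hence $\tau \le T$) the datum $\rho_0$; the term $T\int|\nabla \rho_0^m|$ is finite provided $\rho_0^m \in W^{1,1}(\R^n)$, which we add to the standing hypotheses (for a merely $L^m$ datum the same computation still bounds $\int_\delta^T$ for every $\delta > \tau$, which is all the later compactness argument requires). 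Combined with the zeroth-order bound this proves the claim. The main obstacle is the middle step: one must fold the entropic term into a genuine pressure $\tilde p$ for which the lower bound $\tilde p' \gtrsim s^{m-1}$ of \eqref{growth1} persists, upgrade the smooth-test-function identity \eqref{eq:EulerLagrangeEulerLagrange} to a total-variation estimate by optimising over $\|w\|_\infty \leq 1$, and convert control of $\nabla \tilde p(\rho^k)$ into control of $\nabla (\rho^k)^m$ via the Lipschitz reparametrisation $r \mapsto (\tilde p^{-1}(r))^m$; the final summation is precisely where the per-step blow-up $\tfrac1\tau\sqrt{(c,\gamma^k)}$ is tamed by the $O(\tau)$ bound on $\sum_k (c,\gamma^k)$.
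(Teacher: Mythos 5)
Your overall strategy coincides with the paper's: fold the entropic term into the effective pressure $\tilde p(s)=p(s)+\tfrac{\veps}{2\tau}s$ (the paper's $f_{\veps,\tau}$), read off a per-step gradient bound from the Euler--Lagrange identity, convert it to a bound on $\nabla(\rho^k)^m$ via a Lipschitz reparametrisation driven by $p'(s)\geq s^{m-1}/C$, and finally tame the $\tau^{-1}\sqrt{(c,\gamma^k)}$ blow-up by Cauchy--Schwarz against $\sum_k(c,\gamma^k)\leq C\tau$. Your one-step composition $r\mapsto(\tilde p^{-1}(r))^m$ is a slightly slicker packaging of the paper's two-step inversion (first $\rho^k=f_{\veps,\tau}^{-1}(\mu^k)$, then $(\rho^k)^m$), and your explicit handling of the $k=0$ term is actually \emph{more} careful than the paper, which silently starts the Euler--Lagrange argument at $k\geq 1$ and then sums a slightly garbled index range.

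However, there is a genuine gap in the step where you upgrade bounded variation to $W^{1,1}$. Taking the supremum over $\|w\|_\infty\le 1$ in the Euler--Lagrange identity only proves that $\tilde p(\rho^k)\in\BV(\R^n)$ with total variation at most $\tau^{-1}\sqrt{(c,\gamma^k)}+\|\nabla v\|_\infty$; it cannot, on its own, rule out a singular part of $D\tilde p(\rho^k)$. Your attempted fix --- that $\rho^k$ is continuous because it is a marginal of a Gibbs-type plan, ``hence $\tilde p(\rho^k)\in W^{1,1}$'' --- is not a valid implication: a continuous BV function need not be $W^{1,1}$ (the Cantor--Vitali function is the standard counterexample), and the scaling factor $b$ in the Gibbs form $a(x)b(y)e^{-|x-y|^2/\veps}$ is a priori only measurable, so the ``Gaussian convolution'' intuition is not rigorous without a separate regularity argument for the Schr\"odinger potentials. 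The paper's route avoids the problem entirely: using the disintegration $\iterate{\gamma}{k}=\gamma_y^{(k)}\otimes\iterate{\rho}{k}$, the right-hand side of the Euler--Lagrange identity is rewritten as integration of $w$ against the explicit vector field
\begin{equation*}
V(y)=\tau^{-1}\iterate{\rho}{k}(y)\int_{\R^n}(x-y)\,d\gamma_y^{(k)}(x)+\iterate{\rho}{k}(y)\nabla v(y),
\end{equation*}
which is shown to lie in $L^1(\R^n,\R^n)$ (this is precisely the bound you obtain, but derived \emph{before} any supremum is taken). This identifies the distributional gradient of $\tilde p(\rho^k)$ with an $L^1$ function and directly yields $W^{1,1}$ membership. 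You should replace the continuity argument by this disintegration step; everything downstream of that point in your proof then stands.
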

\begin{proof}
Let us denote by $\iterate{\rho}{k}$ the density $\iteratePar{\rho}{k}$ and introduce the sequence of non-negative functions
	\begin{equation*}
		\iterate{\mu}{k} \eqdef \frac{\veps}{2\,\tau}\,\iterate{\rho}{k}+ p(\iterate{\rho}{k})\,.
	\end{equation*}
First, we study $\iterate{\mu}{k}$. From~\eqref{growth1}, \eqref{growth2}, and the fact that $\iterate{\rho}{k}\in L^1(\R^n)$ with finite energy $U$ and second moment (\thref{thm:MultiStepMoments,thm:FUBoundUniform}),  we see that $\iterate{\mu}{k}\in L^1(\R^n)$. Let us prove that in fact $\iterate{\mu}{k}\in W^{1,1}(\R^n)$.
	We start with the Euler-Lagrange equation~\eqref{eq:EulerLagrangeEulerLagrange} for step $k \geq 1$,
	\begin{equation*}
\begin{split}
0&=  -\tau^{-1} \int_{\R^n\times\R^n} \la w(y),x-y \ra \,d\iterate{\gamma}{k}(x,y)\\
&\qquad\qquad  - \int_{\R^n} \left(
			\left[\frac{\veps}{2\,\tau} \iterate{\rho}{k}(y) + p(\iterate{\rho}{k}(y)) \right]
				\ddiv w(y)
			+ \la \nabla v(y),w(y) \ra\,\iterate{\rho}{k}(y)
			\right) dy
\end{split}
	\end{equation*}
for all $w\in C^\infty_c(\R^n,\R^n)$. That is equivalent to 
\begin{equation}\label{eq:distribderiv}
  - \int_{\R^n} \iterate{\mu}{k}(x)\ddiv w(x)\,d x  = \tau^{-1} \int_{\R^n\times\R^n} \la w(y),x-y \ra \,d\iterate{\gamma}{k}(x,y)+ \int_{\R^n} \la \iterate{\rho}{k}(y)\nabla v(y),w(y) \ra dx.
\end{equation}
Since $\iterate{\gamma}{k}\in \PiAc(\iterate{\rho}{k-1},\iterate{\rho}{k})$, the disintegration theorem~\cite[Theorem 2.28]{AFP00} yields the existence of some measurable measure-valued map $y\mapsto \gamma_y^{(k)}$ such that $\iterate{\gamma}{k}= \gamma_y^{(k)}\otimes \iterate{\rho}{k}$ and $\abs{\gamma_y^{(k)}}(\R^n)=1$ for a.e.~$y\in\R^n$. Hence the first term on the r.h.s.\ may be written as 
\begin{equation}
  \tau^{-1} \int_{\R^n} \la w(y),\int_{\R^n} (x-y)\gamma_y^{(k)}(x)\,dx\ra \iterate{\rho}{k}(y)\,dy,
\end{equation}
and the function $y\mapsto \left(\int_{\R^n} (x-y)\gamma_y^{(k)}(x)\,dx\right) \iterate{\rho}{k}(y)$ is in $L^1(\R^n)$ since 
\begin{equation}\label{eq:majgamma}
  \int_{\R^n} \abs{\left(\int_{\R^n} (x-y)\gamma_y^{(k)}(x)\,dx\right)\iterate{\rho}{k}(y)}\,dy\leq \int_{\R^n\times \R^n} |x-y|\,d\iterate{\gamma}{k}(x,y) \leq (c,\iterate{\gamma}{k})^{1/2}.
\end{equation}
Moreover,  we see that $\iterate{\rho}{k}\nabla v\in L^1(\R^n)$, since
\begin{equation}\label{eq:majlip}
  \int_{\R^n}\abs{\iterate{\rho}{k}(x)\nabla v(x)}dx \leq \norm{\nabla v}_{\infty}<+\infty.
\end{equation}
As a result, the distributional derivative of $\iterate{\mu}{k}$ (see~\eqref{eq:distribderiv}) is representable by integration, and $\iterate{\mu}{k}\in W^{1,1}(\R^n)$. Combining~\eqref{eq:majgamma} and~\eqref{eq:majlip}, we also get 
\begin{equation}
  \norm{\nabla \iterate{\mu}{k}}_1\leq \tau^{-1}(c,\iterate{\gamma}{k})^{1/2}+\norm{\nabla v}_{\infty}.
\end{equation}
Summing over $k\in\{0,\ldots,N-1\}$, using the Cauchy-Schwarz inequality, \thref{thm:GammaAllSteps} and the fact that $N \cdot \tau \leq T < \infty$ we obtain
\begin{equation}\label{eq:majglob}
  \tau \sum_{k=0}^{1} \norm{\nabla \iterate{\mu}{k}}_1  \leq  \sqrt{N} \left(\sum_{k=0}^{N-1} (c,\iterate{\gamma}{k}) \right)^{1/2} + T\norm{\nabla v}_{\infty}
			\leq C,\end{equation}
    for some constant $C \in (0,\infty)$ which is independent of $N$, $\veps$, $\tau$. 

\newcommand{\fmap}{f_{\veps,\tau}}
Now, we turn to $\iterate{\rho}{k}$. Let $\fmap:[0,+\infty)\rightarrow [0,+\infty)$ be defined by $\fmap(s)\eqdef\frac{\veps}{2\,\tau}s + p(s)$. The mapping $\fmap$ is a homeomorphism with $\fmap(0)=0$, and one may check that its inverse $\fmap^{(-1)}$ is $\frac{2\tau}{\veps}$-Lipschitzian (using the non-negativity and monotonicity of $p$). By~\cite[Theorem 3.99]{AFP00}, we see that $\iterate{\rho}{k}=\fmap^{(-1)}(\iterate{\mu}{k})$ is in $W^{1,1}(\R^n)$, and $\nabla \iterate{\rho}{k}=(\fmap^{(-1)})'(\iterate{\mu}{k}) \nabla\iterate{\mu}{k}$. Since for $t>0$, by~\eqref{growth1},
\begin{equation*}
  (\fmap^{(-1)})'(t)=\left(\fmap'\left(\fmap^{(-1)}(t)\right)\right)^{-1}= \left(\frac{\veps}{2\,\tau} + p'\left(\fmap^{(-1)}(t)\right)\right)^{-1}\leq  \left(\frac{\veps}{2\,\tau} +\frac{1}{C} \left(\fmap^{(-1)}(t)\right)^{m-1}\right)^{-1},
\end{equation*}
we obtain that $\abs{\nabla \iterate{\rho}{k}}\leq \left(\frac{\veps}{2\,\tau} +\frac{1}{C} \left(\iterate{\rho}{k}\right)^{m-1}\right)^{-1}\abs{\nabla\iterate{\mu}{k}}$, so that $\abs{(\iterate{\rho}{k})^{m-1}\nabla \iterate{\rho}{k}}\leq C\abs{\nabla\iterate{\mu}{k}}$. From that inequality, we deduce that $(\iterate{\rho}{k})^{m}\in W^{1,1}(\R^n)$, with
\begin{align*}
\abs{\nabla(\iterate{\rho}{k})^{m}}&\leq C\abs{\nabla\iterate{\mu}{k}}.
\end{align*}
From~\eqref{eq:majglob}, we deduce that 
\begin{equation}
	\int_0^T \int_{\R^n} \vert \nabla (\interpEpsTauN)^m\vert \le C.
\end{equation}
It remains to prove the estimate on $(\interpEpsTauN)^m$. By~\thref{thm:FUBoundUniform} and~\thref{thm:MultiStepMoments} and~\eqref{growth2} and summing over $k\in\{1,\ldots, N\}$, we see that 
\begin{equation*}
	\int_0^T \int_{\R^n} (\interpEpsTauN)^m \le C. \qedhere
\end{equation*}
  \end{proof}

\begin{proposition}
	As $\veps, \tau \rightarrow 0$ in a way respecting \eqref{eq:OneStepEpsBound}, after choosing a suitable subsequence, $\interpEpsTauN$ converges strongly in $L^m((0,T)\times \R^n)$ to some $\interp : [0,T] \mapsto \probac(\R^n)$. The associated pressures converge strongly in $L^1((0,T)\times \R^n)$
\end{proposition}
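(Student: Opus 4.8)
\textit{Plan.} The plan is to obtain strong $L^m$ compactness of the interpolants $\interpEpsTauN$ by an Aubin--Lions--type argument combining the uniform spatial $W^{1,1}$-regularity of $(\interpEpsTauN)^m$ from \thref{thm:TVBound} with the uniform temporal H\"older estimate in the Wasserstein distance from \thref{thm:Holderest}, and then to transfer the convergence to the pressures via the growth bound \eqref{growth1}. All the \emph{a priori} estimates used below (\thref{thm:TVBound,thm:Holderest,thm:MultiStepMoments,thm:FUBoundUniform}) are uniform over parameters $(\veps,\tau,N)$ with $N\,\tau=T$ and \eqref{eq:OneStepEpsBound}, so the argument applies to an arbitrary sequence of such parameters with $\veps,\tau\to 0$.

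First I would collect the relevant uniform bounds. By \thref{thm:TVBound}, $(\interpEpsTauN)^m$ is bounded in $L^1((0,T)\times\R^n)$ and in $L^1(0,T;W^{1,1}(\R^n))$, hence, using the Sobolev embedding $W^{1,1}(\R^n)\hookrightarrow L^{n/(n-1)}(\R^n)$ (read as $L^\infty(\R)$ when $n=1$), also in $L^1(0,T;L^{n/(n-1)}(\R^n))$. By \thref{thm:FUBoundUniform}, \eqref{growth2} and the moment bound \thref{thm:MultiStepMoments} one has $\int_{\R^n}(\interpEpsTauN(t,\cdot))^m\,dx\le C$ for a.e.\ $t$, i.e.\ a uniform bound in $L^\infty(0,T;L^1(\R^n))$; interpolating this with the previous bound yields a uniform bound on $(\interpEpsTauN)^m$ in $L^{(n+1)/n}((0,T)\times\R^n)$. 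Moreover \thref{thm:MultiStepMoments} gives $M(\interpEpsTauN(t,\cdot))\le C$, whence the uniform tightness $\int_0^T\int_{\R^n\setminus B_R}\interpEpsTauN\,dx\,dt\le C\,T/R^2$; since then $\int_0^T |\{x:\interpEpsTauN(t,x)>1,\ |x|>R\}|\,dt\le C\,T/R^2$, H\"older's inequality together with the $L^{(n+1)/n}$ bound also gives $\int_0^T\int_{\R^n\setminus B_R}(\interpEpsTauN)^m\,dx\,dt\to 0$ uniformly as $R\to\infty$. In particular $\{(\interpEpsTauN)^m\}$ is equi-integrable and uniformly tight on $(0,T)\times\R^n$.

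For the compactness itself I would mollify. In space, the elementary inequality $|a-b|\le|a^m-b^m|^{1/m}$ for $a,b\ge 0$, $m\ge1$ (superadditivity of $s\mapsto s^m$), combined with H\"older's inequality and $\|(\interpEpsTauN)^m(t,\cdot+h)-(\interpEpsTauN)^m(t,\cdot)\|_{L^1}\le|h|\,\|\nabla(\interpEpsTauN)^m(t,\cdot)\|_{L^1}$, gives on each ball $B_R$ a uniform estimate $\int_0^T\|\interpEpsTauN(t,\cdot+h)-\interpEpsTauN(t,\cdot)\|_{L^1(B_R)}\,dt\le C_R\,|h|^{1/m}$; with the tightness of $\interpEpsTauN$ this shows $\sup_{\veps,\tau}\|\interpEpsTauN-\interpEpsTauN\ast\eta_\delta\|_{L^1((0,T)\times\R^n)}\to 0$ as $\delta\to 0^+$ for a standard mollifier $\eta_\delta$. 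For fixed $\delta$, the functions $\interpEpsTauN\ast\eta_\delta$ are equi-bounded and equi-Lipschitz in space, and by Kantorovich--Rubinstein duality \thref{thm:Holderest} gives $\|(\interpEpsTauN\ast\eta_\delta)(t+s,\cdot)-(\interpEpsTauN\ast\eta_\delta)(t,\cdot)\|_\infty\le C_\delta\,W(\interpEpsTauN(t+s,\cdot),\interpEpsTauN(t,\cdot))\le C_\delta\,(s+\tau)^{1/2}$; with the tightness this yields a uniform time-modulus of continuity in $L^1((0,T)\times\R^n)$, so by the Fr\'echet--Kolmogorov criterion $\{\interpEpsTauN\ast\eta_\delta\}_{\veps,\tau}$ is relatively compact in $L^1((0,T)\times\R^n)$ for every $\delta$. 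A diagonal extraction over $\delta=1/j$ together with the mollification estimate then shows that, along a suitable subsequence, $\interpEpsTauN$ is Cauchy, hence converges, in $L^1((0,T)\times\R^n)$ to some $\interp$; passing to a further subsequence we may assume $\interpEpsTauN\to\interp$ a.e. Since $\interpEpsTauN\ge 0$, $\int_{\R^n}\interpEpsTauN(t,\cdot)=1$ and the family is tight, $\interp(t)\in\probac(\R^n)$ for a.e.\ $t$, and passing to the limit in \thref{thm:Holderest} shows that $\interp$ has a $1/2$-H\"older continuous representative as a curve in $\prob(\R^n)$. Finally, a.e.\ convergence together with the equi-integrability and tightness of $\{(\interpEpsTauN)^m\}$ and Vitali's convergence theorem upgrade this to $\interpEpsTauN\to\interp$ strongly in $L^m((0,T)\times\R^n)$; and since $0\le p(\interpEpsTauN)\le C\,(\interpEpsTauN)^m$ by \eqref{growth1}, $p$ is continuous, and $p(\interpEpsTauN)\to p(\interp)$ a.e., the same equi-integrability and tightness give $p(\interpEpsTauN)\to p(\interp)$ strongly in $L^1((0,T)\times\R^n)$.

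The main obstacle is the compactness step: the uniform spatial Sobolev-type regularity is available only for the nonlinear quantity $(\interpEpsTauN)^m$ and not for $\interpEpsTauN$ itself, the temporal control is only in the weak Wasserstein metric, and the spatial domain is unbounded, so the three ingredients — the power-transfer inequality, Kantorovich--Rubinstein duality applied to the mollifications, and the moment/tightness estimates — must be carefully interwoven; moreover, obtaining strong convergence in $L^m$ rather than merely in $L^q$ for $q<m$ relies precisely on the interpolation bound established in the second step.
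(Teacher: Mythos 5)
Your proof is correct, but it follows a genuinely different route from the paper's. The paper invokes the abstract generalized Aubin--Lions lemma of Rossi--Savar\'e (\cite[Theorem~2]{Rossi03}): it packages the uniform $L^1_tW^{1,1}_x$ bound on $(\interpEpsTauN)^m$, the moment bound, and the $L^m$ bound into a spatial ``energy'' functional $G$ with $L^m$-compact sub-level sets (\thref{thm:equic}), checks the integrated $W_2$ time-modulus \eqref{compactentemps} from \thref{thm:Holderest}, and then reads off strong $L^m$ compactness from the theorem. You instead reprove the relevant compactness by hand: you transfer the spatial modulus from $(\interpEpsTauN)^m$ to $\interpEpsTauN$ via the superadditivity inequality $(a-b)^m\le a^m-b^m$, mollify in space, use Kantorovich--Rubinstein duality on the mollified functions to exploit the Wasserstein time-H\"older bound, run Fr\'echet--Kolmogorov plus a diagonal extraction, and then upgrade the resulting $L^1$ convergence to $L^m$ via an interpolation bound of $(\interpEpsTauN)^m$ in $L^{(n+1)/n}_{t,x}$ (for equi-integrability), tightness from moments, a.e.\ convergence along a further subsequence, and Vitali. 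This is more elementary and self-contained (no appeal to the Rossi--Savar\'e machinery) but requires more care, notably in interweaving the moment-based tightness with the mollification error over the unbounded domain and in noting that $|a-b|^m\le|a^m-b^m|$ to pass from $L^1$ convergence of the powers to $L^m$ convergence of the densities. Both approaches reach the same a priori inputs (\thref{thm:TVBound}, \thref{thm:Holderest}, \thref{thm:MultiStepMoments}, \thref{thm:FUBoundUniform}) and deliver the same conclusion, including the $L^1$ convergence of $p(\interpEpsTauN)$ via the growth bound \eqref{growth1}; your pressure step (a.e.\ convergence plus uniform integrability of the dominating $C\,(\interpEpsTauN)^m$ and Vitali) is a slight variant of the paper's remark that $p$ is continuous from $L^m$ to $L^1$. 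One small presentational caveat: your Fr\'echet--Kolmogorov step gives $(s+\tau)^{1/2}$ as time modulus, which is uniform over $h\to 0$ only because you ultimately work with a sequence $\tau_k\to 0$ (split into finitely many small $k$ and the tail); it is worth making that explicit, although the paper's formulation \eqref{compactentemps} has the same implicit caveat.
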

\begin{proof}
	Strong convergence heavily relies on a generalization of the Aubin Lions Lemma due to  Savar\'e and Rossi (see Theorem 2 in \cite{Rossi03}), the BV estimate above and then arguments similar to those developed by Di Francesco and Matthes \cite{Matthes14} and Laborde \cite{Laborde15}. For a fixed constant $C \in (0,\infty)$ let us denote by
	\begin{align}
		\mc{U} \eqdef \left\{\interpEpsTauN : \veps>0, \tau>0, N \in \N \tn{ s.t. } (\veps,\tau) \tn{ satisfy \eqref{eq:OneStepEpsBound} with constant } C, N \cdot \tau = T\right\}
	\end{align}
	a family of interpolated flows. First, as proved in \cite{Matthes14}, \thref{thm:Holderest}  implies that we have 
	\begin{equation}\label{compactentemps}
	\lim_{h \to 0} \sup_{\rho \in \mc{U}} \int_0^{T-h} W_2(\rho(t+h, .), \rho(t,.))dt=0.
	\end{equation}
Now the estimates of the previous sections also give
\begin{equation}\label{compactenx}
\sup_{\rho\in \mc{U}}  \int_0^T G(\rho(t,.))\, dt\le C<+\infty
\end{equation}	
where
\begin{align}
	G(\rho) \eqdef
	\begin{cases} 
		M(\rho)+ \int \rho^m\,dx + \int \vert D \rho^m\vert \mbox{ if } \rho\in \probac(\R^n), \\ 
		+\infty \mbox{ otherwise}
	\end{cases}
\end{align}
and $\int \vert D \rho^m\vert $ refers to the total variation of $\rho^m$, defined by 
\begin{equation}
  \int \vert D \rho^m\vert \eqdef \sup \left\{\int_{\R^n}\rho^m \ddiv \varphi\,dx : \varphi\in  C_c^\infty(\R^n,\R^n), \norm{\varphi}_\infty\leq 1 \right\}\in [0,+\infty].
\end{equation}
In the case where $\rho^m\in W^{1,1}(\R^n)$ (for instance, by \thref{thm:TVBound}, if $\rho = \interpEpsTauN(t,\cdot)$ for some $t\in[0,T)$), then  $\int \vert D \rho^m\vert=  \int \vert \nabla\rho^m(x)\vert\,dx$. We refer the reader to~\cite{AFP00} for more details about functions with bounded variation.

Thanks to \thref{thm:equic} below, sub-level sets of $G$ are relatively compact in $L^m (\R^n)$ so the Savar\'e-Rossi theorem gives the desired $L^m$ compactness. 
		
	The statement on the convergence of the pressure then follows from the assumption that $p(s)\le C \cdot s^m$, \eqref{growth1}, which then implies that $\mu\mapsto p(\mu)$ is continuous from $L^m$ to $L^1$.
	%

\end{proof}

\begin{lemma}\thlabel{thm:equic}
$G$ is lower semi-continuous in $L^m$ and its sub-level sets are compact.
\end{lemma}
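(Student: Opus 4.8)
The plan is to prove lower semicontinuity term by term, and then to combine the three a~priori bounds encoded in $G$ — control of the second moment $M(\rho)$, of $\|\rho\|_{L^m}$, and of the total variation $|D\rho^m|(\R^n)$ — to get compactness of the sub-level sets in $L^m(\R^n)$. The only genuinely delicate point will be the last step: BV-compactness alone only yields $L^1_{\mathrm{loc}}$ convergence, so one must rule out \emph{both} escape of mass to infinity and concentration in order to upgrade it to strong $L^m(\R^n)$ convergence, and this is exactly where all three bounds have to be used together (the BV bound, through the Sobolev embedding for $BV$, prevents tall thin spikes, while the second moment bound prevents mass from drifting off to infinity).

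\emph{Lower semicontinuity.} Let $\rho_k\to\rho$ in $L^m(\R^n)$; we may assume $\lim_k G(\rho_k)=\liminf_k G(\rho_k)<\infty$ and, up to a subsequence, $\rho_k\to\rho$ a.e. In particular $\rho_k\in\probac(\R^n)$ and $\sup_k M(\rho_k)<\infty$; the latter makes $\{\rho_k\}_k$ tight, and together with the a.e.\ convergence and the (local) $L^1$-convergence inherited from $L^m$-convergence this forces $\rho\ge 0$, $\int_{\R^n}\rho=1$, while Fatou's lemma gives $M(\rho)\le\liminf_k M(\rho_k)$; hence $\rho\in\probacAll(\R^n)$. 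Next, $\int\rho^m=\|\rho\|_{L^m}^m$ is continuous along the sequence. Finally, from $|a^m-b^m|\le m\,|a-b|\,\max(a,b)^{m-1}$ for $a,b\ge0$ and Hölder's inequality, $\|\rho_k^m-\rho^m\|_{L^1}\le m\,\|\rho_k-\rho\|_{L^m}\,(\|\rho_k\|_{L^m}+\|\rho\|_{L^m})^{m-1}\to0$, so $\rho_k^m\to\rho^m$ in $L^1(\R^n)$ and a~fortiori in $L^1_{\mathrm{loc}}$, and lower semicontinuity of the total variation gives $|D\rho^m|(\R^n)\le\liminf_k|D\rho_k^m|(\R^n)$. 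Adding the three estimates yields $\rho\in\probac(\R^n)$ and $G(\rho)\le\liminf_k G(\rho_k)$.

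\emph{Compactness: the a.e.\ limit and the auxiliary bounds.} Fix $C>0$ and let $(\rho_k)_k$ satisfy $G(\rho_k)\le C$. Then $g_k\eqdef\rho_k^m$ is bounded in $BV(\R^n)$, since $\|g_k\|_{L^1}=\int\rho_k^m\le C$ and $|Dg_k|(\R^n)\le C$. By the compact embedding $BV(\R^n)\hookrightarrow L^1_{\mathrm{loc}}(\R^n)$ (see~\cite{AFP00}), after extracting a subsequence $g_k\to g$ in $L^1_{\mathrm{loc}}(\R^n)$ and a.e., with $g\ge0$; set $\rho\eqdef g^{1/m}$, so $\rho_k\to\rho$ a.e. Moreover the Sobolev inequality for $BV$ bounds $g_k$ in $L^{n/(n-1)}(\R^n)$ (with the convention $L^\infty$ when $n=1$, where the argument is easier), that is, $(\rho_k)_k$ is bounded in $L^q(\R^n)$ with $q\eqdef\tfrac{mn}{n-1}>m$.

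\emph{Upgrading to $L^m(\R^n)$, and conclusion.} It remains to show $\rho_k\to\rho$ strongly in $L^m(\R^n)$; by the a.e.\ convergence already obtained and the Vitali convergence theorem it suffices to check that $\{\rho_k^m\}_k$ is uniformly integrable and uniformly tight at infinity. Chebyshev's inequality gives $|\{\rho_k>\lambda\}|\le\lambda^{-m}\|\rho_k\|_{L^m}^m\le C\lambda^{-m}$, so by Hölder with the uniform $L^q$ bound $\int_{\{\rho_k>\lambda\}}\rho_k^m\le\|\rho_k\|_{L^q}^m\,|\{\rho_k>\lambda\}|^{1/n}\le C\lambda^{-m/n}$; on $\{\rho_k\le\lambda\}$ one has $\rho_k^m\le\lambda^m$, and $\int_{\{|x|>R\}}\rho_k\le CR^{-2}$ by the second-moment bound. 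Splitting each relevant domain into $\{\rho_k\le\lambda\}$ and $\{\rho_k>\lambda\}$, we obtain for every Borel $E$ and every $\lambda,R>0$
\[
\int_{E}\rho_k^m\,dx\le\lambda^m|E|+C\lambda^{-m/n},\qquad \int_{\{|x|>R\}}\rho_k^m\,dx\le\lambda^{m-1}CR^{-2}+C\lambda^{-m/n},
\]
and choosing first $\lambda$ large and then $|E|$ small (resp.\ $R$ large) makes the right-hand sides arbitrarily small uniformly in $k$. Hence $\rho_k^m\to\rho^m$ in $L^1(\R^n)$, i.e.\ $\rho_k\to\rho$ in $L^m(\R^n)$, and by the lower semicontinuity proved above $G(\rho)\le C$. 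Thus every sequence in $\{G\le C\}$ has a subsequence converging in $L^m(\R^n)$ to an element of $\{G\le C\}$, which is therefore compact.
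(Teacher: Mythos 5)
Your proof is correct and follows essentially the same route as the paper's: both rely on the compact embedding $BV(\R^n)\hookrightarrow L^1_{\mathrm{loc}}(\R^n)$, the Sobolev embedding of $BV$ into $L^{n/(n-1)}$ to control concentration, and the second-moment bound to control escape of mass to infinity. The only cosmetic difference is that you assemble these via Vitali's theorem (separate uniform-integrability and tightness estimates on $\rho_k^m$), whereas the paper works with $\eta=\rho^m$ and derives the tail bound $\int_{|x|>R}\eta\le C R^{-\beta}$ in one shot by H\"older's inequality; you also spell out the lower-semicontinuity part, which the paper simply declares standard.
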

\begin{proof}
The  proof is similar to arguments from \cite{Matthes14} and \cite{Laborde15}, we provide it here for the sake of completeness. The fact that $G$ is l.s.c.\ for the $L^m$ topology is standard.  

Now, let us prove the relative compactness of the sub-level sets. For some $C \in (0,\infty)$ let
\begin{align}
  A \eqdef \left\{ \eta \in \BV(\R^n) \cap L^{1/m}(\R^n) : \eta \geq 0,
		\int_{\R^n} (|x|^2\,\eta^{1/m} + \eta + |D \eta|)\, dx \leq C \right\}.
\end{align}
Since the map $\eta \mapsto \rho \eqdef \eta^{1/m}$ is continuous from $L^1$ to $L^m$ it is sufficient to show that $A$ is relatively compact in $L^1$.
The set $A$ being bounded in $\BV(\R^n)$, it is compact in $L^1_{loc}(\R^n)$~\cite[Th.~3.23]{AFP00}, that is, every sequence in $A$ has a subsequence which converges for all compact set $K\subset \R^n$ in $L^1(K)$. To prove relative compactness in $L^1(\R^n)$ (and not only $L^1_{loc}(\R^n)$), we have to control the mass on complements of balls, i.e.\ we have to prove that
\begin{equation*}
	\lim_{R\to \infty} \sup_{\eta\in A} \int_{\R^n : |x| >R} \eta\,dx =0\,.
\end{equation*}
If $n=1$, $\norm{\eta}_\infty \leq C(\int \vert \eta\vert + \int \vert D\eta\vert)$, so that we may write
\begin{equation}
  \int_{\R^n : |x| >R} \eta\, dx \leq \frac{1}{R^2}\int |x|^2 \eta^{1/m}\, dx \norm{\eta}_\infty^{1-1/m}\leq \frac{C}{R^2}.
\end{equation}
If $m>1$,  let us define
\begin{align*}
	p \eqdef n-\frac{n-1}{m}\geq 1, \qquad
	\alpha \eqdef \frac{1}{m\,p}, \qquad
	\beta \eqdef \frac{2}{p}\,.
\end{align*}
By H\"older's and a Sobolev inequality, we then have for all $\eta\in A$:
\begin{align*}
	& \int_{\R^n : |x| >R} \eta\, dx
		\leq \frac{1}{R^{\beta}} \int_{\R^n} |x|^{\beta} \, \eta^{\alpha} \, \eta^{1-\alpha} \, dx \\
	\leq {} & \frac{1}{R^{\beta}} \left(
		\int_{\R^n} |x|^{\beta\,p}\,\eta^{\alpha\,p}\,dx
		\right)^{1/p}
		\left( \int_{\R^n} \eta^{(1-\alpha)\, p/(p-1)}\,dx
			\right)^{(p-1)/p} \\
	= {} & \frac{1}{R^{\beta}} \left( \int_{\R^n} |x|^2\, \eta^{1/m}\,dx \right)^{1/p}
		\left( \int_{\R^n} \eta^{n/(n-1)}\,dx \right)^{(p-1)/p}
		\leq \frac{C}{R^{\beta}}
\end{align*}
which gives the desired result. 
As a result, the sub-level sets of $G$ are relatively compact, and by the lower semi-continuity property they are also closed, hence they are compact.
\end{proof}

\subsection{Convergence to the Nonlinear Diffusion PDE}
\label{sec:JKOConvergencePDE}
Now we are ready to prove our convergence result. Let $(\veps_k)_k$, $(\tau_k)_k$ be two sequences of positive regularization and time-step parameters such that 
\begin{equation}\label{convergencecontrolee}
\lim_{k \rightarrow \infty} \veps_k = \lim_{k \rightarrow \infty} \tau_k=0, \qquad
 -\veps_k \log(\veps_k)\le C \tau_k^2
\end{equation}
for a fixed constant $C \in (0,\infty)$ (c.f.~\eqref{eq:OneStepEpsBound}).
We also fix a time $T \in (0,\infty)$ and set the number of time-steps $N_k$ in such a way that 
\begin{equation}
\lim_{k \rightarrow \infty} N_k \tau_k =T.
\end{equation}
For notational convenience let us then set $\interp_k \eqdef \interp^{(\veps_k, \tau_k,N_k)}$ and let $\interp \in L^m((0,T)\times \R^n)$ be such that $\interp_k$ converges strongly in $L^m$ to $\interp$ and $p(\interp_k)$ converges to $p(\interp)$ strongly in $L^1$.
By virtue of \thref{thm:Holderest} and the refined version of Arzela-Ascoli Theorem in \cite[Proposition 3.3.1]{AmbrosioGradientFlows2005}, we may also assume that $\interp_k(t,.)$ converges narrowly to $\interp(t,.)$ uniformly in $t\in [0,T]$.


\begin{theorem}
\thlabel{thm:JKOConvergence}
In the above setup (Assumption \ref{asp:FreeEnergy} and \eqref{convergencecontrolee}),  the limit curve of measures $\interp$ solves the evolution equation
\begin{equation}\label{evolnonlinear}
\partial_t \interp=\Delta p(\interp)+\ddiv(\interp \nabla v), \; \interp|_{t=0}=\rho_0.
\end{equation} 
\end{theorem}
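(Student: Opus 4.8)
The strategy is to pass to the limit in the discrete Euler--Lagrange equation \eqref{eq:EulerLagrangeEulerLagrange} satisfied at each step $k$, using the compactness already established. First I would fix a test function $\varphi \in C_c^\infty([0,T)\times\R^n)$ and, for each $k$, plug $w(y) = \nabla\varphi(t,y)$ (at time $t$ in the $k$-th interval) into \eqref{eq:EulerLagrangeEulerLagrange}, then multiply by $\tau_k$ and sum over the time-steps. The term $\frac{1}{\tau}\int \langle \nabla\varphi(y), x-y\rangle\,d\iteratePar{\gamma}{k}(x,y)$ must be reorganized so that the limit is recognized: writing $x-y$ as the transport displacement and using a second-order Taylor expansion of $\varphi$ together with \thref{thm:GammaAllSteps} (which controls $\sum_k (c,\iteratePar{\gamma}{k}) \le \tau C$, hence $\sum_k (c,\iteratePar{\gamma}{k})^{1/2}$ is $O(\sqrt{N\tau}) = O(1)$ via Cauchy--Schwarz, while the quadratic remainder is $O(\sum_k (c,\iteratePar{\gamma}{k})) = O(\tau) \to 0$), one sees that this term converges to $-\int_0^T\int_{\R^n} \interp\,\partial_t\varphi$ after a discrete integration by parts in time — exactly as in \cite{JKO1998}. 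Here I would also use the marginal condition $\iteratePar{\gamma}{k} \in \PiAc(\iteratePar{\rho}{k-1},\iteratePar{\rho}{k})$ and the uniform narrow convergence $\interp_k(t,\cdot) \to \interp(t,\cdot)$ to identify the boundary term with $\int_{\R^n}\varphi(0,\cdot)\rho_0$.

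Next I would handle the three remaining terms. The potential term $\int \langle \nabla v(y), \nabla\varphi(t,y)\rangle\,\iteratePar{\rho}{k}(y)\,dy$ passes to the limit $\int_0^T\int \langle \nabla v, \nabla\varphi\rangle\,\interp$ by strong $L^m$ (hence $L^1_{loc}$, hence narrow) convergence of $\interp_k$, since $\nabla v$ is bounded and $\nabla\varphi$ compactly supported; integrating by parts in space turns this into $-\int_0^T\int \varphi\,\ddiv(\interp\nabla v)$ in the distributional sense. The internal-energy term $\int p(\iteratePar{\rho}{k}(y))\,\Delta\varphi(t,y)\,dy$ converges to $\int_0^T\int p(\interp)\,\Delta\varphi$ by the strong $L^1$ convergence of the pressures $p(\interp_k) \to p(\interp)$; this is $\int_0^T\int \varphi\,\Delta p(\interp)$ distributionally. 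Finally, the extra entropic term $\frac{\veps_k}{2\tau_k}\int \iteratePar{\rho}{k}(y)\,(\Delta\varphi)(t,y)\,dy$ summed over $k$ is bounded in absolute value by $\frac{\veps_k}{2\tau_k}\|\Delta\varphi\|_\infty \sum_k \tau_k = \frac{\veps_k}{2}\,\|\Delta\varphi\|_\infty\, N_k \le \frac{\veps_k}{2}\|\Delta\varphi\|_\infty\,(T/\tau_k + o(1))$, which tends to zero precisely because $\veps_k = o(\tau_k)$ — this is the point at which the scaling hypothesis \eqref{convergencecontrolee} (which implies $\veps_k/\tau_k \to 0$) is consumed. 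Collecting the four limits yields
\begin{equation*}
	-\int_0^T\!\!\int_{\R^n} \interp\,\partial_t\varphi\,dy\,dt - \int_{\R^n}\varphi(0,\cdot)\,\rho_0 = \int_0^T\!\!\int_{\R^n} \big( p(\interp)\,\Delta\varphi + \interp\,\langle\nabla v,\nabla\varphi\rangle \big)\,dy\,dt,
\end{equation*}
which is exactly the distributional (weak) formulation of \eqref{evolnonlinear} with initial datum $\rho_0$.

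The main obstacle is the first term, the displacement/time-derivative term: one must carefully justify the second-order Taylor expansion of $\varphi$ along the coupling $\iteratePar{\gamma}{k}$ and show the remainder is negligible uniformly, using only the $\ell^1$-in-time bound $\sum_k (c,\iteratePar{\gamma}{k}) \le C\tau_k$ from \thref{thm:GammaAllSteps} and the Hölder-in-time estimate \thref{thm:Holderest}; the subtlety is that individual steps $(c,\iteratePar{\gamma}{k})$ need not be $o(\tau_k^2)$, so the quadratic remainder is controlled only in aggregate. A secondary technical point is ensuring that the $\veps_k$-term genuinely vanishes, which forces the hypothesis $\veps_k \log(\veps_k) = O(\tau_k^2)$ to be used (via $\veps_k = o(\tau_k)$) rather than merely $\veps_k \to 0$; this also explains the remark after \eqref{eq-jko-step-regul} that $\veps = o(\tau)$ is necessary. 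Once these estimates are in place, the identification of the limit PDE is routine, and uniqueness of the limit (if desired) would follow from standard results on nonlinear diffusion equations with the given growth assumptions on $p$, though the statement as given only asserts that $\interp$ solves \eqref{evolnonlinear}.
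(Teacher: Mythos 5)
Your proposal is correct and essentially reproduces the paper's proof, just read in the reverse order: the paper starts from $\int_0^T\!\int(\partial_t\phi)\,d\interp_k$, telescopes in time, rewrites each difference via the coupling and Taylor expansion (remainder controlled by \thref{thm:GammaAllSteps}), and then substitutes the Euler--Lagrange identity, whereas you start from the Euler--Lagrange identity, sum in time, and recover $-\int_0^T\!\int\interp\,\partial_t\varphi$ by discrete integration by parts. The key ingredients — Taylor expansion with remainder $O(\sum_l(c,\gamma^{(l)})) = O(\tau_k)$, the limit $\veps_k/\tau_k \to 0$ killing the extra diffusion term, strong $L^1$ convergence of $p(\interp_k)$ and strong $L^m$ convergence of $\interp_k$ for the pressure and potential terms, and narrow convergence for the initial datum — are identical.
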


\begin{proof}
Let $\phi \in C_c^{\infty}([0,T)\times \R^n)$ and $k$ be large enough so that $\phi(t,.)=0$ for $t\in[(N_k-1)\tau_k, N_k \tau_k]$. We then have 
\begin{align*}
\int_0^T \int_{\R^n} (\partial_t \phi)\, d\interp_k &
	= \sum_{l=0}^{N_k-1} \int_{\R^n} [
		\phi((l+1)\,\tau_k,\cdot)-\phi(l\,\tau_k,\cdot)
		]\, d\rho^{(\veps_k, \tau_k, l)} \\
	& = -\int_{\R^n} \phi(0,\cdot) \rho_0
		+ \sum_{l=1}^{N_k-1} \int_{\R^n} \phi(l \tau_k, \cdot)\,
			(\rho^{(\veps_k, \tau_k, l-1)}-\rho^{(\veps_k, \tau_k, l)})\,dx\,.
\end{align*}
Using the optimal plan $\gamma^{(\veps_k, \tau_k, l)}$ for $W^2_{\veps_k}$ between $\rho^{(\veps_k, \tau_k, l-1)}$ and $\rho^{(\veps_k, \tau_k, l)}$ we can rewrite:
\begin{equation*}
	\int_{\R^n} \phi(l \tau_k, \cdot) (\rho^{(\veps_k, \tau_k, l-1)}-\rho^{(\veps_k, \tau_k, l)})\,dx = 
	\int_{\R^n\times \R^n} [\phi(l\, \tau_k, x)-  \phi(l\,\tau_k, y)] \, d\gamma^{(\veps_k, \tau_k, l)}(x,y)\,.
\end{equation*}
Taylor expanding the integrand as
\begin{equation*}
	\phi(l\, \tau_k, x)-  \phi(l\, \tau_k, y)
		= \nabla \phi(l\, \tau_k, y)\cdot (x-y)+R_{k,l}(\phi, x,y) \tn{ with } |R_{k,l}(\phi, x,y)| \le \frac{1}{2} \Vert \nabla^2\phi\Vert_{\infty} |x-y|^2
\end{equation*}
and using \thref{thm:GammaAllSteps}, we get that
\begin{align*}
	R_k(\phi) & \eqdef \sum_{l=1}^{N_k-1}
		\int_{\R^n\times \R^n} R_{k,l}(\phi, x,y)\,d\gamma^{(\veps_k, \tau_k, l)}(x,y) \\
	\intertext{satisfies}
 	|R_k(\phi)| & \leq C\, \tau_k \| D^2 \phi \|_{\infty,\infty} \to 0 \tn{ as } k \to \infty.
\end{align*}
We then have
\begin{equation}\label{cvgpde1}
	\int_0^T \int_{\R^n} (\partial_t \phi)\, d\interp_k =
		-\int_{\R^n} \phi(0,\cdot)\,d\rho_0 + \sum_{l=1}^{N_k-1} \int_{\R^n\times \R^n}
			\la \nabla  \phi(l \tau_k, y),x-y\ra \, d\gamma^{(\veps_k, \tau_k, l)}(x,y)+R_k(\phi).
\end{equation}
We then use the Euler-Lagrange equation as derived in \eqref{eq:EulerLagrangeEulerLagrange} with $\nabla \phi(l \tau_k,\cdot)$ as test-function, to rewrite
\begin{equation}\label{cvgpde2}
	\int_0^T \int_{\R^n} \partial_t \phi \interp_k=-\int_{\R^n} \phi(0,\cdot) \rho_0
		+ A_k + B_k + C_k + R_k(\phi)\,,
\end{equation}
where
\begin{equation}\label{cvgpde3}
	A_k \eqdef -\frac{\veps_k}{2} \sum_{l=1}^{N_k-1} \int_{\R^n}
		\Delta \phi(l\, \tau_k,\cdot)\,d\rho^{(\veps_k, \tau_k, l)},\quad
	B_k \eqdef -\tau_k \sum_{l=1}^{N_k-1} \int_{\R^n}
		\Delta \phi(l\, \tau_k,\cdot)\, p(\rho^{(\veps_k, \tau_k, l)})\,dx
\end{equation}
and 
\begin{equation}\label{cvgpde4}
	C_k \eqdef \tau_k \sum_{l=1}^{N_k-1} \int_{\R^n}
		\la \nabla \phi(l\, \tau_k,\cdot), \nabla v \ra\, d\rho^{(\veps_k, \tau_k, l)}\,.
\end{equation}
Thanks to the smoothness of $\phi$ and the fact that $\nabla v$ is bounded we have
\begin{align}
	B_k & = -\int_{\tau_k}^T \int_{\R^n} \Delta \phi\,p(\interp_k)\,dx
		+ \mc{O}(\tau_k \,\|\partial_t \Delta \phi \|_{\infty,\infty})\,, \\
	C_k & = \int_{\tau_k}^T \int_{\R^n}  \la \nabla  \phi, \nabla v \ra  d\interp_k
		+ \mc{O}(\tau_k \, \|\partial_t \nabla \phi \|_{\infty,\infty})\,.
\end{align}
As for $A_k$, it converges to $0$ since
\begin{equation}
	\tau_k  \sum_{l=1}^{N_k-1} \int_{\R^n} \Delta \phi(l \tau_k,\cdot)\, d\rho^{(\veps_k, \tau_k, l)} =
		\int_0^T \int_{\R^n} \Delta \phi\, \interp_k
			+ \mc{O}(\tau_k \|\partial_t \Delta \phi \|_{\infty,\infty})
\end{equation}
and $\veps_k / \tau_k$ converges to $0$ as $k \to \infty$.  
Putting everything together, we thus get 
\begin{equation}\label{cvgpde6}
	\int_0^T \int_{\R^n} (\partial_t \phi) \interp_k = 
		- \int_{\R^n} \phi(0,\cdot) d\rho_0 - \int_0^T \int_{\R^n} p(\interp_k)\, \Delta \phi\,dx
		+ \int_0^T \int_{\R^n}  \la \nabla  \phi, \nabla v \ra d\interp_k +S_k(\phi),
\end{equation} 
 where $S_k(\phi)\to 0$ as $k\to \infty$. Letting $k\to \infty$ and using the strong convergence of $\interp_k$ to $\interp$ in $L^m$ and the strong convergence of $p(\interp_k)$ to $p(\interp)$ in $L^1$, we thus get
\begin{equation}\label{cvgpde7}
	\int_0^T \int_{\R^n} (\partial_t \phi)\,d\interp =
		- \int_{\R^n} \phi(0,\cdot) d\rho_0 - \int_0^T \int_{\R^n} p(\interp)\, \Delta \phi\,dx
		+ \int_0^T \int_{\R^n} \la \nabla  \phi, \nabla v \ra d\interp,
\end{equation}
which is precisely the weak form of \eqref{evolnonlinear}.
\end{proof}

\correc{We have proven that, under the condition \eqref{convergencecontrolee}, every cluster point obtained by the regularized JKO-scheme is a solution of \eqref{evolnonlinear}. If there is uniqueness for \eqref{evolnonlinear} (which is the case  for instance if the energy $F$ is $\lambda$-displacement convex for some $\lambda\in \R$, see \cite{AmbrosioGradientFlows2005}), then by our compactness estimates, there is whole convergence of $\interp_k$ (not only up to a subsequence).}


\section{Numerical Illustrations}

\newcommand{\DiscFont}[1]{\mathbf{#1}}
\renewcommand{\DiscFont}[1]{{\rm #1}}

\newcommand{\bmu}{\DiscFont{m}} 
\newcommand{\brho}{\DiscFont{r}} 
\newcommand{\bgamma}{\DiscFont{G}} 
\newcommand{\bK}{\DiscFont{K}} 
\newcommand{\bA}{\DiscFont{a}} 
\newcommand{\bB}{\DiscFont{b}} 
\newcommand{\bU}{\DiscFont{u}} 
\newcommand{\bV}{\DiscFont{v}} 
\newcommand{\Ndisc}{p} 
\newcommand{\LambertW}{\tn{LambertW}}
\newcommand{\rhoSol}{\rho_{\tn{sol}}}
\newcommand{\bleb}{\DiscFont{L}}
\newcommand{\blebvol}{l}

This section showcases some 1-D and 2-D numerical simulations that illustrate the behavior of the entropic regularization of gradient flows, and in particular the limit of small $\veps$. For simulations related to the entropic regularization~\eqref{eq:W2} of the optimal transport problem, as studied in Section~\ref{sec:OT}, we refer to~\cite{BregmanProj2015}. 
We recall here the numerical scheme initially proposed in~\cite{Peyre-JKO} and later refined in~\cite{2016-chizat-sinkhorn}.

\subsection{Discretization}
We thus aim at numerically computing an approximation of each regularized JKO step~\eqref{eq-jko-step-regul}. 
For that purpose, we suppose that we are given a discrete grid $(x_i)_{i=1}^\Ndisc \subset \R^n$ to approximate the underlying continuous domain. We restrict our attention to discrete measures supported on this grid, which have the form 
\begin{equation*}
	\rho = \sum_{i=1}^\Ndisc \brho_i \delta_{x_i}
	\in \prob(\R^n), 
\end{equation*}
where $\brho = (\brho_i)_{i=1}^\Ndisc$ is a vector in the probability simplex $\Sigma_\Ndisc$
\eq{
	\Sigma_\Ndisc \eqdef \enscond{\brho \in \R_+^\Ndisc}{\sum_{i=1}^\Ndisc \brho_i=1}, 
} 
and $\delta_x$ is the Dirac measure located at $x \in \R^n$.
We assume that every grid point represents a volume element of uniform size $\blebvol$, such that the discrete approximation of the Lebesgue measure is given by $\bleb=\blebvol \sum_{i=1}^{\Ndisc} \delta_{x_i}$. The discrete approximation of the Lebesgue density of $\rho$ is then given by $(\brho_i/\blebvol)_i$.

As defined for instance in~\cite{BregmanProj2015}, the regularized optimal transport cost between two vectors $\bmu,\brho \in \Sigma_\Ndisc$ in the simplex reads
\eq{
	\ol{W}_\veps^2(\bmu,\brho) \eqdef \min_{\bgamma \in \R_+^{\Ndisc \times \Ndisc}} \enscond{
		\sum_{i,j=1}^{\Ndisc} c_{i,j} \bgamma_{i,j}
			+ \veps \, \bgamma_{i,j} \, \log(\bgamma_{i,j}/\blebvol^2)
		}{
			\bgamma \ones_\Ndisc = \bmu, 
			\bgamma^\top \ones_\Ndisc = \brho
		}, 
}
where $c_{i,j} \eqdef \norm{x_i-x_j}^2$.
Here we have used the shorthand notation $\bgamma \ones_\Ndisc = (\sum_j \bgamma_{i,j})_{i}$, where $\ones_\Ndisc=(1,\ldots,1)^\top \in \R^\Ndisc$, and $\bgamma^\top$ denotes the transpose of $\bgamma$. 
This corresponds to a discrete approximation of the definition~\eqref{eq:W2Entropy} of $W_\veps^2$, where the entropy $H_2$ with respect to the Lebesgue measure is replaced by the entropy relative to the discretized Lebesgue measure on the product grid (using the usual convention $0\log(0)=0$).
%

The infinite dimensional optimization~\eqref{eq-jko-step-regul} is then replaced by the following finite dimensional convex program
\begin{equation}\label{eq-jko-step-regul-discr}
	\iteratePar{\brho}{k+1} = \argmin_{\brho \in \Sigma_\Ndisc} \ol{J}_{\veps,\tau}(\iteratePar{\brho}{k},\brho)
	\qquad \text{for} \qquad k=0,\ldots,N-2, 
\end{equation}
where, given an arbitrary fixed probability vector $\bmu \in \Sigma_\Ndisc$, the discretized functional reads
\eq{
	\ol{J}_{\veps,\tau}(\bmu,\brho) \eqdef \frac{1}{2\tau} \ol{W}_\veps^2(\bmu,\brho) + \ol{F}(\brho)
	\quad\text{where}\quad
	\ol{F}(\brho) \eqdef \sum_{i} v(x_i)\,\brho_i + \ol{u}(\brho_i)
	\quad\tn{and}\quad \ol{u}(s) = u(s/\blebvol) \cdot \blebvol.
}
Note that $u$ acts on the Lebesgue density of $\brho$ and is integrated w.r.t.~the discrete Lebesgue measure.
\subsection{Generalized Sinkhorn Algorithm}

The solution of the problem~\eqref{eq-jko-step-regul-discr} can be conveniently written as $\iteratePar{\brho}{k+1} = \bgamma \ones_\Ndisc$ where $\bgamma$ is the unique solution of 
\eql{\label{eq-kl-form}
	\min_{\bgamma \in \R_+^{N \times N}} \KL( \bgamma|\bK ) + f(\bgamma\ones_\Ndisc) + g(\bgamma^\top\ones_\Ndisc), 
}
where we introduced the Gibbs kernel $\bK_{i,j}=\exp(-c_{i,j}/\veps) \cdot \blebvol^2$, 
where the Kullback-Leibler divergence is
\eq{
	\KL( \bgamma|\bK ) \eqdef \sum_{i,j} \bgamma_{i,j} \log\left( \frac{\bgamma_{i,j}}{\bK_{i,j}} \right) - \bgamma_{i,j} + \bK_{i,j}
}
and the two functionals $f$ and $g$ are
\eql{\label{eq-func-def}
	f(\brho) \eqdef \choice{
		0 \quad\text{if}\quad \brho = \iteratePar{\brho}{k}, \\
		+\infty \quad\text{otherwise}, 
	}
	\quad
	g(\brho) \eqdef \kappa \cdot \ol{F}(\brho)
	\quad\text{with}\quad
	\kappa \eqdef \frac{2\tau}{\veps}.
}
As detailed in~\cite{2016-chizat-sinkhorn}, problems of the form~\eqref{eq-kl-form} have a very strong structure, and can be tackled using highly efficient iterative scaling algorithms. More precisely, the solution of~\eqref{eq-kl-form} can be written in scaling form as $\bgamma_{i,j} = \bK_{i,j} \, \bA_i \, \bB_j$, where the two vectors $(\bA,\bB) \in (\R_+^\Ndisc)^2$ can be computed using the following convergent iterative scheme
\eql{\label{eq-iter-sinkhorn}
	\bA \longleftarrow \frac{\Prox_{f}(\bK \bB)}{\bK \bB}
	\quad\text{and}\quad
	\bB \longleftarrow \frac{\Prox_{g}(\bK^\top \bA)}{\bK^\top \bA}, 
}
where $\tfrac{\cdot}{\cdot}$ denote entry-wise division of vectors. 
Here, we used the so-called proximal operator for the KL divergence, which reads
\eq{
	\Prox_{f}(\bU) \eqdef \argmin_{\bV \in \R_+^\Ndisc} \KL(\bV|\bU) + f(\bV).
}

For the functions~\eqref{eq-func-def} involved for the resolution of the JKO steps, the corresponding proximal operators have a particularly simple form, since
\eq{
	\Prox_{f}(\bU) = \iteratePar{\brho}{k}
	\quad\text{and}\quad
	\Prox_{g}(\bU) = (
		\Prox_{\kappa \ol{u}}(\bU_i \, e^{-\kappa \, v(x_i)}) 
	)_i
}
Note that in this last expression, $\Prox_u$ is the proximal map of a 1-D function, so either it can be computed in closed form or otherwise it can be pre-computed with high accuracy in a look-up-table. 

As explained in~\cite{2016-chizat-sinkhorn}, iterations~\eqref{eq-iter-sinkhorn} are generalization of the celebrated Sinkhorn algorithm~\cite{Sinkhorn64}. When $\veps$ is small, the naive application of $\bK$ leads to instabilities and numerical overflows. We refer to~\cite{2016-chizat-sinkhorn} for details on how to implement these formulas in a numerically stable way.

\subsection{Numerical Simulations}

\begin{table}
	\centering
	\begin{tabular}{lllll}
	\hline
	model & $u(s)$ & $p(s)$ & PDE & $\Prox_{\kappa \ol{u}}(s)$ \\
	\hline
	\hline
	heat & $s\,\log(s)-s$ & $s$ & $\partial_t \rho = \Delta \rho + \ddiv(\rho \nabla v)$ &
		$s^{1/(1+\kappa)}\,\blebvol^{\kappa/(1+\kappa)}$ \\
	\hline
	porous media & $\frac{1}{m-1} s^m$ & $s^m$ & $\partial_t \rho = \Delta \rho^m + \ddiv(\rho \nabla v)$ &
		$\blebvol \left( \frac{\LambertW(m\,(s/\blebvol)^{m-1}\,\kappa)}{m\,\kappa} \right)^{\frac{1}{(m-1)}}$ \\
	\hline
	congestion & $\begin{cases} 0 & \tn{if } s \in [0,1] \\ +\infty & \tn{else} \end{cases}$ &
		--- & --- & 
		$\max\{ \min \{ s,\blebvol\}, 0 \}$ \\
	\hline
	\end{tabular}
	\caption{Overview of numerically studied models and proximal maps. $\LambertW$ denotes the Lambert W function (or product logarithm).}
	\label{table:proxFormulas}
\end{table}

The following simulations are all performed on an equidistant Cartesian grid discretization of $[0,1]^n$ using either $\Ndisc=1024$ points (for $n=1$) or $\Ndisc=256 \times 256$ points (for $n=2$).

Numerically we study the heat equation, corresponding to $u(s)=s \cdot \log(s)-s$ and the porous media equation implied by $u(s)=\frac{1}{m-1} s^m$ for different exponents $m>1$. These are within the scope of Assumption \ref{asp:FreeEnergy}.
In addition, we give a numerical example for $u(s)=0$ if $s \in [0,1]$ and $u(s)=+\infty$ else, which can informally be seen as the $m \to \infty$ limit of the porous media equation or as implementation of a congestion constraint, where the density cannot exceed $1$. While this is not covered by our convergence analysis, the time-discrete JKO scheme is still well-defined and it is instructive to study the behaviour of such flows numerically.
The required formulas related to these models are summarized in Table \ref{table:proxFormulas}.

\begin{figure}
	\centering
	\includegraphics{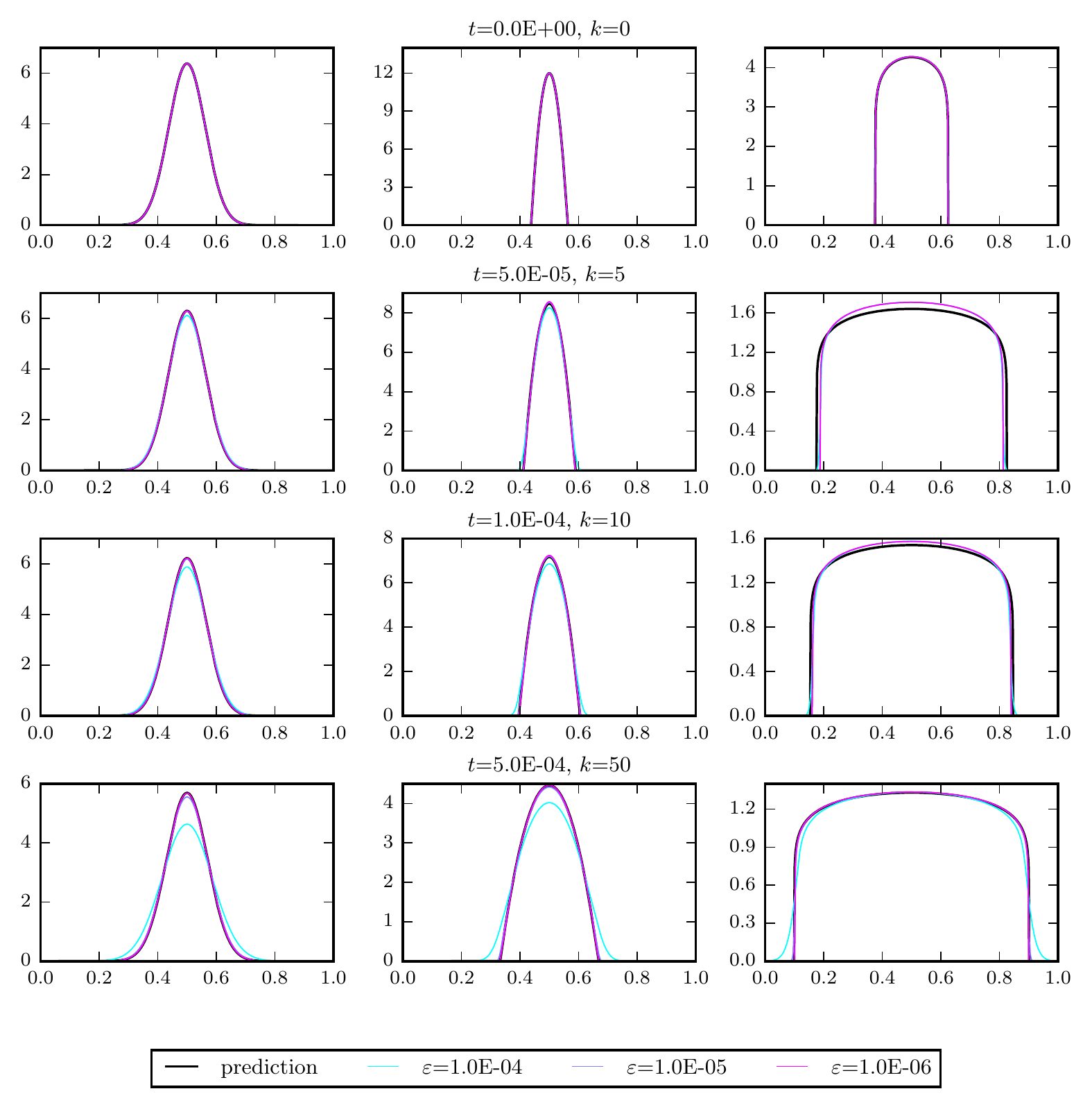}
	\caption{Comparison of different models on: heat equation (left), porous media equation $m=2$ (middle) and $m=10$ (right). The simulated domain is $[0,1]$, time step-size $\tau=10^{-5}$, the $y$-axis shows the (discrete) Lebesgue density of $\rho^{(\veps,\tau,k+1)}$ for various values of $\veps$, rescaled in each tile for better visibility.\\
	As initial profile we chose the fundamental solution of each model (see text) and compare the evolution with the analytic prediction.}
	\label{fig:model_overview}
\end{figure}

A qualitative comparison of different models on $[0,1]$ is given in Fig.~\ref{fig:model_overview}: the evolution of the entropy regularized time-discrete JKO scheme of an initial density profile is compared to the corresponding analytic solution of the PDE.
For the heat equation we consider a Gaussian profile
\begin{align*}
	\rhoSol(t,x)=\frac{1}{\sqrt{4 \pi\,(t+t_0)}} \exp\left(-\frac{(x-x_0)^2}{4\,(t+t_0)}\right)\,.
\end{align*}
For the porous media equation we chose the Barenblatt profile (also known as ZKB solution) \cite{VazquezPorousMedium2006},
\begin{align*}
	\rhoSol(t,x)=(t+t_0)^{-\alpha}\,\left(C-\beta\,(x-x_0)^2\,(t+t_0)^{-2\alpha/n} \right)_+^{\frac{1}{m-1}}
	\quad \tn{with} \quad \alpha=\frac{n}{n\,(m-1)+2}, \quad \beta=\frac{(m-1)\,\alpha}{2\,m\,n}
\end{align*}
and with a suitable normalization constant $C$. We choose some offset $t_0>0$ to avoid the Dirac singularity.
As initial density we set $\rho_0(x)=\rhoSol(0,x)$.

Overall, we observe two types of deviations between the time-discrete numerical scheme and the analytic solution:
For $\veps=10^{-4}$ there is significant blurring in the numerical scheme, introduced by the entropic smoothing. As $\veps$ decreases this effect becomes weaker and is virtually invisible for $\veps=10^{-6}$. In particular the compact support of the Barenblatt profiles is well preserved.
The other effect affects mainly the porous media equation for $m=10$. For small $t$ (or $k$), when the density is still rather concentrated, the pressure is high and the analytic solution changes quickly in time. The time-discrete scheme cannot capture these quick changes with high precision. The agreement becomes better as the solution approaches the steady state.

\begin{figure}
	\centering
	\includegraphics{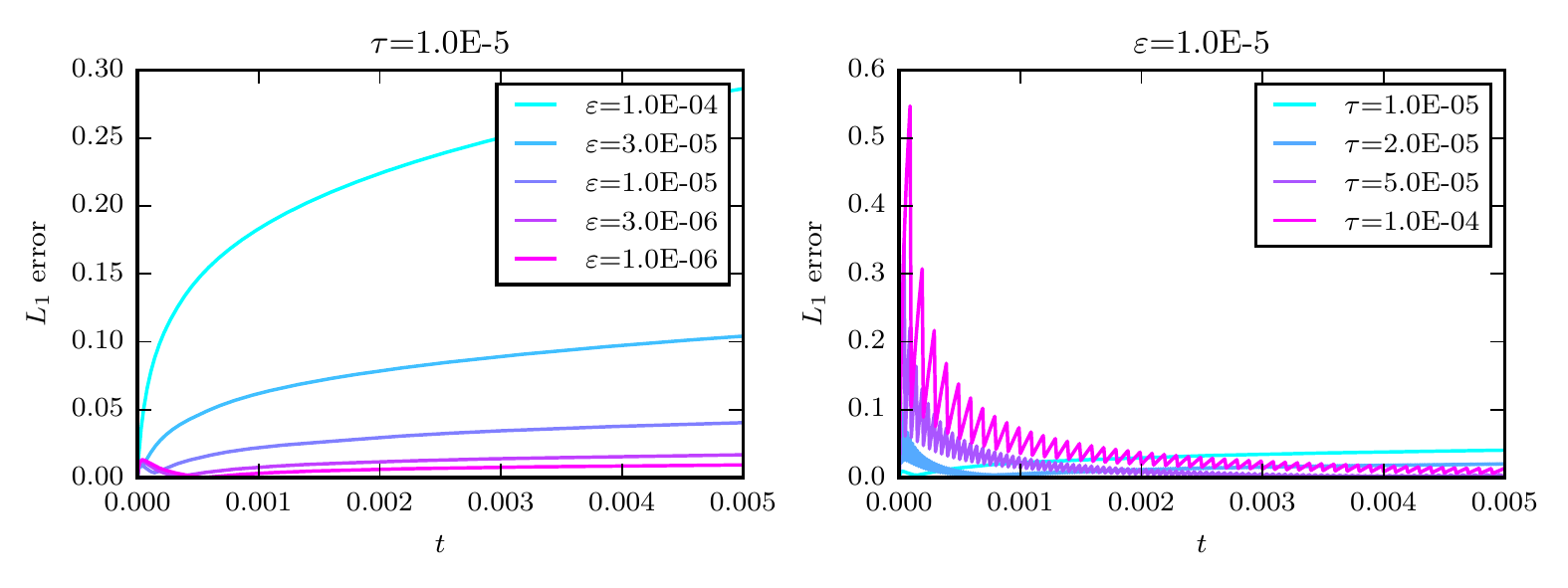}
	\caption{$L_1$ error of a time-slice of time-discrete scheme relative to exact solution, for the porous media equation, $m=2$.
	The $y$-axis shows $\int |\interp(t,x) -\rhoSol(t,x)|\,dx$, for different $\veps$ (left) and different $\tau$ (right).}
	\label{fig:time_error}
\end{figure}

A more quantitative error analysis is given in Figures \ref{fig:time_error} and \ref{fig:L1_error}.
In Fig.~\ref{fig:time_error} the $L_1$ error in space for time slices between the time-discrete and 
predicted solutions is shown for various regularization strengths $\veps$ and time step-sizes $\tau$.
As expected, for fixed $\tau$ the error decreases as $\veps \to 0$ and typically increases with $t$.
For fixed $\veps$ we observe various phenomena:
As $\tau$ increases, the error curves become zigzagged, since the time-discrete interpolation $\interp$ is constant over increasingly longer periods.
For small $t$ the error grows with $\tau$, since the scheme is increasingly unable to resolve quick changes in time (see also Fig.~\ref{fig:model_overview} and related text). As $t$ increases and the changes in time become slower, this source of error decreases (this also explains the initial decrease for small $\veps$ in the left panel).
Conversely, for larger $t$ the primary cause of error is the blur introduced by entropy smoothing. Therefore the error eventually decreases with $\tau$, as fewer time-steps are required, leading to less blur.

\begin{figure}
	\centering
	\includegraphics{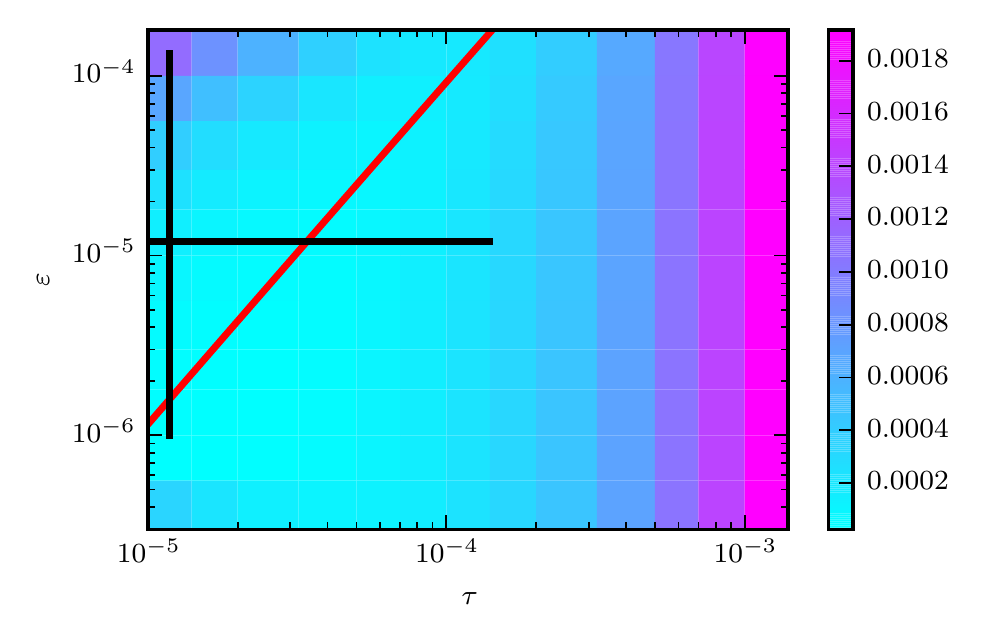}
	\caption{Total $L_1$ error over space and time of time-discrete scheme relative to exact solution, for the porous media equation, $m=2$, for different $\veps$ and different $\tau$.
	The black curves indicate the parameter ranges shown in Fig.~\ref{fig:time_error}. The red curve shows the relation $\veps = C \cdot \tau^2 \cdot |\log \tau|$, for $C=10^3$. This is the sufficient upper bound for convergence of asymptotic scaling between $\tau$ and $\veps$, as given in Sect.~\ref{sec:JKOConvergencePDE}.}
	\label{fig:L1_error}
\end{figure}

Fig.~\ref{fig:L1_error} illustrates the complete spatio-temporal $L_1$ error between the time-discrete scheme and the analytic solution for various combinations of $\veps$ and $\tau$, as well as a curve indicating the asymptotic relation between $\veps$ and $\tau$, proven sufficient for convergence in Sect.~\ref{sec:JKOConvergencePDE}.
While this bound may not be tight, clearly, $\veps$ must decrease sufficiently fast as $\tau \to 0$ for convergence. Otherwise, the blur introduced at each time-discrete step will introduce too much error.
The artifacts at the bottom left corner are due to the spatially discretized grid: for very small $\tau$ and $\veps$ the transport cost to the next grid point can be prohibitive and the gradient flow scheme `freezes'. These artifacts disappear when the scheme is run at a higher spatial resolution.

\begin{figure}
	\centering
	\includegraphics{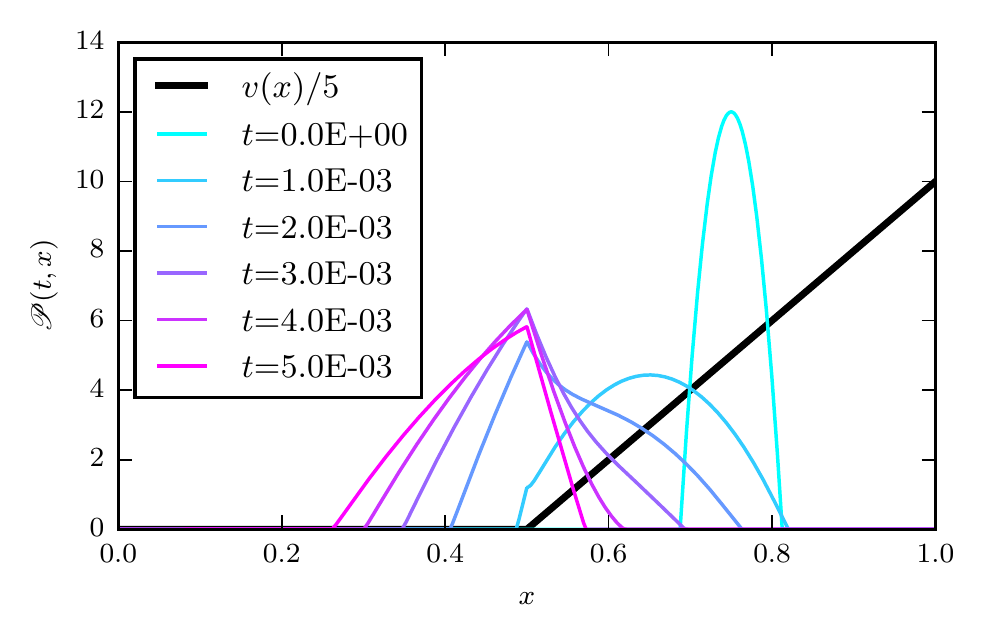}
	\caption{Porous media equation,$m=2$ with drift: $v(x)=100 \cdot \max\{0,x-0.5\}$, $\tau=10^{-5}$, $\veps=10^{-6}$.}
	\label{fig:porous_media_drift}
\end{figure}

An example of the porous media equation, $m=2$, with drift (i.e.~non-zero potential) is displayed in Fig.~\ref{fig:porous_media_drift}. The initial density first `slides down' the slope, there is some `jamming' at the kink and eventually pressure spreads the density out again. Note again, how the compact support is preserved numerically, due to small entropy regularization.

\begin{figure}
	\centering %
	\includegraphics{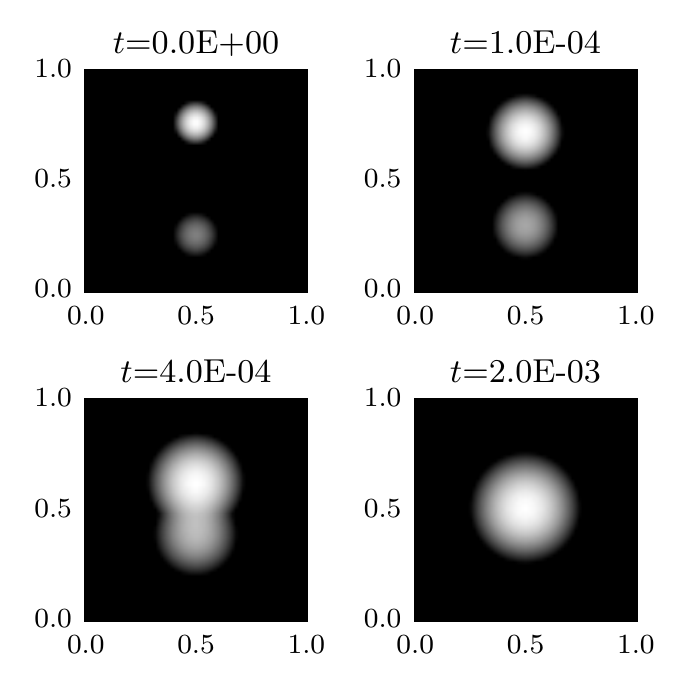} %
	\hfill %
	\includegraphics{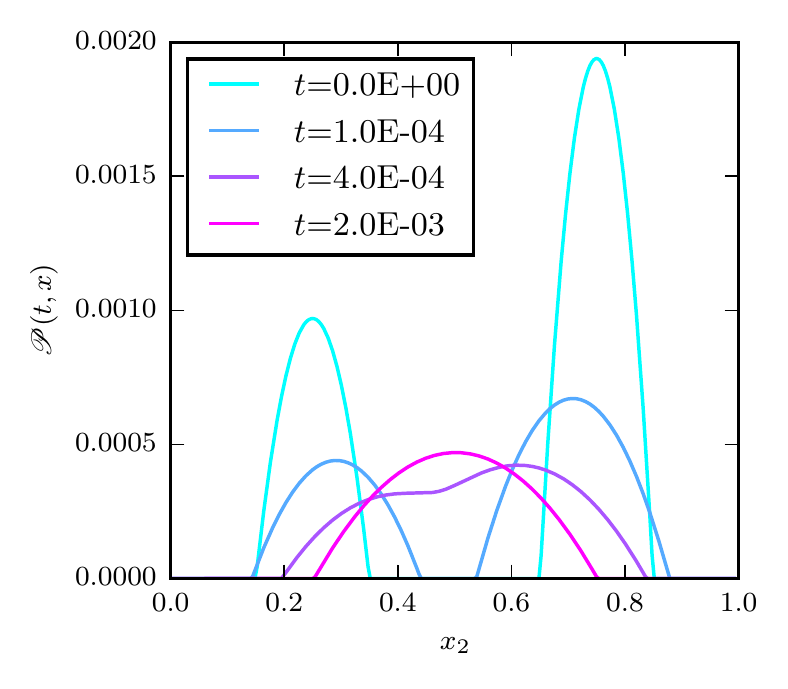} %
	\caption{Porous media equation, $m=2$, on $[0,1]^2$ in a potential $v(x)=10^3 \cdot \|x-(\frac12,\frac12)^\top\|^2$, $\tau=10^{-4}$, $\veps=2.5 \cdot 10^{-5}$. %
	Left: 2-D densities for various times. Separate gray scale for each image, for better visibility. %
	Right: Cross section of densities along the line $x_1=0.5$. %
	}
	\label{fig:2d_dens}
\end{figure}

An example in two dimensions, $n=2$, is shown in Fig.~\ref{fig:2d_dens}. Two initial blobs collide in the potential well and relax into a steady state. Note how the upper blob (with larger amplitude) expands faster, due to higher pressure.

\begin{figure}
	\centering
	\includegraphics{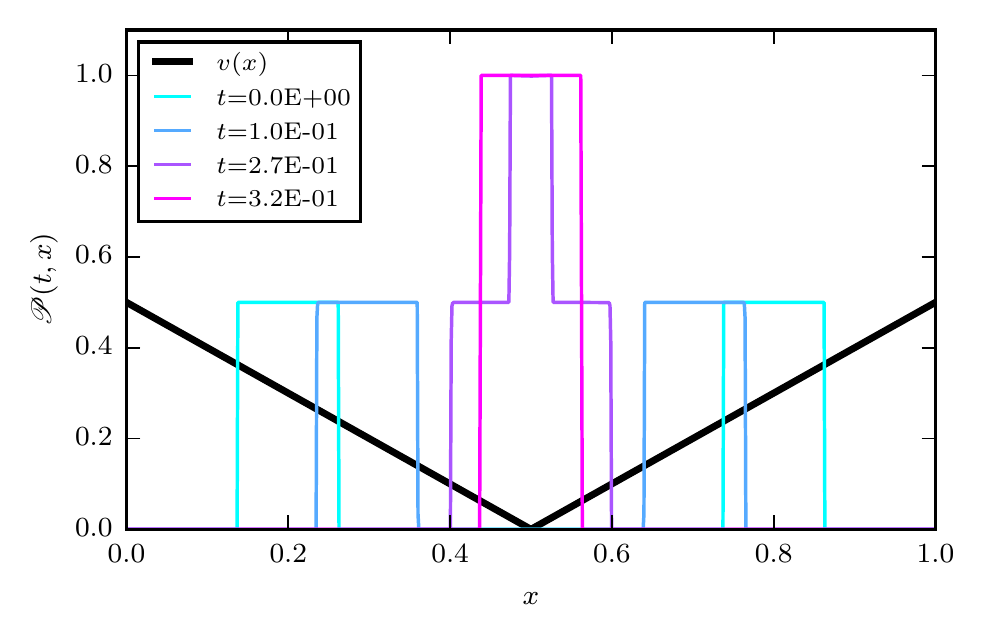}
	\caption{Time-discrete evolution for the congestion model: $u(s)=0$ if $s \in [0,1]$, $u(s)= \infty$ else. Potential $v$ as depicted. $\tau=10^{-2}$, $\veps=10^{-7}$.}
	\label{fig:congestion}
\end{figure}


Finally, we provide a numerical example of the congestion model (Fig.~\ref{fig:congestion}). Two initial blocks with densities below the threshold collide in a potential well. The congestion constraint prevents the density from collapsing into a Dirac mass.


\section*{Conclusion}
\label{sec-conclusion}

In this paper, we have presented a convergence analysis for both the Schr\"odinger problem (i.e. the entropic smoothing of optimal transport) and the entropic approximation of JKO flows. 
We showed that in the limit of a small regularization, one recovers respectively the usual solution to the Kantorovich problem and the gradient flow according to the Wasserstein metric (if the step size in time does not decay too fast with respect to the smoothing). 
The convergence of the entropic JKO scheme was illustrated with numerical examples.

\section*{Acknowledgements}

Guillaume Carlier gratefully acknowledges the support of the ANR, through the project ISOTACE (ANR-12-MONU-0013) and INRIA through the \'equipe-projet MOKAPLAN.
The work of Gabriel Peyr\'e has been supported by the European Research Council (ERC project SIGMA-Vision).
Bernhard Schmitzer is supported by a public grant overseen by the French National Research Agency (ANR) as part of the ``Investissements d'avenir'', program-reference ANR-10-LABX-0098. 
Vincent Duval gratefully acknowledges support from the CNRS (D\'efi Imag'in de la Mission pour l'Interdisciplinarit\'e, project CAVALIERI).

\appendix


\section{Useful Results}

\begin{lemma}
	\thlabel{thm:muAlphaBound}
	Let $\alpha \in (\frac{n}{n+2},1)$, (i.e. $\frac{\alpha}{1-\alpha}>\frac{n}{2}$), there is a finite constant $C>0$ (depending only on $n$ and $\alpha$) such that for all $\mu \in \probac(\R^n)$, $R \geq 0$ one has
	\begin{align*}
		\int_{\R^n \setminus B_R} (\mu(x))^\alpha\,dx \leq (1+M_{\R^n}(\mu))^\alpha
			\cdot C \cdot \left(\frac{1}{R^2+1}\right)^{\alpha-\frac{n}{2}(1-\alpha)}\,.
	\end{align*}
	Note in particular that this integral remains finite for $R=0$ and vanishes in the limit $R \rightarrow \infty$.
\end{lemma}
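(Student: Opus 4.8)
The plan is to pass, via Hölder's inequality with a carefully chosen weight, from the nonlinear quantity $\int\mu^\alpha$ to the second moment of $\mu$ times a purely deterministic tail integral, and then to estimate that tail integral explicitly in polar coordinates. The hypothesis $\alpha>\frac{n}{n+2}$ will enter exactly as the condition making the tail integral finite.

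\emph{Step 1: Hölder splitting.} Write, for $x\in\R^n$,
\begin{equation*}
  \mu(x)^\alpha = \bigl[\mu(x)\,(1+|x|^2)\bigr]^\alpha\cdot(1+|x|^2)^{-\alpha},
\end{equation*}
and apply Hölder's inequality on $\R^n\setminus B_R$ with conjugate exponents $p=1/\alpha$ and $q=1/(1-\alpha)$. Enlarging the domain of integration in the first factor and using $\int_{\R^n}\mu(x)(1+|x|^2)\,dx=1+M_{\R^n}(\mu)$ (recall $\mu$ is a probability density with finite second moment), this yields
\begin{equation*}
  \int_{\R^n\setminus B_R}\mu(x)^\alpha\,dx \;\leq\; (1+M_{\R^n}(\mu))^\alpha\left(\int_{\R^n\setminus B_R}(1+|x|^2)^{-\frac{\alpha}{1-\alpha}}\,dx\right)^{1-\alpha}.
\end{equation*}
The weight $(1+|x|^2)^1$ is chosen precisely so that the first factor becomes the desired moment term.

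\emph{Step 2: the tail integral.} Set $s\eqdef\frac{\alpha}{1-\alpha}$. The condition $\alpha>\frac{n}{n+2}$ is equivalent to $s>\frac n2$, which is exactly the threshold at which $(1+|x|^2)^{-s}$ becomes integrable over $\R^n$; this also accounts for the finiteness of the integral in the statement when $R=0$. Passing to polar coordinates and substituting $u=r^2$,
\begin{equation*}
  \int_{\R^n\setminus B_R}(1+|x|^2)^{-s}\,dx = \sigma_{n-1}\int_R^\infty\frac{r^{n-1}}{(1+r^2)^s}\,dr = \frac{\sigma_{n-1}}{2}\int_{R^2}^\infty\frac{u^{\frac n2-1}}{(1+u)^s}\,du,
\end{equation*}
where $\sigma_{n-1}$ is the surface area of the unit sphere in $\R^n$. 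Bounding $u^{\frac n2-1}\leq(1+u)^{\frac n2-1}$ and integrating the power $(1+u)^{\frac n2-1-s}$, which is integrable near $+\infty$ because $s>\frac n2$, gives
\begin{equation*}
  \int_{\R^n\setminus B_R}(1+|x|^2)^{-s}\,dx \;\leq\; \frac{\sigma_{n-1}}{2\,(s-\frac n2)}\,(1+R^2)^{\frac n2-s}.
\end{equation*}

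\emph{Step 3: assembling.} Raising this bound to the power $1-\alpha$ and inserting it into Step 1, and using the identities $(1-\alpha)(s-\tfrac n2)=\alpha-\tfrac n2(1-\alpha)$ and hence $(1+R^2)^{(1-\alpha)(\frac n2-s)}=\bigl(\tfrac1{R^2+1}\bigr)^{\alpha-\frac n2(1-\alpha)}$, one obtains the claimed inequality with $C=\bigl(\sigma_{n-1}/(2(s-\tfrac n2))\bigr)^{1-\alpha}$, which depends only on $n$ and $\alpha$ and is finite precisely because $s-\tfrac n2=\tfrac{(n+2)\alpha-n}{2(1-\alpha)}>0$. There is no serious obstacle here; the only points demanding care are choosing the Hölder weight with exponent exactly $1$ so that the leftover factor is $1+M_{\R^n}(\mu)$, and checking the exponent bookkeeping so that $(1-\alpha)(s-\tfrac n2)$ reproduces exactly the exponent $\alpha-\tfrac n2(1-\alpha)$ appearing in the statement.
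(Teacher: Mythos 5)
Your Hölder split in Step~1 is exactly the paper's approach (following~\cite[Prop.~4.1]{JKO1998}): same weight $(1+|x|^2)^{\pm\alpha}$, same conjugate exponents, and you correctly note that the first factor requires enlarging the domain (the paper's display carelessly writes an equality there). You go further than the paper, which merely asserts the tail-integral bound, by attempting an explicit computation in Step~2 — and there is a genuine gap in that computation. The inequality $u^{\frac n2-1}\leq(1+u)^{\frac n2-1}$ holds only when $\frac n2-1\geq 0$, i.e.\ when $n\geq 2$; for $n=1$ the exponent is $-\frac12$, the inequality reverses, and your chain of estimates (together with your explicit constant $C$) breaks down. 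The statement, however, is claimed for all $n\geq 1$.

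The fix is straightforward. Split the $u$-integral at $\max(R^2,1)$. On $u\geq\max(R^2,1)$ use $u\geq\tfrac{1+u}{2}$, which gives
\begin{equation*}
u^{\frac n2-1}\leq 2^{\,|\frac n2-1|}\,(1+u)^{\frac n2-1},
\end{equation*}
valid for all $n\geq 1$, and integrate as you did to get a bound $\lesssim(1+R^2)^{\frac n2-s}$ (for $R\geq 1$; for $R<1$, $(1+\max(R^2,1))^{\frac n2-s}=2^{\frac n2-s}$ is just a constant). On $R^2\leq u\leq 1$ (present only when $R<1$), the integrand $u^{\frac n2-1}(1+u)^{-s}$ is integrable near $0$ since $\frac n2-1>-1$, so this piece is bounded by a finite constant depending only on $n$ and $s$; because $(1+R^2)^{\frac n2-s}\geq 2^{\frac n2-s}$ for $R\leq 1$, that constant can be absorbed into $C$. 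Everything else — the choice of weight with exponent exactly $1$, and the exponent identity $(1-\alpha)(s-\tfrac n2)=\alpha-\tfrac n2(1-\alpha)$ — is correct.
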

\begin{proof}
	We repeat the arguments in \cite[Prop.~4.1]{JKO1998}. By H\"older's inequality, we have:
	\begin{align*}
		\int_{\R^n\setminus B_R} (\mu(x))^\alpha\,dx &= \int_{\R^n\setminus B_R} (\mu(x))^\alpha (1+\abs{x}^2)^\alpha \frac{1}{(1+\abs{x}^2)^\alpha }dx\\
		& \leq \left(\int_{\R^n\setminus B_R} \mu(x)\, (1+\abs{x}^2)\,dx\right)^\alpha
			\left(\int_{\R^n\setminus B_R}
			\left(\frac{1}{1+\abs{x}^2} \right)^{\frac{\alpha}{1-\alpha}} dx \right)^{1-\alpha}\\
		& = (1+M_{\R^n}(\mu))^\alpha \left(\int_{\R^n\setminus B_R}
			\left(\frac{1}{1+\abs{x}^2} \right)^{\frac{\alpha}{1-\alpha}} dx \right)^{1-\alpha}\\
		&  \leq (1+M_{\R^n}(\mu))^\alpha \cdot C \cdot \left(\frac{1}{1+R^2}\right)^{\alpha-\frac{n}{2}(1-\alpha)} \qedhere
	\end{align*}
\end{proof}

\begin{corollary}[Entropy Bound, \protect{\cite[Prop.~4.1]{JKO1998}}]
\thlabel{thm:EntropyMomentBound}
There exists a constant $C>0$ and an exponent $0 < \alpha < 1$ such that for every $\mu \in \probac(\R^n)$ one has
	\begin{align}
		H_{\R^n}(\mu) & \geq -C\,(M_{\R^n}(\mu)+1)^\alpha\,, & \HRkneg{n}(\mu) & \leq C\,(M_{\R^n}(\mu)+1)^\alpha\,.
		\label{eq:EntropyMomentBound}
	\end{align}
\end{corollary}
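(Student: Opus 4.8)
The plan is to obtain both inequalities in \eqref{eq:EntropyMomentBound} as easy consequences of the already-established \thref{thm:muAlphaBound}, which bounds $\int_{\R^n}(\mu(x))^\alpha\,dx$ by a multiple of $(1+M_{\R^n}(\mu))^\alpha$ once $\alpha$ is chosen close enough to $1$. Accordingly, I fix once and for all an exponent $\alpha\in(\tfrac{n}{n+2},1)$ — this is the exponent that will appear in the statement, and the restriction $\alpha>\tfrac{n}{n+2}$ is exactly what makes $(1+|x|^2)^{-\alpha/(1-\alpha)}$ integrable on $\R^n$, which is the fact used in \thref{thm:muAlphaBound}.

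First I would record the elementary one–variable estimate. The function $s\mapsto -s^{1-\alpha}\log s$ is continuous and bounded on $(0,1]$: it vanishes as $s\to 0^+$ and at $s=1$, and attains its maximum $\tfrac{1}{e(1-\alpha)}$ at $s=e^{-1/(1-\alpha)}$. Hence there is a constant $c_\alpha>0$ with $-s\log s\le c_\alpha\,s^\alpha$ for all $s\in[0,1]$ (with the convention $0\log 0=0$). Since $s\log s\ge 0$ for $s\ge 1$, we have $\HRkneg{n}(\mu)=\int_{\{\mu<1\}}\bigl(-\mu(x)\log\mu(x)\bigr)\,dx$, so that
\begin{equation*}
	\HRkneg{n}(\mu)\;\le\; c_\alpha\int_{\{\mu<1\}}(\mu(x))^\alpha\,dx\;\le\; c_\alpha\int_{\R^n}(\mu(x))^\alpha\,dx .
\end{equation*}
Applying \thref{thm:muAlphaBound} with $R=0$ — in which case the prefactor $\bigl(\tfrac{1}{R^2+1}\bigr)^{\alpha-\frac n2(1-\alpha)}$ equals $1$ — bounds the last integral by $C\,(1+M_{\R^n}(\mu))^\alpha$. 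After absorbing $c_\alpha$ into the constant, this is precisely the second inequality of \eqref{eq:EntropyMomentBound}.

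For the first inequality I would simply split $H_{\R^n}(\mu)=\HRkpos{n}(\mu)-\HRkneg{n}(\mu)$; since $\HRkpos{n}(\mu)\ge 0$ this gives $H_{\R^n}(\mu)\ge-\HRkneg{n}(\mu)\ge-C\,(1+M_{\R^n}(\mu))^\alpha$ by the bound just obtained. The case $\HRkpos{n}(\mu)=+\infty$ is trivial, and $\mu\notin\probacAll(\R^n)$ does not occur because $\mu\in\probac(\R^n)$. I do not expect any genuine obstacle here: all the substance is in \thref{thm:muAlphaBound}. The only point that truly requires that lemma rather than a soft argument is that $\int_{\R^n}(\mu)^\alpha$ is, a priori, finite only because of the second-moment bound, and \thref{thm:muAlphaBound} is designed to supply simultaneously this finiteness and the quantitative $(1+M_{\R^n}(\mu))^\alpha$ dependence.
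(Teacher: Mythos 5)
Your argument is correct and follows essentially the same route as the paper: both hinge on the elementary pointwise bound $-s\log s\le c_\alpha\,s^\alpha$ together with \thref{thm:muAlphaBound} (taken at $R=0$) to control $\int(\mu)^\alpha$ by $(1+M_{\R^n}(\mu))^\alpha$. Your write-up just spells out the decomposition $H=\HRkpos{n}-\HRkneg{n}$ and the restriction of the one-variable bound to $[0,1]$ more explicitly than the paper does.
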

\begin{proof}
	This follows directly from \thref{thm:muAlphaBound} and the bound $z\,\log(z)>-C \cdot z^\alpha$ for $z \geq 0$ for any $\alpha \in (0,1)$ and an appropriate $C>0$.
\end{proof}

\begin{theorem}[\protect{\cite[Thm.~2.34]{AFP00}}]
\thlabel{thm:ambroLSC}
Let $u : [0,\infty) \rightarrow [0,\infty]$ be convex, lower semi-continuous and super-linear: $\lim_{r \rightarrow \infty} u(r)/r = \infty$.
Let $(\nu_k)_{k \in \N}$ be a sequence in $\probacAll(\R^n)$, narrowly converging to some $\nu \in \probAll(\R^n)$. Consider $U: \probAll(\R^n) \rightarrow \R$, given by
\begin{align*}
	U(\mu) = \begin{cases}
		\int_{\R^n} u(\mu(x))\,dx & \tn{if } \mu \in \probacAll(\R^n)\,, \\
		+ \infty & \tn{otherwise.}
		\end{cases}
\end{align*}
Then
\begin{align*}
	U(\nu) \leq \liminf_{k \rightarrow +\infty} U(\nu_k)\,.
\end{align*}
\end{theorem}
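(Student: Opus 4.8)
The plan is to prove the statement through a weak-$L^1$ compactness argument rather than through the (more computational) convex-duality route. If $\liminf_{k} U(\nu_k) = +\infty$ there is nothing to prove, so I would pass to a (not relabelled) subsequence along which $U(\nu_k)$ converges to $S \eqdef \liminf_k U(\nu_k) < \infty$; in particular $\sup_k \int_{\R^n} u(\nu_k(x))\,dx \le S' < \infty$ for some finite $S'$.

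The first key step is to show that $\{\nu_k\}_{k\in\N}$ is relatively weakly sequentially compact in $L^1(\R^n)$, via the Dunford--Pettis theorem, which on the infinite-measure space $(\R^n,dx)$ requires two things. First, \emph{uniform integrability}: since $u$ is superlinear, for any $\veps>0$ one can choose $c>0$ so large that $u(t)\ge (S'/\veps)\,t$ for all $t\ge c$, and then for every Borel $A\subset\R^n$
\[
  \int_A \nu_k(x)\,dx \le c\,|A| + \int_{\{\nu_k>c\}}\nu_k(x)\,dx \le c\,|A| + \frac{\veps}{S'}\int_{\R^n} u(\nu_k(x))\,dx \le c\,|A| + \veps,
\]
using $u\ge 0$ in the last integral; choosing $|A|$ small then makes the left-hand side arbitrarily small, uniformly in $k$. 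Second, \emph{no mass escaping to infinity}: the narrowly convergent sequence $\{\nu_k\}$ is tight (Prokhorov, cf.\ Def.~\ref{def:Topologies}), so for each $\veps>0$ there is a compact $K\subset\R^n$ of finite Lebesgue measure with $\nu_k(\R^n\setminus K)<\veps$ for all $k$. Together with $\|\nu_k\|_{L^1}=1$, Dunford--Pettis yields a further subsequence with $\nu_k \rightharpoonup g$ weakly in $L^1(\R^n)$ for some $g\ge 0$.

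Next I would identify the limit: since $C_b(\R^n)\subset L^\infty(\R^n)=(L^1(\R^n))^{*}$, weak $L^1$-convergence gives $\int \psi\,\nu_k\,dx \to \int \psi\,g\,dx$ for every $\psi\in C_b(\R^n)$, while narrow convergence gives $\int\psi\,d\nu_k\to\int\psi\,d\nu$; hence $\nu = g\,dx$, so $\nu\in\probacAll(\R^n)$ and $U(\nu)=\int_{\R^n}u(\nu(x))\,dx$. Finally, the functional $L^1_+(\R^n)\ni f\mapsto \int_{\R^n}u(f(x))\,dx$ is convex (because $u$ is convex) and strongly lower semicontinuous (by Fatou's lemma, using $u\ge 0$ and lower semicontinuity of $u$ along an a.e.-convergent subsequence extracted from a strongly convergent one); a convex, strongly l.s.c.\ functional on $L^1$ is weakly l.s.c.\ (Mazur). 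Applying this along the subsequence above,
\[
  U(\nu)=\int_{\R^n}u(g(x))\,dx \le \liminf_{k\to\infty}\int_{\R^n}u(\nu_k(x))\,dx = S = \liminf_{k\to\infty}U(\nu_k).
\]

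The step I expect to be the main obstacle is assembling the correct form of the Dunford--Pettis criterion on the infinite-measure space $(\R^n, dx)$: one genuinely needs \emph{both} the uniform-integrability estimate coming from superlinearity of $u$ \emph{and} the tightness-at-infinity coming from narrow convergence, and neither alone suffices. An alternative route avoids compactness entirely: with $u^{*}(s)=\sup_{t\ge 0}(st-u(t))$ (finite for all $s\in\R$ by superlinearity, with $u=u^{**}$ by convexity and lower semicontinuity), one establishes the duality formula $U(\mu)=\sup\{\int\phi\,d\mu-\int u^{*}(\phi(x))\,dx:\phi\in C_c(\R^n)\}$ valid for all $\mu\in\probAll(\R^n)$ — the inequality $\le$ being Young's inequality and $\ge$ requiring a measurable-selection-plus-mollification argument — and then notes that the right-hand side is a supremum of narrowly continuous functionals of $\mu$, hence narrowly l.s.c.; this gives the claim at once but shifts the technical burden to the $\ge$ part of the duality formula.
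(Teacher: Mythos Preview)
Your argument is correct. The Dunford--Pettis step is assembled properly: superlinearity of $u$ together with the bound $\sup_k\int u(\nu_k)<\infty$ gives local uniform integrability, tightness of the narrowly convergent sequence $(\nu_k)$ handles the mass at infinity, and $\|\nu_k\|_{L^1}=1$ gives boundedness; together these yield relative weak compactness in $L^1(\R^n)$. The identification of the weak-$L^1$ limit with the narrow limit via testing against $C_b\subset L^\infty$ is fine, and the final inequality follows from the standard ``convex $+$ strongly l.s.c.\ $\Rightarrow$ weakly l.s.c.'' argument.

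The paper's own proof, however, does not argue anything: it simply records that the statement is a direct instance of \cite[Thm.~2.34]{AFP00} (a general weak-$*$ lower-semicontinuity result for convex integral functionals on Radon measures), once one observes that Borel probability measures on $\R^n$ are Radon and that narrow convergence implies the local weak-$*$ convergence used in that reference. So the difference is one of scope: the paper defers entirely to the cited theorem, while you supply a self-contained proof. Your route makes explicit where each hypothesis (superlinearity, $u\ge 0$, narrow convergence) enters, at the price of invoking Dunford--Pettis and Mazur; the paper's route is a one-line citation but leaves the reader to verify the translation between the two frameworks. Your duality alternative in the last paragraph is in fact closer in spirit to how such results are typically established in \cite{AFP00}, so that second sketch aligns more with what the paper is implicitly invoking.
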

\begin{proof}
	This is a direct application of \cite[Thm.~2.34]{AFP00}, taking the Lebesgue measure as reference measure, using that Borel probability measures on $\R^n$ are Radon measures and that narrow convergence implies local weak$^\ast$ convergence in the sense of \cite{AFP00}.
\end{proof}

We now provide an extension to a somewhat larger class of integral functions $u$:
\begin{corollary}
  \thlabel{cor:fctLSC}
  Let $u:[0,\infty) \rightarrow (-\infty,\infty]$ be a convex, lower semi-continuous function with super-linear growth at infinity, such that $u(0)=0$ and there exist constants $C>0$,  $\alpha\in(\frac{n}{n+2} ,1)$ with 
\begin{align}
	\label{eq:coroFLBound}
	u(s) \geq -C\,s^\alpha \quad \tn{for all } s \in [0,\infty)\,.
\end{align}
Let $U$ be built from $u$ as in \thref{thm:ambroLSC}. Let $(\nu_k)_{k \in \N}$ be a sequence in $\prob(\R^n)$, narrowly converging to some $\nu \in \prob(\R^n)$ and assume there is some $C_M<\infty$ such that $M(\nu_k)$, $M(\nu) < C_M$.

\noindent Then,
\begin{align*}
  U(\nu)\leq \liminf_{k\to+\infty} U(\nu_k).
\end{align*}

\end{corollary}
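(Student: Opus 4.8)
The plan is to reduce \thref{cor:fctLSC} to \thref{thm:ambroLSC} by a localization argument that isolates the (non-convex) defect $-C\,s^\alpha$ to the tail, where it is harmless by \thref{thm:muAlphaBound}, while on bounded regions it is absorbed into an affine lower bound. First I would note that $U$ is well defined with values in $(-\infty,+\infty]$ under the standing second-moment bound: by \eqref{eq:coroFLBound} and \thref{thm:muAlphaBound} with $R=0$ one has $\int_{\R^n}\max\{-u(\nu_k(x)),0\}\,dx\le C\int_{\R^n}\nu_k^\alpha\,dx\le C\,(1+M(\nu_k))^\alpha$, which is finite and (since $M(\nu_k)\le C_M$) uniformly bounded in $k$; the same holds for $\nu$. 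If $\liminf_k U(\nu_k)=+\infty$ there is nothing to prove, so we may assume this $\liminf$ is finite.

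\textbf{Splitting and the two estimates.} Fix $R>0$ with $\nu(\partial B_R)=0$ (all but countably many $R$) and write $U(\nu_k)=\int_{B_R}u(\nu_k)\,dx+\int_{\R^n\setminus B_R}u(\nu_k)\,dx$. For the tail, \eqref{eq:coroFLBound}, \thref{thm:muAlphaBound} and $M(\nu_k)\le C_M$ give $\int_{\R^n\setminus B_R}u(\nu_k)\,dx\ge -\varepsilon(R)$ with $\varepsilon(R)$ independent of $k$ and $\varepsilon(R)\to 0$ as $R\to\infty$; the same bound holds with $\nu$ in place of $\nu_k$. For the part on $B_R$, the point is that on a bounded domain the concave defect can be traded for an affine one: using $s^\alpha\le 1+s$, the function $\hat u(s)\eqdef u(s)+C(1+s)$ is non-negative, convex, lower semicontinuous and super-linear, hence \thref{thm:ambroLSC} applies to it. To fit that statement (probability measures on all of $\R^n$), I would fix a probability density $\mu_0\in\probac(\R^n)$ with bounded density and compact support disjoint from $\overline{B_R}$, and set $\sigma_k\eqdef \mathbf{1}_{B_R}\nu_k+(1-\nu_k(B_R))\,\mu_0$ and $\sigma\eqdef \mathbf{1}_{B_R}\nu+(1-\nu(B_R))\,\mu_0$. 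Since $\nu(\partial B_R)=0$, standard Portmanteau-type arguments give $\nu_k(B_R)\to\nu(B_R)$ and $\mathbf{1}_{B_R}\nu_k\to\mathbf{1}_{B_R}\nu$ narrowly, hence $\sigma_k\to\sigma$ narrowly; continuity of $\hat u$ and dominated convergence give $\int\hat u\big((1-\nu_k(B_R))\mu_0\big)\,dx\to\int\hat u\big((1-\nu(B_R))\mu_0\big)\,dx$. Applying \thref{thm:ambroLSC} to $\hat u$ and $(\sigma_k)_k$ and subtracting the convergent $\mu_0$-contribution yields $\int_{B_R}\hat u(\nu)\,dx\le\liminf_k\int_{B_R}\hat u(\nu_k)\,dx$; since $\int_{B_R}\hat u(\nu_k)\,dx=\int_{B_R}u(\nu_k)\,dx+C|B_R|+C\,\nu_k(B_R)$ with $\nu_k(B_R)\to\nu(B_R)$, the extra constants cancel and we obtain $\int_{B_R}u(\nu)\,dx\le\liminf_k\int_{B_R}u(\nu_k)\,dx$.

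\textbf{Conclusion.} Combining the two pieces, for every admissible $R$, $\liminf_k U(\nu_k)\ge\liminf_k\int_{B_R}u(\nu_k)\,dx+\liminf_k\int_{\R^n\setminus B_R}u(\nu_k)\,dx\ge\int_{B_R}u(\nu)\,dx-\varepsilon(R)$. Letting $R\to\infty$ through admissible radii: if $U(\nu)<+\infty$ then $u(\nu)\in L^1(\R^n)$, so $\int_{B_R}u(\nu)\,dx\to U(\nu)$ and $\varepsilon(R)\to 0$, which gives the claim; if $U(\nu)=+\infty$, monotone convergence on the positive part of $u(\nu)$ forces $\int_{B_R}u(\nu)\,dx\to+\infty$, so the inequality holds trivially. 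I expect the main obstacle to be exactly the second step: one must transfer the lower semicontinuity of \thref{thm:ambroLSC} (stated for \emph{non-negative} integrands and \emph{probability} measures on $\R^n$) to the localized, affine-shifted functional, and verify that the auxiliary $\mu_0$-terms converge so that they can be cancelled cleanly; the tail estimate and the final limit in $R$ are then routine consequences of \thref{thm:muAlphaBound}.
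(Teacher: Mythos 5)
The localization idea has the right shape, but the specific affine shift you propose does not survive the passage to integrals over the infinite-volume domain $\R^n$, and this is a genuine gap. With $\hat u(s)=u(s)+C(1+s)$ you get $\hat u(0)=C>0$, whereas the padded densities $\sigma_k$ vanish outside the bounded set $B_R\cup\spt\mu_0$. Hence $\hat u(\sigma_k(x))=C$ on a set of infinite Lebesgue measure, so $\int_{\R^n}\hat u(\sigma_k)\,dx=+\infty$ for every $k$, and likewise $\int_{\R^n}\hat u(\sigma)\,dx=+\infty$. The appeal to \thref{thm:ambroLSC} therefore yields only the vacuous inequality $+\infty\le+\infty$, and the step ``subtracting the convergent $\mu_0$-contribution'' is an $\infty-\infty$ cancellation that is not legitimate (the $\mu_0$-contribution, interpreted as $\int_{\R^n}\hat u((1-\nu_k(B_R))\mu_0)\,dx$, is itself $+\infty$ for the same reason). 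Nor can you drop the additive constant and shift by a linear term $ms$ alone, which would restore $\hat u(0)=0$: the hypotheses allow $u'(0^+)=-\infty$. For instance $u(s)=s^2-s^\alpha$ is convex, super-linear, satisfies $u(0)=0$ and \eqref{eq:coroFLBound}, yet no finite $m$ makes $u(s)+ms\ge0$ on $[0,\infty)$.

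The paper's proof avoids this by decomposing $u=u^+-u^-$ (both vanish at $0$), using the tail estimate from \thref{thm:muAlphaBound} only on $u^-$ outside $B_R$, and invoking lower semicontinuity separately for $\int_{B_R}u(\nu_k)\,dx$ and $\int_{\R^n\setminus B_R}u^+(\nu_k)\,dx$. The additive-constant trick is applied only on $B_R$, where it is harmless because $B_R$ has finite volume. Note that this implicitly uses not \thref{thm:ambroLSC} as stated (over $\R^n$ and for probability measures), but the underlying \cite[Thm.~2.34]{AFP00} from which it is derived, which is valid for arbitrary open domains and Radon measures and hence localizes to $B_R$. If you want to reduce strictly to \thref{thm:ambroLSC} via the $\mu_0$-padding, the integrand must vanish at $0$; using $u^+$ instead of $\hat u$ does give lower semicontinuity of $\int_{B_R}u^+(\nu_k)\,dx$, but then the negative part $\int_{B_R}u^-(\nu_k)\,dx$ on the ball is left uncontrolled by your tail estimate, and closing that gap requires an additional argument.
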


\begin{proof}
	We write $u^+=\max\{u,0\}$, $u^-=\max\{-u,0\}$ so that $u=u^+-u^-$. In particular, $0\leq u^-(s) \leq C\,s^\alpha$.
	
	Then, for some $R>0$, $\mu \in \probac(\R^n)$ we define
	\begin{align*}
		U|_{B_R}(\mu) & = \int_{B_R} u(\mu(x))\,dx\,,\\
		U^+|_{\R^n \setminus B_R}(\mu) & = \int_{\R^n \setminus B_R} u^+(\mu(x))\,dx\,,\\
		U^-|_{\R^n \setminus B_R}(\mu) & = \int_{\R^n \setminus B_R} u^-(\mu(x))\,dx\,.
	\end{align*}
	and $U|_{B_R}(\mu)=\infty$, $U^+|_{\R^n \setminus B_R}(\mu)=\infty$ when $\mu$ is singular w.r.t.~the Lebesgue measure. For $U^-|_{\R^n \setminus B_R}(\mu)$ one would simply integrate over the non-singular part of $\mu$.
	
	From \thref{thm:muAlphaBound} we conclude $0 \leq U^-|_{\R^n \setminus B_R}(\mu) \leq (1 + M(\mu))^\alpha \cdot g(R)$ for a non-negative function $g(R)$ that vanishes in the limit $R \rightarrow \infty$. Thus, for $M(\mu) < \infty$ we have the decomposition
	\begin{align*}
		U(\mu) & = U|_{B_R}(\mu) + U^+|_{\R^n \setminus B_R}(\mu) - U^-|_{\R^n \setminus B_R}(\mu)\,.
	\end{align*}
	\thref{thm:ambroLSC} can be applied to the first two terms. For $U|_{B_R}(\mu)$ note that $u$ is bounded from below and $B_R$ has finite volume. Thus, we can temporarily add a constant to make it non-negative.
	
	We then find for all $0 < R < \infty$:
	\begin{align*}
		\liminf_{k \rightarrow \infty} U(\nu_k) & \geq 
			\liminf_{k \rightarrow \infty} U|_{B_R}(\nu_k) + 
			\liminf_{k \rightarrow \infty} U^+|_{\R^n \setminus B_R}(\nu_k) - (1 + M(\nu_k))^\alpha \cdot g(R)\\
			& \geq U|_{B_R}(\nu) + U^+|_{\R^n \setminus B_R}(\nu) -(1 + C_M)^\alpha \cdot g(R) \\
			& = U(\nu) + U^-|_{\R^n \setminus B_R}(\nu) - (1 + C_M)^\alpha \cdot g(R) \\
		& \geq U(\nu) - (1 + M_C)^\alpha \cdot g(R)
	\end{align*}
	The last term can be made arbitrarily small by increasing $R$, thus the proof is complete.
\end{proof}

\bibliographystyle{siam}
\bibliography{references}

\end{document}